\documentclass[11pt,reqno]{amsart}
\usepackage{latexsym,amsmath,amssymb,mathrsfs,xcolor,amsthm}
\usepackage{enumerate}
\usepackage{graphicx}
\usepackage{units}
\usepackage{comment}
\usepackage{ esint }
\numberwithin{equation}{section}
\usepackage{url}

plus 6pt minus 12pt
\newtheorem{lemma}{Lemma}[section]

\renewcommand{\d}{\,{\rm d}}

\newtheorem{theorem}{Theorem}[section]

\title[An Approximate Version of Vu's Theorem]{An Approximate Version of Vu's Theorem on Economical Subbases For Non-Integer Exponents}
\author{Ataleshvara Bhargava}

\begin{document}
\begin{abstract} We prove an approximate analog of Vu's Theorem on economical bases of $k$th powers for non-integer powers.
Fix a non-integer $\theta > 2$ and real $\tau > 0$. Let $s \geq \theta^2+9\theta^{3/2}+2 $ if $\theta > 3$ and $s \geq (\lfloor 2\theta \rfloor+1)(\lfloor 2\theta \rfloor+2)+1$ if $2 < \theta < 3$. We show the existence of a set $\mathfrak{X} \subseteq \mathbb{N}$ such that the number $R_{\mathfrak{X},s,\theta,\tau}(\Lambda)$ of integer solutions $(x_1,\ldots,x_s) \in \mathfrak{X}^s$ to the equation 
\begin{align*} 
|x_1^{\theta} +\cdots+ x_s^{\theta} - \Lambda| < \tau 
\end{align*}
satisfies $R_{\mathfrak{X},s,\theta,\tau}(\Lambda) \asymp \tau \log(\Lambda)$
for all sufficiently large real $\Lambda > 0$.
\vspace{-30pt}
\end{abstract}

\maketitle

\everymath{\displaystyle} 

\section{Introduction} We prove an approximate version of Vu's thin basis theorem for non-integer exponents. We will need fewer variables than previous methods would require, by virtue of a new mean value estimate from \cite{BharMVE}. In \cite{VuRef}, Vu showed the existence subbases of the $k$th powers which form an economical asymptotic subbasis of the positive integers. Our objective here is to prove an approximative analog of this result with the exponent $k$ replaced with a non-integer $\theta > 2$. For a non-integer $\theta > 2$, let $\chi =1$ if $2 < \theta < 3$ and let $\chi = 2$ if $\theta > 3$. Define $s_0$ by setting 
\[ s_0 = s_0(\theta) =\begin{cases} 
    (\lfloor 2\theta \rfloor+1)(\lfloor 2\theta \rfloor +2) &  \text{ if } 2 < \theta < 3, \\
    \theta^2+9\theta^{3/2} & \text{ if } \theta > 3. \\
\end{cases} 
\]
For real $\delta > 0$, integer $s \geq 1$, positive real $\Lambda$, and a set $\mathfrak{X} \subseteq \mathbb{N}$, let $R_{\mathfrak{X}, s,\theta,\delta}(\Lambda)$ denote the number of solutions $(x_1,\ldots,x_s) \in \mathfrak{X}^s$ 
to the Diophantine inequality 
\begin{align} \label{ineq:main}
|x_1^{\theta}+\cdots+x_s^{\theta}-\Lambda| < \delta. 
\end{align}
\begin{theorem} \label{thm:main} 
Fix $\tau > 0$, a non-integer $\theta > 2$ and integer $s \geq s_0+\chi$. There exists $\mathfrak{X} \subseteq \mathbb{N}$ such that  $R_{\mathfrak{X},s,\theta,\tau}(N) \asymp \tau \log(\Lambda)$ for all sufficiently large real $\Lambda > 0$. 
\end{theorem}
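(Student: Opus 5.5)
We will follow Vu's probabilistic strategy, adapted to real exponents. Choose $\mathfrak{X}$ at random, including each $n\in\mathbb{N}$ independently with probability
\[ p_n = \min\{1,\, C n^{\theta/s-1}(\log n)^{1/s}\}, \]
with $C$ a large constant. With this choice the expected count $\mathbb{E}\,R_{\mathfrak{X},s,\theta,\tau}(\Lambda)$ is a weighted count of real-analytic solutions of \eqref{ineq:main}. We will compute it by the Davenport--Heilbronn circle method for inequalities: smooth the indicator of $(-\tau,\tau)$ by a kernel whose Fourier transform has compact support, and write the weighted count as $\int \prod_i f(\alpha)\,\hat K(\alpha) e(-\alpha\Lambda)\d\alpha$ with weighted exponential sums $f(\alpha)=\sum_n p_n e(\alpha n^\theta)$.

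\textbf{Step 1: the expectation.} On the major arc $|\alpha|\le \Lambda^{-1+\epsilon}$ we approximate $f$ by the integral $\int p_t e(\alpha t^\theta)\d t$. This gives a singular integral of size $\asymp \tau\,\Lambda^{-1}\cdot \Lambda^{s\cdot(\theta/s)/\theta}(\log\Lambda)$, that is $\asymp \tau\log\Lambda$. There is no singular series, since $\theta$ is not an integer. On the complementary range we need a power saving. To get it, we will bound $\sup|f|$ there by van der Corput / Weyl-type estimates for non-integer $\theta$. We then apply the mean value estimate from \cite{BharMVE} for $\int |f|^{2t}$, which is where the threshold $s_0$ enters; the extra $\chi$ variables are sacrificed to supply the pointwise minor-arc saving. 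Partial summation transfers these unweighted bounds to the weights $p_n$. This step should give $\mathbb{E}R\asymp\tau\log\Lambda$, and we will separately require upper and lower smoothing kernels to pass from smoothed counts back to the sharp inequality.

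\textbf{Step 2: concentration.} $R(\Lambda)=\sum_{\mathbf{x}}\prod_i t_{x_i}$ is a polynomial of degree $s$ in independent Bernoulli variables $t_n$. We apply the Kim--Vu polynomial concentration inequality, as Vu does. This requires bounding the expected partial derivatives $\mathbb{E}_j=\max_{|A|=j}\mathbb{E}\,\partial_A R$ for $1\le j\le s-1$. Each $\mathbb{E}_A R$ is a weighted count of solutions of $|x_1^\theta+\dots+x_{s-j}^\theta-(\Lambda-\sum_{a\in A}a^\theta)|<\tau$ with fewer variables. We need $\mathbb{E}_j\le \Lambda^{-\epsilon}$ for every $j\ge1$. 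This is the main obstacle: when few variables remain, the circle method is unavailable. We will instead use the crude divisor-free bounds that the mean value theorem of \cite{BharMVE} gives for $s-j$ variables, together with the observation that the weights contribute $\Lambda^{(s-j)/s-1+\epsilon}$. For very small $s-j$ we will use the trivial estimate that fixing all but one variable leaves $O(1+\tau\Lambda^{1-1/\theta})$ choices, weighted by $p_n$, which is small. We must also handle repeated variables (the non-multilinear terms) by a similar counting.

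\textbf{Step 3: all $\Lambda$ simultaneously.} Kim--Vu gives $R=\mathbb{E}R(1+o(1))$ with failure probability $\ll\Lambda^{-2}$ for a single $\Lambda$. We apply this on a grid of $\Lambda$ with spacing $\tau/10$ up to height $X$, and run it for the two thresholds $\tau/2$ and $2\tau$. Sandwiching any real $\Lambda$ between grid points then gives the bounds at width $\tau$, up to constants. The grid near height $\Lambda$ has $O(\Lambda)$ points per unit length, so we need failure probability $\ll\Lambda^{-3}$; this is obtainable from Kim--Vu by taking the deviation parameter $\lambda\asymp\log\Lambda$, at the cost of a larger $C$. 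Borel--Cantelli then yields that almost every $\mathfrak{X}$ satisfies $R_{\mathfrak{X},s,\theta,\tau}(\Lambda)\asymp\tau\log\Lambda$ for all large $\Lambda$.
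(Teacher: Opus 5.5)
Steps 1 and 3 follow the paper's architecture: a random $\mathfrak{X}$ with $p_n\asymp n^{\theta/s-1}(\log n)^{1/s}$, Davenport--Heilbronn for the mean, then Kim--Vu and Borel--Cantelli on a grid of spacing $\asymp\tau$, sandwiching real $\Lambda$ between grid points. Step 2, however, has a genuine gap. Hypothesis (2) of Lemma \ref{lem:polyconc} is false for the full polynomial $R(\Lambda)$, so Kim--Vu cannot be applied to it.

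Whenever the residual target $\Lambda_0=\Lambda-\sum_{a\in A}a^{\theta}$ is bounded, the remaining variables are bounded and their weights $p_n$ are of constant size. Then $\mathbb{E}\,\partial_A R\gg 1$, whatever mean value estimate you use. Concretely, fix a small $n_0\ge 2$ with $p_{n_0}=1$ (e.g.\ $n_0=2$ once $C$ is large). For any $(s-1)$-tuple $A$ and any $\Lambda$ with $|\Lambda-\sum_{a\in A}a^\theta-n_0^\theta|<\tau/2$, one has $\mathbb{E}\,\partial_A R(\Lambda)\ge p_{n_0}=1$. Taking $\Lambda$ to be the grid point nearest to $\sum_{a\in A}a^\theta+n_0^\theta$ gives infinitely many bad grid points.

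Your ``trivial estimate'' for small $s-j$ misses exactly this. The last variable has $O(1+\tau\Lambda_0^{1/\theta-1})$ choices, not $O(1+\tau\Lambda^{1-1/\theta})$. More importantly, its weight is small only when $\Lambda_0$ is large, and nothing in $R$ forces that.

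The missing idea, due to Vu and used in the paper, is truncation from below. With $x_1\le\cdots\le x_s$, split the count as $Q^1+Q^0$. Here $Q^1$ counts tuples with every $x_i>Y_s=\Lambda^{1/(s\theta)}$, and $Q^0$ counts those with $x_1\le Y_s$.

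Kim--Vu is applied only to $Q^1$. Fixing $A$ now leaves $\Lambda_0\gg Y_s^\theta=\Lambda^{1/s}$, and the largest remaining variable lies in $(\tilde\eta_q\Lambda_0^{1/\theta},\Lambda_0^{1/\theta}]$. H\"older against $(s-1)$-st moments (Lemmas \ref{lem:count} and \ref{lem:approxderivmean}) then gives $\mathbb{E}_A Q^1\ll\Lambda^{-1/s^3}$.

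The truncation also simplifies your Step 1. For $x_i>Y_s$ one has $\log x_i\asymp\log\Lambda$, so $\mathbb{E}\,Q^1\asymp c^s\log\Lambda\cdot U_{s,\tau/2}(\Lambda;\mathcal{T})$. The circle method then only needs $U_{s,\tau/2}(\Lambda;\mathcal{T})\asymp\tau$ for pure power weights (Theorem \ref{thm:DH}).

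The remainder $Q^0$ is not concentrated at all. Instead, one shows $\mathbb{E}\,Q^0\ll\Lambda^{-1/s^3}$ (Lemma \ref{lem:approxsmallmean}, using the trivial bound for the short sum over $[1,Y_s]$). Vu's disjoint-representation argument then gives $Q^0(\Lambda_n)\le B$ for all $n$ with positive probability, which suffices for $R\asymp\tau\log\Lambda$. Without this split, Step 2 cannot be completed.

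A minor point: your grid has $O(1/\tau)$ points per unit length, not $O(\Lambda)$, so failure probability $\Lambda^{-2}$ is already summable.
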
 

Theorem \ref{thm:main} is our main result. It can be viewed as a real approximation analog to a theorem proven by Vu \cite{VuRef} to answer a question of Nathanson \cite{Nat}. Vu's Theorem states that for a given integer $k \geq 2$, there exists $s_1(k)$ such that if $s \geq s_1(k)$ then there exists $\mathfrak{X} \subseteq \mathbb{N}$ such that for all sufficiently large $N$ one has
\[ |\{(x_1,\ldots,x_s) \in \mathfrak{X}^s: x_1^k+\cdots+x_s^k = N\}| \asymp \log(N). \]

For a positive integer $s$ and a set $\mathfrak{X} \subseteq \mathbb{N} $, let $R_{\mathfrak{X},s}(N)$ be the number of different ways in which $N$ can be expressed as a sum of $s$ elements of $\mathfrak{X}$. We say $\mathfrak{X}$ is an asymptotic additive basis of order $s$ if $R_{\mathfrak{X},s}(N) \geq 1$ all sufficiently large $N \in \mathbb{N}$. An asymptotic additive basis $\mathfrak{X}$ of order $s$ is called economical or thin if for any $\epsilon > 0$ one has $R_{\mathfrak{X}, s}(N) \ll_{\epsilon} N^{\epsilon}$. Vu's result implies the existence of economical asymptotic additive bases of finite order consisting of $k$th powers. This can also be viewed as a refinement of Waring's problem, which considers the number of ways to decompose large integers into sums of $k$th powers; see \cite{VWSurvey} for more information on this topic. 

Building on Vu's Theorem, T\'{a}fula \cite{taf26} proves the existence of economical asymptotic additive bases that are subsets of the sequence $\{ \lfloor x^{\theta} \rfloor: x \in \mathbb{N} \}$ for non-integers $\theta$. More precisely, given a non-integer $\theta > 1$, there is a set $\mathfrak{X} \subseteq \{ \lfloor x^{\theta} \rfloor: x \in \mathbb{N} \}$ such that $R_{\mathfrak{X},s}(N) \asymp N$ 
as long as $s \geq 5$ when $1 < \theta < 2$ and $s \geq (\lfloor 2\theta \rfloor +1)(\lfloor 2\theta \rfloor +2)+1$ if $\theta > 2$. See \cite{taf26} for other related results. Theorem \ref{thm:main} works as another real exponent parallel to this theorem.

An important question to consider is the optimal number of variables required to prove such thin basis theorems. Vu \cite{VuRef} posed the problem of reducing the number of variables required to prove his theorem, as the bound given in \cite{VuRef} is orders of magnitude larger than the bounds on $s$ required to show existence of solutions in Waring's problem. Using smooth number methods and more powerful techniques relating to the Hardy-Littlewood Circle Method, Wooley \cite{WoolThin} showed that Vu's Theorem holds if 
\[ s \geq k(\log k+\log \log k+2+C\log\log k/\log k) \] 
for some $C > 0$. This was recently improved further by Pliego \cite{pliegoVu} to the optimal number of variables possible with current methods available to Waring's problem; see also \cite{BW}. T\'{a}fula \cite{tafbox} has recently reduced the number of variables required to establish a combinatorial result of Vu \cite{VuRef} proven as an auxiliary result to establishing his theorem. Along these lines, a feature of Theorem \ref{thm:main} is that the bound on $s$ is  better than the bound from T\'{a}fula's result by essentially a factor of $4$, from approximately $4\theta^2$ to $\theta^2$. Inspecting our proof shows that our method also works to show that T\'{a}fula's Theorem also holds for $s \geq s_0(\theta)+\chi$. The key to this reduction is the use of an improved mean value estimate proven by the author \cite{BharMVE}. 

The problem of economical subbases has its origins in additive combinatorics. Vu's work builds on the earlier work of Z\"{o}llner \cite{Zollthesis, Zollpaper}, Wirsing \cite{Wirsing}, Erd\H{o}s and Tetali \cite{ErdTet}, and Nathanson \cite{Nat} on existence of thin bases and subbases of various arithmetic sequences. Such results are also related to a conjecture of Erd\H{o}s and Tur\'{a}n, which states that if $\mathfrak{X} \subseteq \mathbb{N}$ is an additive basis of $\mathbb{N}$ of order $s$, then $R_{\mathfrak{X},s}(N)$ cannot be a bounded function of $N$. Theorem \ref{thm:main} belongs to the class of real extensions of additive combinatorial results, another topic of interest to combinatorialists. 

The topic of real-exponent variants of Waring's problem has also seen increased interest in recent years. 
Deshouillers \cite{Des} proved an asymptotic formula for number of solutions to the equation 
\[ \lfloor x_1^{\theta} \rfloor+\cdots+\lfloor x_s^{\theta} \rfloor = N \]
for $\theta >12$ and $s \geq 6\theta^3(\log(\theta)+14)$. Arkhipov and Zhitkov \cite{ArkhZhit} reduced the bound on $s$ to $s \geq 22 \theta^2(\log(\theta)+4)$. Applying recent breakthroughs on The Main Conjecture in Vinogradov's Mean Value Theorem by Bourgain, Demeter and Guth \cite{BDG} and Wooley \cite{WooleyCube, WooNEC}, Poulias \cite{Poulias1} proved an asymptotic formula for the number of solutions to \eqref{ineq:main} under the assumptions $\theta > 2$ and $s \geq (\lfloor 2\theta\rfloor+1)(\lfloor 2\theta \rfloor+2)+1$. The same method works to reduce the bound of Arkhipov and Zhitkov to $\theta > 2$ and $s \geq (\lfloor 2\theta\rfloor+1)(\lfloor 2\theta \rfloor+2)+1$. As a consequence of the mean value estimates in \cite{BharMVE}, the bounds on $s$ in both of these problems are improved to $s \geq \theta^2+9\theta^{3/2}+1$. 

Our general approach is to use the Davenport-Heilbronn method to prove estimates for a weighted Diophantine counting problem, similar to the approach of Wooley \cite{WoolThin}, and to use the probabilistic techniques developed by Vu \cite{VuRef}. The structure of the paper is thus organized as follows. In Section \ref{sec:meanval} we prove a number of mean value estimates. These estimates will be essential to translating the problem to the real approximation setting and will also be the key to reducing the number of variables using the results of \cite{BharMVE}. In Section \ref{sec:DH} we prove estimates for a weighted Diophantine counting problem. This section is similar to Section 3 in \cite{WoolThin}. We use this estimate in Section \ref{sec:prob} along with probabilistic tools to complete the proof. Other probabilistic results, such as those in \cite{JanRuc}, may allow one to derive similar results for \eqref{ineq:main} to those of \cite{pliegoVu} and \cite{taf26}. 

\subsection{Notation}
We use the standard Vinogradov notation $\ll $ and $\gg$ and write $A \asymp B$ to mean $A \ll B$ and $A \gg B$. For $t \in \mathbb{R}$, we write $\lfloor t \rfloor$ to denote the largest integer not exceeding $t$ and $\lceil t \rceil$ to denote the smallest integer at least as large as $t$. We often abbreviate a tuple $(x_1,\ldots,x_s)$ as $\mathbf{x}$, and we will often reiterate this to be clear. All sums over intervals will be over the integers contained in that interval. We write $e(\alpha)=e^{2\pi i\alpha}$. 

\subsection{Acknowledgments} The author is grateful to Prof. T.D. Wooley for many helpful suggestions and for funding support from NSF grant DMS-2502625.

\section{Mean Value Estimates} \label{sec:meanval}
In this section we prove a number of estimates on the moments of exponential sums and the number of solutions to related Diophantine counting problems. The results proved in this section will be useful at various points in the remainder of the paper. We start with some definitions.

For $\delta > 0$, define $\Delta = (2\delta)^{-1}$. Given intervals $I_1,I_2 \subseteq \mathbb{R}$, let $W_u^0(\delta; I_1,I_2)$ be the number of integer solutions $(x_1,\ldots, x_{2u})$ to the Diophantine inequality 
\begin{align} \label{eq:diophsym}
|x_1^{\theta}+\cdots+x_u^{\theta}-x_{u+1}^{\theta}-\cdots-x_{2u}^{\theta}| < \delta 
\end{align}
with $x_1,\ x_{u+1} \in I_1$ and $x_i \in I_2$ for $i \neq 1, u+1$. If $I_1 = I_2 = I$, let $W_u^0(\delta;I) = W_u^0(\delta; I_1,I_2)$. 
Let 
\[ W_{u}(\delta;I) = \sum_{\mathbf{x}} (x_1\cdots x_{2u})^{-1+\theta/s}, \]
with the sum being over $\mathbf{x} = (x_1,\ldots,x_{2u}) \in I^{2u}$ satisfying \eqref{eq:diophsym}. 
For an interval $I \subseteq \mathbb{R}$, 
define the exponential sum 
\[ g_0(\alpha; I) = \sum_{x \in I} e(\alpha x^{\theta}). \] 
The next lemma is a special case of Lemma 3.2 in \cite{Poulias1}. 

\begin{lemma} \label{lem:poulcount}
Fix real $\delta > 0$ and let $\Delta = (2\delta)^{-1}$. For any intervals $I_1,I_s \subseteq \mathbb{R}$, we have 
\[ W_u^0(\delta; I_1,I_2) \asymp \delta \int_{-\Delta}^{\Delta} |g_0(\alpha;I_1)^2 g_0(\alpha;I_2)^{2u-2}|\d\alpha. \]
The implicit constants in these estimates are independent of $I_1,I_2, \theta$ and $\delta$. 
\end{lemma}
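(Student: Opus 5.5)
The plan is the standard Davenport--Heilbronn kernel argument: sandwich the indicator of $(-\delta,\delta)$ between two nonnegative functions whose Fourier transforms are supported in $[-\Delta,\Delta]$ (or at least are nonnegative and concentrated there). Write
\[ \mathcal{F}(\mathbf{x}) = x_1^{\theta}+\cdots+x_u^{\theta}-x_{u+1}^{\theta}-\cdots-x_{2u}^{\theta}, \]
with $x_1,x_{u+1}\in I_1$ and the other variables in $I_2$. Expanding
\[ \int_{-\Delta}^{\Delta} |g_0(\alpha;I_1)|^2 |g_0(\alpha;I_2)|^{2u-2}\d\alpha \]
(the modulus of the product equals the product of moduli) gives
\[ \sum_{\mathbf{x}} \int_{-\Delta}^{\Delta} e(\alpha \mathcal{F}(\mathbf{x}))\d\alpha = \sum_{\mathbf{x}} \frac{\sin(2\pi\Delta \mathcal{F}(\mathbf{x}))}{\pi \mathcal{F}(\mathbf{x})}. \]
This kernel is not positive, so I would not use it directly. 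Instead I would insert a Fejér-type weight.

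\textbf{Upper bound.} Let $K(\alpha)=\Delta^{-1}\max(0,\Delta-|\alpha|)$, supported on $[-\Delta,\Delta]$, with $0\le K\le 1$. Its Fourier transform is
\[ \hat K(t)=\int K(\alpha)e(\alpha t)\d\alpha = \frac{\sin^2(\pi\Delta t)}{\Delta\pi^2t^2}\ \ge 0. \]
For $|t|<\delta=(2\Delta)^{-1}$ we have $\pi\Delta|t|\le \pi/2$, so $\sin(\pi\Delta t)\ge (2/\pi)\pi\Delta t$, and hence $\hat K(t)\ge 4\Delta/\pi^2\gg \delta^{-1}$. Since the integrand $|g_0(\alpha;I_1)|^2|g_0(\alpha;I_2)|^{2u-2}$ is nonnegative and $K\le 1$,
\[ \int_{-\Delta}^{\Delta}|g_0(\alpha;I_1)^2g_0(\alpha;I_2)^{2u-2}|\d\alpha \ \ge\ \int K(\alpha)\,|g_0(\alpha;I_1)|^2|g_0(\alpha;I_2)|^{2u-2}\d\alpha = \sum_{\mathbf{x}}\hat K(\mathcal{F}(\mathbf{x})). \]
Dropping the terms with $|\mathcal{F}(\mathbf{x})|\ge\delta$ (they are nonnegative), the right side is $\gg \delta^{-1}W_u^0(\delta;I_1,I_2)$. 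This gives $W_u^0 \ll \delta\int$, with absolute constants.

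\textbf{Lower bound.} This is the harder direction, because the integrand on $[-\Delta,\Delta]$ must be dominated by a positive-kernel count. Take $L(\alpha)$ with $L\ge 1$ on $[-\Delta,\Delta]$ and $\hat L$ supported in $[-c\delta,c\delta]$ with $\hat L\ll \Delta$. A concrete choice is
\[ L(\alpha)=C\left(\frac{\sin(\pi\alpha/(4\Delta))}{\pi\alpha/(4\Delta)}\right)^2 \]
with a suitable absolute constant $C$, using that $\sin x/x\ge 2/\pi$ for $|x|\le\pi/2$. Then $\hat L(t)$ is a triangle function supported on $|t|\le 1/(4\Delta)=\delta/2$, of height $\ll\Delta$. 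Hence
\[ \int_{-\Delta}^{\Delta}|g_0(\alpha;I_1)|^2|g_0(\alpha;I_2)|^{2u-2}\d\alpha \le \int_{\mathbb{R}} L(\alpha)\,|g_0(\alpha;I_1)|^2|g_0(\alpha;I_2)|^{2u-2}\d\alpha = \sum_{\mathbf{x}}\hat L(\mathcal{F}(\mathbf{x})) \ll \Delta\, W_u^0(\delta/2;I_1,I_2)\le \Delta\, W_u^0(\delta;I_1,I_2). \]
The interchange of sum and integral is justified because the sum over $\mathbf{x}$ is finite when the intervals are bounded, and $L\in L^1$. Since $\Delta\asymp\delta^{-1}$, this gives the reverse inequality.

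The main point to watch is that every constant depends only on the kernels, not on $\theta$, on $I_1,I_2$, or on $\delta$. This holds because both estimates are pointwise kernel comparisons applied to nonnegative integrands, after the scaling $\alpha\mapsto\alpha/\Delta$. If the intervals are unbounded the statement is vacuous or infinite on both sides, so I would assume them bounded.
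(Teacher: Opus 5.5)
Your proof is correct and complete. For $W_u^0\ll\delta\int_{-\Delta}^{\Delta}$ you use the triangle weight on $[-\Delta,\Delta]$, whose transform $\sin^2(\pi\Delta t)/(\Delta\pi^2t^2)$ is nonnegative and $\ge 4\Delta/\pi^2$ for $|t|<\delta$; for the reverse inequality you use a $\operatorname{sinc}^2$ majorant of $\chi_{[-\Delta,\Delta]}$, whose transform is a triangle of height $O(\Delta)$ supported in $|t|\le\delta/2$. Both directions have absolute constants. The paper does not prove this lemma itself; it cites Lemma 3.2 of \cite{Poulias1}. Your Fej\'er-kernel sandwich is the standard argument behind that result, and it is the same device the paper uses in weighted form in Lemmas \ref{lem:count} and \ref{lem:symcount}.
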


\begin{proof} See Lemma 3.2 in \cite{Poulias1}. 
\end{proof}

The following is a bound on the number of solutions to \eqref{eq:diophsym}, required to bound mean values of weighted exponential sums. It follows readily from Theorem 1.1 in \cite{BharMVE} and is the key to the factor $4$ improvement in the number of variables in Theorem \ref{thm:main} over T\'{a}fula's Theorem. 
\begin{lemma} \label{lem:unweight}
Let $\theta > 2$ and fix $ \eta \in (0, 1]$. Let $r$ be an integer with $2r \geq s_0$. Let $P > 0$ be a real number and define $I$ as the interval $(\eta P, P]$. Let $\kappa > 0 $ (possibly depending on $P$) such that $1 \ll \kappa$. Then for any fixed $\epsilon >0$ one has 
\[  W_r^0((2\kappa)^{-1};I) \ll_{\epsilon} P^{2r-\theta+\epsilon}, \]
where the implicit constants may depend on $\theta, r, \epsilon$ and $\eta$ but are independent of $\kappa$ and $P$.
\end{lemma}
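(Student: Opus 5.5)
The plan is to deduce the bound directly from Theorem 1.1 of \cite{BharMVE}, which I take to give, for $2r \geq s_0$ and an interval $(\eta P, P]$, a mean value estimate of the shape
\[ \int_{-1}^{1} |g_0(\alpha;I)|^{2r}\d\alpha \ll_\epsilon P^{2r-\theta+\epsilon}, \]
or equivalently the count of solutions to \eqref{eq:diophsym} with $\delta$ of size $1$. The first step is to apply Lemma \ref{lem:poulcount} with $I_1=I_2=I$, $u=r$ and $\delta = (2\kappa)^{-1}$, so that $\Delta = \kappa$. This gives
\[ W_r^0((2\kappa)^{-1};I) \asymp \kappa^{-1}\int_{-\kappa}^{\kappa} |g_0(\alpha;I)|^{2r}\d\alpha . \]
So it suffices to show that the integral over $[-\kappa,\kappa]$ is $\ll \kappa P^{2r-\theta+\epsilon}$.

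The second step handles the range of integration. Since $g_0$ is not periodic, I would not try to reduce modulo $1$. Instead I would split $[-\kappa,\kappa]$ into $O(\kappa)$ unit intervals $[n,n+1]$, using $\kappa \gg 1$ so that the number of pieces is $\ll \kappa$. The required input is then a bound uniform in $n$:
\[ \int_n^{n+1}|g_0(\alpha;I)|^{2r}\d\alpha \ll P^{2r-\theta+\epsilon}. \]
If Theorem 1.1 of \cite{BharMVE} is stated directly as a counting bound for the inequality with fixed $\delta$ and arbitrary bounded shift, this is immediate. Otherwise I would recover it by the standard Fejér-kernel majorant. One has
\[ \int_n^{n+1}|g_0|^{2r} \ll \int_{\mathbb{R}} |g_0(\alpha)|^{2r} K(\alpha-n)\d\alpha, \]
where $K \geq 0$ satisfies $K \gg 1$ on $[0,1]$ and $\hat K$ is supported in $[-1,1]$. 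Expanding the right-hand side gives a sum over solutions of $|x_1^\theta+\cdots-x_{2r}^\theta|<1$ weighted by $e(n\cdot(\text{difference}))\hat K(\cdot)$. In absolute value this is at most $W_r^0(1;I)$, independent of $n$.

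The final step is to bound $W_r^0(1;I) \ll P^{2r-\theta+\epsilon}$ using \cite{BharMVE}, possibly after a further application of Lemma \ref{lem:poulcount} with $\delta$ fixed. Combining the steps gives
\[ W_r^0((2\kappa)^{-1};I) \ll \kappa^{-1}\cdot\kappa\, P^{2r-\theta+\epsilon} = P^{2r-\theta+\epsilon}, \]
with constants independent of $\kappa$ and $P$.

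I expect the main obstacle to be matching the exact normalization of Theorem 1.1 in \cite{BharMVE} to the statement needed here. That theorem may be stated for a dyadic interval $(P,2P]$, for a fixed $\eta$, or for the full range $[1,P]$. Changing $\eta$ is harmless: cover $(\eta P,P]$ by $O_\eta(1)$ dyadic pieces and apply H\"older's inequality to $|\sum_j g_0(\alpha;I_j)|^{2r}$. Likewise, if the theorem only gives the inequality with $\delta \asymp 1$, the monotonicity $W_r^0(\delta;I)\le W_r^0(\delta';I)$ for $\delta\le\delta'$ lets one pass to whatever fixed tolerance is stated. It also makes the case of bounded $\kappa$ trivial, since then $\delta\gg1$.
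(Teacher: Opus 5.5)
Your argument is correct, but it is organized differently from the paper's. The paper uses Lemma \ref{lem:poulcount} only to restate the claim as $\int_{-\kappa}^{\kappa}|g_0(\alpha;I)|^{2r}\d\alpha\ll\kappa P^{2r-\theta+\epsilon}$. It then cites Theorem 3.4 of \cite{Poulias1} (for $2<\theta<3$) and Theorem 1.1 of \cite{BharMVE} (for $\theta>3$), which it says already give this for every $\kappa\ge1$ when $\eta=1/2$. It adds only that ``the same proof works'' for other fixed $\eta$, and that $\kappa\le1$ follows by enlarging the range to $[-1,1]$.

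You instead assume just the fixed-tolerance, dyadic case of the external estimate. You then derive the uniformity in $\kappa$ yourself (unit intervals plus a shifted Fej\'er majorant), and likewise in $\eta$ (dyadic splitting plus H\"older). So your reduction does not depend on re-running the cited proofs, whereas the paper's is shorter but leans on an unchecked adaptation claim.

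Your argument can be streamlined. $W_r^0(\delta;I)$ is increasing in $\delta$, and $\kappa\gg1$ means $(2\kappa)^{-1}\le\delta_0$ for a fixed $\delta_0$. Hence $W_r^0((2\kappa)^{-1};I)\le W_r^0(\delta_0;I)$ for \emph{every} admissible $\kappa$, not just bounded ones, and the Fej\'er step is unnecessary.

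In the $\eta$ step, the last dyadic piece is a proper subinterval $J$ of some $(Q/2,Q]$. The integral $\int|g_0(\alpha;J)|^{2r}\d\alpha$ cannot be compared with the full dyadic mean value pointwise in $\alpha$. Compare the counts instead, which are trivially monotone in the interval, and return to integrals via Lemma \ref{lem:poulcount}.

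Finally, for $2<\theta<3$ the external input matching $s_0=(\lfloor 2\theta\rfloor+1)(\lfloor 2\theta\rfloor+2)$ is Theorem 3.4 of \cite{Poulias1}, not \cite{BharMVE}.
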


\begin{proof} By Lemma \ref{lem:poulcount} with $\delta = (2\kappa)^{-1}$, it suffices to show that for $2r \geq s_0(\theta)$ one has 
\[ \int_{-\kappa}^{\kappa} \Big| \sum_{x \in I} e(\alpha x^{\theta}) \Big| \d\alpha \ll \kappa P^{2r-\theta+\epsilon}. \]
This is exactly the content of Theorem 3.4 in \cite{Poulias1} and Theorem 1.1 in \cite{BharMVE}. Note that both theorems prove the case $\kappa \geq 1$ and $\eta = 1/2$, but the same proof works exactly in the same way for any fixed $\eta \in (0,1]$ and any $\kappa \gg 1$. (In fact, for $1 \ll \kappa \leq 1$ the required estimate immediately follows by simply extending the interval of integration to $[-1,1]$ because the integrand is non-negative). When $2 < \theta < 3$ the above estimate holds for $2r \geq s_0$ by Theorem 3.4 in \cite{Poulias1} and for $\theta > 3$ the estimate holds for $2r \geq s_0(\theta)$ by Theorem 1.1 in \cite{BharMVE}. 
\end{proof}

\subsection{Substitutes for Orthogonality} An issue we face when counting solutions to Diophantine inequalities is the lack of orthogonality. Here, we prove some counting lemmas and mean value estimates which will be useful in the analysis further on. 

For positive $Z \in \mathbb{R}$, a region $\Omega \subseteq \mathbb{R}^m$ and $\delta > 0$ ,let 
\[ U_{\delta,m}(Z;\Omega) = \sum_{\mathbf{x}}  (x_1\cdots x_m)^{-1+\theta/s}, \]
where the sum is over solutions $\mathbf{x} = (x_1,\ldots,x_m) \in \Omega$ to 
\begin{align} \label{eq:approxdioph}
|x_1^{\theta}+\cdots+x_m^{\theta}-Z| < \delta.
\end{align}
When $\Omega = I_1\times \cdots \times I_m$ for intervals $I_i \subseteq \mathbb{R}$, we write $U_{\delta,m}(Z;\Omega) = U_{\delta,m}(Z;I_1,\ldots,I_m)$. 
For an interval $I \subseteq \mathbb{R}$ define the weighted exponential sum 
\[ g(\alpha; I) = \sum_{x \in I} x^{-1+\theta/s}e(\alpha x^{\theta}). \]

\begin{lemma} \label{lem:count} 
Let $I_1,\ldots I_m \subseteq \mathbb{R}$ be finite intervals. Let $Z > 0$ be a real number. For $\delta > 0$ define $\Delta = (2\delta)^{-1}$. Then for any integer $m$ between $1$ and $s-1$ one has 
\[ U_{\delta,m}(Z; I_1,\ldots, I_m) \ll \Delta^{\frac{s-m-1}{s-1}} \prod_{i=1}^m \Big(\int_{-\Delta}^{\Delta} |g(\alpha; I_i)|^{s-1}\d\alpha\Big)^{\frac{1}{s-1}}, \]
and for any integer $1 \leq j \leq s$ one has 
\[ U_{\delta,s}(Z; I_1,\ldots, I_s) \ll \|g_j\|_{\infty}\prod_{\substack{i=1 \\ i \neq j}}^s \Big(\int_{-\Delta}^{\Delta} |g(\alpha;I_i)|^{s-1}\d\alpha \Big)^{\frac{1}{s-1}}. \]
\end{lemma}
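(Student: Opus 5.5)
We prove Lemma \ref{lem:count} by replacing the sharp cutoff in \eqref{eq:approxdioph} with a smooth kernel whose Fourier transform lives in $[-\Delta,\Delta]$, and then applying H\"older's inequality.

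\textbf{Smoothing.} Let $K(t) = \big(\tfrac{\sin(\pi t/2\delta)}{\pi t/2\delta}\big)^2$ be the Fej\'er kernel rescaled at scale $\delta$. Then $K(t)\gg 1$ for $|t|<\delta$, $K\ge 0$ everywhere, and
\[ K(t) = \int_{-\Delta}^{\Delta} \widehat{K}(\alpha) e(\alpha t)\d\alpha, \qquad 0 \le \widehat K(\alpha) \ll \delta \]
(up to adjusting constants in the scaling). Since the weights $(x_1\cdots x_m)^{-1+\theta/s}$ are positive, each solution of \eqref{eq:approxdioph} can be dominated by $K(x_1^\theta+\cdots+x_m^\theta-Z)$. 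This gives
\[ U_{\delta,m}(Z;I_1,\ldots,I_m) \ll \sum_{\mathbf{x}} \prod_i x_i^{-1+\theta/s}\, K\Big(\sum_i x_i^\theta - Z\Big) = \int_{-\Delta}^{\Delta}\widehat K(\alpha)\prod_{i=1}^m g(\alpha;I_i)\, e(-\alpha Z)\d\alpha . \]
Taking absolute values and using $\widehat K\ll\delta\ll\Delta^{-1}$, we obtain
\[ U_{\delta,m} \ll \Delta^{-1}\int_{-\Delta}^{\Delta}\prod_{i=1}^m |g(\alpha;I_i)|\d\alpha. \]

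\textbf{H\"older for $1\le m\le s-1$.} Apply H\"older with $m$ exponents equal to $s-1$ and one further exponent $p$ attached to the constant function $1$, where $1/p = 1 - m/(s-1) = (s-1-m)/(s-1)$. The factor from the constant function is $(2\Delta)^{(s-1-m)/(s-1)}$. Combined with the prefactor $\Delta^{-1}$, this yields
\[ U_{\delta,m} \ll \Delta^{\frac{s-m-1}{s-1}-1} \prod_{i=1}^m\Big(\int_{-\Delta}^{\Delta}|g(\alpha;I_i)|^{s-1}\d\alpha\Big)^{\frac1{s-1}}. \]
This is actually stronger than the stated bound by a factor $\Delta^{-1}$. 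Whether the statement's normalization of the integral already absorbs this factor needs checking; in any case the claimed inequality follows, at least when $\Delta\gg1$. When $\Delta\ll 1$, the same computation goes through: we only need $\widehat K\ll \delta$, and then $\Delta^{-1}\ll 1$ is harmless only if we are careful. I would track this factor honestly, and if necessary enlarge it using $\Delta^{-1}\le$ const in the regime of interest.

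\textbf{The case $m=s$.} Pull out $|g(\alpha;I_j)|\le\|g_j\|_\infty$. The remaining $s-1$ factors are handled by H\"older with all exponents equal to $s-1$, so there is no leftover power of $\Delta$. This gives
\[ U_{\delta,s}\ll \Delta^{-1}\|g_j\|_\infty\prod_{i\ne j}\Big(\int_{-\Delta}^{\Delta}|g(\alpha;I_i)|^{s-1}\d\alpha\Big)^{\frac1{s-1}}, \]
which again implies the stated bound.

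\textbf{Main obstacle.} The only delicate point is the kernel construction. We need a nonnegative $K$ with $K\gg 1$ on $(-\delta,\delta)$, with $\widehat K$ supported in $[-\Delta,\Delta]$ where $\Delta=(2\delta)^{-1}$ exactly, and with $|\widehat K|\ll\delta$. The Fej\'er kernel $\big(\sin(\pi\Delta t)/(\pi\Delta t)\big)^2$ has Fourier transform $\Delta^{-1}\max(0,1-|\alpha|/\Delta)$, which is supported in $[-\Delta,\Delta]$ with sup $\Delta^{-1}=2\delta$. Moreover, for $|t|<\delta$ we have $\pi\Delta|t|<\pi/2$, so $K(t)\ge 4/\pi^2$. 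These are exactly the properties required, and the rest of the argument is routine.
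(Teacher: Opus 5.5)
Your proposal is correct and follows the paper's proof essentially verbatim. You use the same majorant $\operatorname{sinc}(\Delta t)^2\ge 4/\pi^2$ on $|t|<\delta$, whose Fourier transform $\Delta^{-1}\max\{0,1-|\alpha|/\Delta\}$ is supported on $[-\Delta,\Delta]$, followed by the same H\"older step (the constant function absorbs the leftover exponent when $m\le s-1$, and $\|g_j\|_\infty$ is pulled out when $m=s$).

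The extra factor $\Delta^{-1}\asymp\delta$ you found is genuinely present in the paper's proof too. That proof ends with $\frac{\pi^2\delta}{2}\int_{-\Delta}^{\Delta}|g_1\cdots g_m|\d\alpha$, and Lemmas \ref{lem:approxsmallmean} and \ref{lem:approxderivmean} use the bound with this factor of $\tau$. So the printed statement has simply dropped it.

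Your hedging about the regime $\Delta\ll1$ is muddled, since there $\Delta^{-1}$ is large, not small. In fact the printed bound cannot hold uniformly as $\delta\to\infty$: the left side tends to $\prod_i g(0;I_i)$, while the right side is $O(\Delta\prod_i g(0;I_i))$. The version carrying the factor $\delta$, which is what you proved, is the one to state.
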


\begin{proof}
We define two auxiliary functions. Let 
\[ K_0(\xi) = \operatorname{sinc}(\xi)^2 \quad \text{ and } \quad \Phi(x) = \max\{0, 1-|x| \}, \]
where $\operatorname{sinc}(\alpha) = (\pi \alpha)^{-1}\sin(\pi\alpha)$ if $\alpha \neq 0$ and $\operatorname{sinc}(\alpha) = 1$ if $\alpha=0$. One has the well-known identities
\[ K_0(\xi) = \int_{\mathbb{R}} e(-x\xi)\Phi(x)\d x \quad \text{ and } \quad \Phi(x) = \int_{\mathbb{R}} e(x\xi)K_0(\xi)\d\xi, \]
as one may see from Lemma 20.1 in \cite{DavBook}. We also have the inequality $\pi^2K_0(\alpha)/4 \geq 1$ for $|\alpha|\leq 1/2$. For a positive integer $m \leq s$, sufficiently large positive $Z \in \mathbb{R}$, $\mathbf{x} = (x_1,\ldots,x_m)$,  and $\delta > 0$, define 
\[ \sigma_{m,\theta}(Z; \mathbf{x}) = x_1^{\theta}+\cdots+x_m^{\theta}-Z \quad \text{ and } \quad \xi_0 = \frac{1}{2\delta}\sigma_{m,\theta}(Z; \mathbf{x}). \]

Writing $\mathbf{I} = I_1\times\cdots\times I_m$, we have by definition that 
\[ U_{\delta}(Z; I_1,\ldots,I_m) = \sum_{\substack{\mathbf{x} \in \mathbf{I} \\ |\sigma_{m,\theta}(Z; \mathbf{x})| < \delta }} \prod_{i=1}^{m} x_i^{-1+\theta/s} = \sum_{\substack{\mathbf{x} \in \mathbf{I} \\ |\xi_0| < 1/2 }} \prod_{i=1}^{m} x_i^{-1+\theta/s}. \]
Hence, 
\[ U_{\delta}(Z; I_1,\ldots,I_m) \leq \frac{\pi^2}{4}\sum_{\substack{x_i \in I_i \\ |\xi_0| < 1/2 }} K_0(\xi_0) \prod_{i=1}^{m} x_i^{-1+\theta/s} \leq \frac{\pi^2}{4}\sum_{\mathbf{x} \in \mathbf{I}} K_0(\xi_0) \prod_{i=1}^{m} x_i^{-1+\theta/s}, \]
where we write $\mathbf{x} = (x_1,\ldots,x_m)$ and $\mathbf{I} = I_1 \times \cdots \times I_m$. Using the relation between $K_0$ and $\Phi$ we then have 
\[ U_{\delta}(Z; I_1,\ldots,I_m) \leq \frac{\pi^2}{4} \sum_{\mathbf{x} \in \mathbf{I}}(x_1\cdots x_m)^{-1+\theta/s} \int_{-\infty}^{\infty} e(\xi_0u)\Phi(-u)du. \]
Using the change of variables $\alpha = (2\delta)^{-1}u$, we then have 
\[ U_{\delta}(Z; I_1,\ldots,I_m) \leq \frac{\pi^2\delta}{2} \sum_{\mathbf{x} \in \mathbf{I}}(x_1\cdots x_m)^{-1+\theta/s} \int_{-\infty}^{\infty} e(\sigma_{m,\theta}(Z; \mathbf{x})\alpha)\Phi(-2\delta\alpha)d\alpha. \]
For ease of notation, write $g_i(\alpha) = g(\alpha;I_i)$. 
We may use Fubini's Theorem to observe that the quantity on the right above is 
\[ = \frac{\pi^2\delta}{2} \int_{-\infty}^{\infty}g_1(\alpha)\cdots g_m(\alpha)e(-Z\alpha)\Phi(-2\delta\alpha)d\alpha. \]
Then since $\Phi(-2\delta\alpha) \leq 1$ for $|\alpha| \leq (2\delta)^{-1}$ and $\Phi(-2\delta\alpha) = 0$ for $|\alpha| \geq (2\delta)^{-1}$, we see 
\[ U_{\delta}(Z; I_1,\ldots,I_m) \leq \frac{\pi^2\delta}{2} \int_{-\Delta}^{\Delta}|g_1(\alpha)\cdots g_m(\alpha)|\d\alpha \]
where $\Delta = (2\delta)^{-1}$. If $m \leq s-1$, then by H\"{o}lder's Inequality, we then have 
\[ \int_{-\Delta}^{\Delta}|g_1(\alpha)\cdots g_m(\alpha)|d\alpha = \int_{-\Delta}^{\Delta}|1^{s-m}g_1(\alpha)\cdots g_m(\alpha)|d\alpha \leq (2\Delta)^{\frac{s-m-1}{s-1}} \prod_{i=1}^m \big( \int_{-\Delta}^{\Delta} |g_i(\alpha)|^{s-1} d\alpha \big)^{\frac{1}{s-1}}. \]

If instead $m = s$, then for any $1 \leq j \leq s$ we have 
\[ \int_{-\Delta}^{\Delta}|g_1(\alpha)\cdots g_m(\alpha)|\d\alpha = \int_{-\Delta}^{\Delta} \prod_{i=1}^s |g_i(\alpha)| \d\alpha \leq \| g_j \|_{\infty} \prod_{\substack{i = 1 \\ i \neq j}}^s \big( \int_{-\Delta}^{\Delta} |g_i(\alpha)|^{s-1}d\alpha \big)^{\frac{1}{s-1}}. \]
\end{proof} 

We will also need a weighted almost-orthogonality estimate similar to Lemmas \ref{lem:poulcount} and \ref{lem:count}. 

\begin{lemma} \label{lem:symcount}
Let $\theta > 2$ and $\delta > 0$. Let $\Delta = (2\delta)^{-1}$. For $r \in \mathbb{N}$ and an interval $I \subseteq \mathbb{R}$, we have 
\[ \delta \int_{-\Delta}^{\Delta} |g(\alpha;I)|^{2r}\d\alpha \ll W_r(\delta;I). \]
\end{lemma}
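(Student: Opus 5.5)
The plan is to run the argument of Lemma \ref{lem:count} in reverse: bound the indicator of $[-\Delta,\Delta]$ from above by a nonnegative kernel whose Fourier transform is compactly supported, and then expand $|g(\alpha;I)|^{2r}$. I will use the same auxiliary functions $K_0(\xi)=\operatorname{sinc}(\xi)^2$ and $\Phi(x)=\max\{0,1-|x|\}$. I will also use the inequality $\pi^2K_0(\xi)/4\geq 1$ for $|\xi|\leq 1/2$. Since $|\delta\alpha|\leq 1/2$ exactly when $|\alpha|\leq\Delta$, and the integrand is nonnegative, we get
\[ \int_{-\Delta}^{\Delta}|g(\alpha;I)|^{2r}\d\alpha \leq \frac{\pi^2}{4}\int_{-\infty}^{\infty}|g(\alpha;I)|^{2r}K_0(\delta\alpha)\d\alpha. \]

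Next I expand $|g(\alpha;I)|^{2r}=g(\alpha;I)^r\overline{g(\alpha;I)}^{\,r}$ as a finite sum over $\mathbf{x}=(x_1,\ldots,x_{2r})\in I^{2r}$ (we may assume $I$ is bounded, since otherwise $g$ is not defined as a finite sum). Write $\sigma(\mathbf{x})=x_1^\theta+\cdots+x_r^\theta-x_{r+1}^\theta-\cdots-x_{2r}^\theta$. Each term has the positive weight $(x_1\cdots x_{2r})^{-1+\theta/s}$ multiplied by $e(\alpha\sigma(\mathbf{x}))$. The sum is finite and $K_0$ is integrable, so interchanging the sum and the integral is justified. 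Substituting $\xi=\delta\alpha$ and using $\Phi(x)=\int e(x\xi)K_0(\xi)\d\xi$ gives
\[ \int_{-\infty}^{\infty}e(\alpha\sigma(\mathbf{x}))K_0(\delta\alpha)\d\alpha=\delta^{-1}\Phi\big(\sigma(\mathbf{x})/\delta\big). \]

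Finally, $0\leq\Phi\leq 1$, and $\Phi(\sigma(\mathbf{x})/\delta)=0$ unless $|\sigma(\mathbf{x})|<\delta$, which is precisely the inequality \eqref{eq:diophsym} with $u=r$. Because all the weights are positive, dropping $\Phi$ on the surviving terms only increases the sum. This yields
\[ \delta\int_{-\Delta}^{\Delta}|g(\alpha;I)|^{2r}\d\alpha\leq\frac{\pi^2}{4}\sum_{\substack{\mathbf{x}\in I^{2r}\\ |\sigma(\mathbf{x})|<\delta}}(x_1\cdots x_{2r})^{-1+\theta/s}=\frac{\pi^2}{4}W_r(\delta;I), \]
with an absolute implied constant. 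The only points needing care are the direction of the scaling in the substitution $\xi=\delta\alpha$ and the positivity of the weights; there is no genuine obstacle.
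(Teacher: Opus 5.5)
Your proposal is correct and is essentially the paper's argument: the same kernel pair $K_0$, $\Phi$, the bound $K_0(\delta\alpha)\geq 4/\pi^2$ for $|\alpha|\leq\Delta$, the Fourier identity after the substitution $\xi=\delta\alpha$, and positivity of the weights together with $0\leq\Phi\leq 1$. The only difference is that the paper starts from $W_r(\delta;I)\geq\sum_{\mathbf{x}}(x_1\cdots x_{2r})^{-1+\theta/s}\Phi(\sigma(\mathbf{x})/\delta)$ and works toward the integral, while you start from the integral; the individual steps are identical.
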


\begin{proof} The proof is similar to the proofs of Lemmas \ref{lem:count} and \ref{lem:poulcount}. Define the functions $K_0(\xi)$ and $\Phi(x)$ as in the proof of Lemma \ref{lem:count}. For $\mathbf{x} = (x_1,\ldots,x_{2r})$
\[ \upsilon_{r,\theta}(\mathbf{x}) = x_1^{\theta}+\cdots+x_r^{\theta}-x_{r+1}^{\theta}-\cdots-x_{2r}^{\theta} \quad \text{ and } \quad \xi_1 = \frac{1}{2\delta}\upsilon_{r,\theta}(\mathbf{x}). \]
Then since $\Phi(x) \leq 1$ we see 
\[ W_r(\delta;I) = \sum_{\substack{\mathbf{x} \in I^{2r} \\ |\xi_1| < 1/2 }} (x_1\cdots x_{2r})^{-1+\theta/s} \geq \sum_{\mathbf{x} \in I^{2r}} (x_1\cdots x_{2r})^{-1+\theta/s}  \Phi(2\xi_1). \] 
The right hand side above is 
\begin{align*} & = \sum_{\mathbf{x} \in I^{2r}} (x_1\cdots x_{2r})^{-1+\theta/s}  \int_{-\infty}^{\infty} e(-2\xi_1u)K_0(u)du 
\\ & = \delta \sum_{\mathbf{x} \in I^{2r}} (x_1\cdots x_{2r})^{-1+\theta/s} \int_{-\infty}^{\infty} e(-\upsilon_{r,\theta}(\mathbf{x})\alpha)K_0(\delta \alpha)d\alpha 
\\ & = \delta \int_{-\infty}^{\infty} |g(\alpha;I)|^{2r}K_0(\delta \alpha)\d\alpha \geq \frac{4\delta}{\pi^2}\int_{-\Delta}^{\Delta} |g(\alpha;I)|^{2r} \d\alpha,
\end{align*}
where in the last inequality we used the bounds $K_0(\delta \alpha) \geq 4/\pi^2$ for $|\delta \alpha| \leq 1/2$ and $K_0(\delta \alpha) \geq 0$ for $\alpha \in \mathbb{R}$. This proves the necessary estimate. 
\end{proof}

\begin{lemma} \label{lem:approxweightmean}
Fix a non-integer $\theta > 2$ and integers $s \geq w$ with $w \geq s_0+\chi-1$. Fix $0 < \eta < 1$ and let $P$ be a sufficiently large real number. Let $\kappa \gg 1$. Let $I$ be an interval with endpoints $\eta P$ and $P$. Then for any $\epsilon > 0$ one has 
\[ \int_{-\kappa}^{\kappa} |g(\alpha;I)|^{w}d\alpha \ll_{\epsilon} \kappa P^{w\theta/s-\theta+\epsilon}. \]
The implicit constants are independent of $P$ and $\kappa$. 
\end{lemma}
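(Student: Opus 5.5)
The plan is to interpolate between a trivial $L^\infty$ bound for $g(\alpha;I)$ and an even moment of order $2r$ with $2r\ge s_0$. That even moment can be controlled through Lemma \ref{lem:symcount} and Lemma \ref{lem:unweight}.

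\textbf{Step 1: choosing $r$.} Let $2r$ be the smallest even integer with $2r \ge s_0$. We need $w \ge 2r$, and the choice of $\chi$ is designed to give this.
\begin{itemize}
\item If $2<\theta<3$, then $s_0=(\lfloor 2\theta\rfloor+1)(\lfloor 2\theta\rfloor+2)$ is a product of consecutive integers, hence even. So $2r=s_0$, and with $\chi=1$ we have $w\ge s_0=2r$.
\item If $\theta>3$, then $2r\le \lceil s_0\rceil+1$. Since $w$ is an integer with $w\ge s_0+1$, we get $w\ge \lceil s_0\rceil+1\ge 2r$.
\end{itemize}
This parity bookkeeping is the only delicate point, and it is exactly why $\chi$ is defined as it is.

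\textbf{Step 2: reducing to the $2r$-th moment.} Since $-1+\theta/s<0$, every $x\in I$ satisfies $x^{-1+\theta/s}\le (\eta P)^{-1+\theta/s}$. Hence
\[ \sup_\alpha|g(\alpha;I)| \le \sum_{x\in I}x^{-1+\theta/s}\ll_\eta P^{\theta/s}, \]
and therefore
\[ \int_{-\kappa}^{\kappa}|g(\alpha;I)|^{w}\d\alpha \ll P^{(w-2r)\theta/s}\int_{-\kappa}^{\kappa}|g(\alpha;I)|^{2r}\d\alpha . \]
It remains to show that $\int_{-\kappa}^{\kappa}|g(\alpha;I)|^{2r}\d\alpha\ll \kappa P^{2r\theta/s-\theta+\epsilon}$.

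\textbf{Step 3: the even moment.} Take $\delta=(2\kappa)^{-1}$, so that $\Delta=\kappa$.
\begin{itemize}
\item Lemma \ref{lem:symcount} gives $\delta\int_{-\kappa}^{\kappa}|g(\alpha;I)|^{2r}\d\alpha\ll W_r(\delta;I)$.
\item Every solution counted in $W_r(\delta;I)$ has all $2r$ variables in $I$, so its weight is at most $(\eta P)^{2r(-1+\theta/s)}$. Hence $W_r(\delta;I)\ll_\eta P^{2r(-1+\theta/s)}\,W_r^0(\delta;I)$.
\item Since $2r\ge s_0$ and $\kappa\gg1$, Lemma \ref{lem:unweight} gives $W_r^0((2\kappa)^{-1};I)\ll P^{2r-\theta+\epsilon}$.
\end{itemize}
Combining these and multiplying by $\delta^{-1}=2\kappa$ gives
\[ \int_{-\kappa}^{\kappa}|g(\alpha;I)|^{2r}\d\alpha\ll \kappa P^{2r\theta/s-\theta+\epsilon}. \]

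\textbf{Step 4: conclusion.} Inserting this into Step 2 yields
\[ \int_{-\kappa}^{\kappa}|g(\alpha;I)|^{w}\d\alpha \ll \kappa P^{w\theta/s-\theta+\epsilon}, \]
with constants depending only on $\theta,r,\eta,\epsilon$. Whether the endpoints of $I$ are open or closed changes at most one term, so Lemma \ref{lem:unweight} applies to $I$ up to a negligible adjustment.
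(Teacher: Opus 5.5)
Your proposal is correct and is essentially the paper's proof: the trivial bound $|g(\alpha;I)|\ll P^{\theta/s}$ reduces to an even moment, then Lemma~\ref{lem:symcount} with $\delta=(2\kappa)^{-1}$, the pointwise weight bound $P^{2r(-1+\theta/s)}$, and Lemma~\ref{lem:unweight} finish it. The only differences are cosmetic: the paper uses $2u=2\lfloor w/2\rfloor$ instead of the least even integer $\ge s_0$, and it asserts $2u\ge s_0$ without the parity check you supply in Step~1.
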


\begin{proof} The trivial bound shows that $|g(\alpha;I)| \ll P^{\theta/s}$. Let $u = \lfloor w/2 \rfloor $ and $v = w-1-2u$, so that $v = 0$ or $1$ depending on parity. Note that $2u \geq s_0$.  
Using Lemma \ref{lem:symcount} with $\delta = (2\kappa)^{-1}$, one has 
\[ \int_{-\kappa}^{\kappa} |g(\alpha;I)|^{w}d\alpha \ll P^{v\theta/s}\int_{-\kappa}^{\kappa} |g(\alpha;I)|^{2u}d\alpha \ll \kappa P^{v\theta/s} W_u((2\kappa)^{-1};I). \]
For each $\mathbf{x}$ being counted by $W_u((2\kappa)^{-1};I)$, note that $(x_1\cdots x_{2u})^{-1+\theta/s} \ll P^{2u(\theta/s-1)}$, 
so that 
\[ P^{v\theta/s} W_u((2\kappa)^{-1};I) \ll P^{v\theta/s} P^{2u(\theta/s-1)}W^0_u((2\kappa)^{-1};I), \]
where we recall the definition of $W^0_u((2\kappa)^{-1};I)$ from \eqref{eq:diophsym}.
Since $W^0_s((2\kappa)^{-1};I) \ll P^{2u-\theta+\epsilon}$ by Lemma \ref{lem:unweight}, combining this estimate with the above allows us to conclude that 
\[ \int_{-\kappa}^{\kappa} |g(\alpha;I)|^{w}d\alpha \ll \kappa  P^{v\theta/s}P^{2u(\theta/s-1)}P^{2u-\theta+\epsilon} \ll \kappa  P^{w\theta/s-\theta+\epsilon}. \]
Thus, the intended estimates hold. 
\end{proof} 

\begin{lemma} \label{lem:approxfullrange}
Fix a non-integer $\theta > 2$ and integers $s \geq w \geq s_0+\chi-1$. Let $P $ be a real number sufficiently large in terms of $\theta$, $w$ and $s$, and let $1 \ll \kappa$. Then for any $\epsilon > 0$ one has 
\[ \int_{-\kappa}^{\kappa} |g(\alpha;[1,P])|^w\d\alpha \ll_{\epsilon} \kappa P^{\epsilon}. \]
\end{lemma}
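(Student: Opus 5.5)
Split $[1,P]$ dyadically and apply Lemma \ref{lem:approxweightmean} to each block. Let $J = \lceil \log_2 P \rceil$ and, for $0 \le j < J$, set $I_j = (P2^{-j-1}, P2^{-j}]$, so that
\[ g(\alpha;[1,P]) = \sum_{j=0}^{J-1} g(\alpha; I_j), \]
with any leftover integers near $1$ absorbed into the last block. By H\"older's inequality, or simply $|\sum_{j<J} a_j|^w \le J^{w-1}\sum_{j<J}|a_j|^w$, we get
\[ \int_{-\kappa}^{\kappa} |g(\alpha;[1,P])|^w\d\alpha \le J^{w-1}\sum_{j=0}^{J-1}\int_{-\kappa}^{\kappa}|g(\alpha;I_j)|^w\d\alpha. \]
The factor $J^{w-1}\ll (\log P)^{w-1}\ll P^{\epsilon}$ is harmless.

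For each block with $P_j = P2^{-j}$ large, apply Lemma \ref{lem:approxweightmean} with $\eta = 1/2$ and $P$ replaced by $P_j$. This gives
\[ \int_{-\kappa}^{\kappa}|g(\alpha;I_j)|^w\d\alpha \ll \kappa P_j^{w\theta/s-\theta+\epsilon}. \]
The constant depends only on $\theta,w,s,\epsilon,\eta$, uniformly in $P_j$ and $\kappa$. Since $w\le s$, the exponent satisfies $w\theta/s-\theta+\epsilon \le \epsilon$, so each term is $\ll \kappa P^{\epsilon}$. Summing over $J\ll \log P$ blocks gives $\kappa P^{2\epsilon}$ up to logs, and we rescale $\epsilon$. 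In the extreme case $w=s$ the exponent is exactly $\epsilon$, so the bound is still fine. When $w<s$ the geometric sum is even better, but that is not needed.

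The main obstacle is that Lemma \ref{lem:approxweightmean} requires $P_j$ to be sufficiently large, with the threshold depending on $\theta,w,s$. Small blocks, say $P_j \le C$ for a constant $C=C(\theta,w,s)$, therefore need separate treatment. Here we group all $x\le C$ into a single short sum and use the trivial bound $|g(\alpha;[1,C])|\le \sum_{x\le C} x^{-1+\theta/s}\ll_C 1$. Its $w$-th moment over $[-\kappa,\kappa]$ is then $\ll \kappa$, and it enters the H\"older splitting as one additional term. Beyond this, we must check that the implicit constants in Lemma \ref{lem:approxweightmean} (through Lemmas \ref{lem:symcount} and \ref{lem:unweight}) are uniform in the scale $P_j$ and in $\kappa$, which the statements assert. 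With $\kappa\gg 1$ fixed across blocks, this completes the proof.
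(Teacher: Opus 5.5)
Your proposal is correct and follows essentially the same route as the paper. Both arguments split $[1,P]$ dyadically, apply the elementary H\"older bound $|\sum_j a_j|^w \le J^{w-1}\sum_j |a_j|^w$ at a cost of only a power of $\log P$, use Lemma~\ref{lem:approxweightmean} with $\eta=1/2$ on each large block (where $w\le s$ makes the exponent at most $\epsilon$), and use the trivial bound on the boundedly many blocks near $1$. One small point of precision: so that every large block has exactly the form $(P_j/2,P_j]$, stop the dyadic blocks once $P_j/2$ drops below $C$, and put the remaining range $[1,2C)$ into the trivially bounded piece rather than cutting a block at $C$.
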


\begin{proof} 
Let $M = \lfloor \log_2(P)\rfloor$. Observe that 
\[ g(\alpha; [1,P]) = \sum_{m=0}^{M } g(\alpha; (2^{-m-1}P,2^{-m}P]). \] 
Using Lemma \ref{lem:approxweightmean} when $2^{-m}P$ is large and a trivial bound when $2^{-m}P$ is small, we have that 
\[ \int_{-\kappa}^{\kappa} |g(\alpha; (2^{-m-1}P,2^{-m}P])|^{w}d\alpha \ll \kappa(2^{-m}P)^{w\theta/s-\theta+\epsilon}. \]
Hence, for any $\epsilon >0$ and $s_0+\chi-1 \leq w \leq s$, we have by H\"{o}lder's Inequality that 
\begin{align*} \int_{-1/2}^{1/2} |g(\alpha;[1,P])|^{w}d\alpha & = \int_{-\kappa}^{\kappa} \Big|  \sum_{m=0}^{M } g(\alpha; (2^{-m-1}P,2^{-m}P]) \Big|^{w} \d\alpha  
\\ &\leq M^{w-1} \sum_{m=0}^{M} \int_{-\kappa}^{\kappa} |g(\alpha; (2^{-m-1}P,2^{-m}P])|^{w}\d\alpha 
\\ & \ll \kappa M^{w-1}\sum_{m=0}^{M} (P/2^{m})^{w\theta/s-\theta+\epsilon/2} \ll \kappa M^{w}P^{\epsilon/2}. 
\end{align*}
The claim then follows since $M^w P^{\epsilon/2} \ll_{\epsilon} P^{\epsilon}$. 
\end{proof}

Let $\eta_1 = s^{-1/\theta}$ and $\eta_2 = 2^{1/\theta}$. For positive $\Lambda \in \mathbb{R}$, define $Y = \Lambda^{1/\theta}$ and $Y_s = \Lambda^{\frac{1}{s\theta}}$.  

\begin{lemma} \label{lem:approxsmallmean}
Let $\theta > 2$ and $s \geq s_0+\chi$. Let $I_1 = [1, Y_s]$ and $I_s = [\eta_1\eta_2^{-1}Y,Y]$ while $I_m = [1,Y]$ for $m = 2,\ldots, s-1$. For any fixed $\tau > 0$ and $\epsilon > 0$ and for sufficiently large $Y$ one has 
\[ U_{\tau/2,s}(\Lambda; I_1,\ldots, I_s) \ll \Lambda^{-\frac{1}{s^2(s-1)}+\epsilon}. \]
\end{lemma}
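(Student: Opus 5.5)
The plan is to apply the second estimate of Lemma \ref{lem:count} with $m=s$, $Z=\Lambda$, $\delta=\tau/2$ (so $\Delta=\tau^{-1}$ is a fixed constant $\gg 1$), and with the distinguished index $j=1$. This gives
\[ U_{\tau/2,s}(\Lambda;I_1,\ldots,I_s) \ll \|g(\cdot\,;I_1)\|_\infty \prod_{i=2}^{s}\Big(\int_{-\Delta}^{\Delta}|g(\alpha;I_i)|^{s-1}\d\alpha\Big)^{\frac{1}{s-1}}. \]
It then remains to bound four kinds of factors separately: the sup norm on the short interval $I_1$, the $(s-1)$st moments on the middle intervals $I_2,\ldots,I_{s-1}$, and the $(s-1)$st moment on the long interval $I_s$. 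Throughout, $w=s-1\geq s_0+\chi-1$ is admissible in Lemmas \ref{lem:approxweightmean} and \ref{lem:approxfullrange}.

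\emph{The sup norm.} I use the trivial bound
\[ \|g(\cdot\,;I_1)\|_\infty \leq \sum_{x\leq Y_s} x^{-1+\theta/s} \ll Y_s^{\theta/s} = \Lambda^{1/s^2}. \]

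\emph{The middle intervals.} For $2\le m\le s-1$ we have $I_m=[1,Y]$, so Lemma \ref{lem:approxfullrange} with $P=Y$ and $\kappa=\Delta$ gives
\[ \int_{-\Delta}^{\Delta}|g(\alpha;I_m)|^{s-1}\d\alpha \ll Y^{\epsilon}\ll\Lambda^{\epsilon}. \]
Each of these factors therefore contributes only $\Lambda^{\epsilon}$.

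\emph{The long interval.} This is the saving factor. The interval $I_s$ has endpoints $\eta P$ and $P$, with $P=Y$ and $\eta=\eta_1\eta_2^{-1}=(2s)^{-1/\theta}\in(0,1)$ fixed. Lemma \ref{lem:approxweightmean} with $w=s-1$ then yields
\[ \int_{-\Delta}^{\Delta}|g(\alpha;I_s)|^{s-1}\d\alpha \ll Y^{(s-1)\theta/s-\theta+\epsilon} = \Lambda^{-1/s+\epsilon}. \]
After taking the $(s-1)$st root, this factor is $\Lambda^{-\frac{1}{s(s-1)}+\epsilon}$.

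\emph{Combining.} Multiplying the pieces gives the exponent
\[ \frac{1}{s^2}-\frac{1}{s(s-1)} = \frac{(s-1)-s}{s^2(s-1)} = -\frac{1}{s^2(s-1)}, \]
plus $O(\epsilon)$, and rescaling $\epsilon$ gives the claim.

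The choice $j=1$ is the crucial point. The sup norm should fall on the shortest interval, whose trivial bound $\Lambda^{1/s^2}$ is smaller than the saving $\Lambda^{1/(s(s-1))}$ coming from $I_s$. Any other choice of $j$ would cost a factor $\Lambda^{1/s}$ and destroy the decay.

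The only technical care needed is checking that the hypotheses of the mean value lemmas hold uniformly. Here $\kappa=\tau^{-1}$ is a constant, which is allowed, and the closed/half-open nature of the endpoints is harmless. The point where care matters most is that $\eta$ is fixed independently of $\Lambda$, which it is.
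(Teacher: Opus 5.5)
Your proposal is correct and follows the paper's proof essentially verbatim: the second estimate of Lemma \ref{lem:count} with $j=1$, the trivial bound $\|g(\cdot\,;I_1)\|_\infty \ll Y_s^{\theta/s}=\Lambda^{1/s^2}$, Lemma \ref{lem:approxfullrange} on the middle intervals, and Lemma \ref{lem:approxweightmean} with $w=s-1$ on $I_s$, combining to the exponent $1/s^2-1/(s(s-1))=-1/(s^2(s-1))$. Your parametrization $P=Y$, $\eta=\eta_1\eta_2^{-1}=(2s)^{-1/\theta}$ for $I_s$ is the one that actually matches the interval; the paper writes $P=\eta_2 Y$, which is a harmless slip.
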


\begin{proof} Let $g_i(\alpha) = g(\alpha;I_i)$ for each $i$. Note that $g_2(\alpha) = \cdots = g_{s-1}(\alpha)$. By Lemma \ref{lem:count} we have 
\[ U_{\tau/2,s}(Y; I_1,\ldots, I_s) \ll \tau \| g_1\|_{\infty} \Big( \int_{-\tau^{-1}}^{\tau^{-1}} |g_2(\alpha)|^{s-1}\d\alpha \Big)^{\frac{s-2}{s-1}}\Big( \int_{-\tau^{-1}}^{\tau^{-1}} |g_s(\alpha)|^{s-1}\d\alpha \Big)^{\frac{1}{s-1}}. \]
By a trivial bound, one has $g_1(\alpha) \ll Y_s^{\theta/s} \ll Y^{\theta/s^2}$.
Next, note that 
$\eta_1\eta_2^{-1} = (2s)^{-1/\theta} $ depends only on $s$ and $\theta$, and $s-1 \geq s_0+\chi-1$. Thus, we may apply Lemma \ref{lem:approxweightmean} with $w = s-1$, $P = \eta_2Y$, $\eta = \eta_1\eta_2^{-1}$ and $\kappa = \tau^{-1}$ to infer that 
\[ \int_{-\tau^{-1}}^{\tau^{-1}} |g_s(\alpha)|^{s-1}d\alpha  \ll_{\epsilon} \tau^{-1}Y^{-\theta/s+\epsilon}, \] 
where the implicit constants depend only on $s$, $\theta$ and $\epsilon$. Noting that $s-1 \geq s_0+\chi-1$, we may apply Lemma \ref{lem:approxfullrange} with $w = s-1$ to deduce the bound 
\[ \int_{-\tau^{-1}}^{\tau^{-1}} |g_2(\alpha)|^{s-1}\d\alpha \ll_{\epsilon} \tau^{-1}Y^{\epsilon}. \]

With these estimates, we may conclude that for any $\epsilon > 0$ one has 
\[ U_{\tau/2,s}(\Lambda; I_1,\ldots, I_s) \ll \tau Y^{\theta/s^2} (\tau^{-1}Y^{\epsilon})^{\frac{s-2}{s-1}}(\tau^{-1}Y^{-\theta/s+\epsilon})^{\frac{1}{s-1}} \ll \Lambda^{-\frac{1}{s^2(s-1)}+\epsilon}, \]
where in the last step we note that $Y^\theta = \Lambda$. This completes the proof. 
\end{proof}

For $\Lambda_0 \in \mathbb{R}$ with $\Lambda^{1/s} \ll \Lambda_0 \ll \Lambda$, 
define $Y_0 = \Lambda_0^{1/\theta}$. For an integer $1 \leq q \leq s-1$, put $\tilde{\eta}_q = (2(s-q))^{-1/\theta}$. 
\begin{lemma} \label{lem:approxderivmean}
Fix $\tau > 0$, and choose a non-integer $\theta > 2$ and an integer $s \geq s_0+\chi$. Let $m = s-q$ and set $I_i = [1,Y_0]$ for $i = 1,\ldots,m-1$ and $I_m = (\tilde{\eta_q}Y_0,Y_0] $. Then for all large $\Lambda$ we have 
\[ U_{\tau/2,s}(\Lambda_0;I_1,\ldots,I_m) \ll \Lambda^{-\frac{1}{s^2(s-1)}+\epsilon}. \]
\end{lemma}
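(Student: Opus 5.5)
We mirror the proof of Lemma \ref{lem:approxsmallmean}, now with $m = s-q \le s-1$ variables, so that the first bound of Lemma \ref{lem:count} applies. Write $g_i(\alpha) = g(\alpha;I_i)$ and take $\delta = \tau/2$, so $\Delta = \tau^{-1}$. Lemma \ref{lem:count} gives
\[ U_{\tau/2,m}(\Lambda_0;I_1,\ldots,I_m) \ll \tau\,\tau^{-\frac{s-m-1}{s-1}} \Big(\int_{-\tau^{-1}}^{\tau^{-1}} |g_1(\alpha)|^{s-1}\d\alpha\Big)^{\frac{m-1}{s-1}} \Big(\int_{-\tau^{-1}}^{\tau^{-1}} |g_m(\alpha)|^{s-1}\d\alpha\Big)^{\frac{1}{s-1}}, \]
using $g_1 = \cdots = g_{m-1}$. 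Since $\tau$ is fixed, all powers of $\tau$ can be absorbed into the implied constant.

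Since $s-1 \ge s_0+\chi-1$, we bound the two integrals as follows.
\begin{enumerate}
\item For the full-range sum $g_1 = g(\cdot;[1,Y_0])$, Lemma \ref{lem:approxfullrange} with $w = s-1$, $P = Y_0$ and $\kappa = \tau^{-1}$ gives a bound $\ll \tau^{-1}Y_0^{\epsilon}$.
\item For $g_m$, the interval $(\tilde\eta_q Y_0, Y_0]$ has endpoints $\eta P$ and $P$ with $P = Y_0$ and $\eta = \tilde\eta_q$, which depends only on $s,\theta,q$. Lemma \ref{lem:approxweightmean} with $w = s-1$ then gives $\ll \tau^{-1}Y_0^{(s-1)\theta/s-\theta+\epsilon} = \tau^{-1}Y_0^{-\theta/s+\epsilon}$.
\end{enumerate}
Here $Y_0$ is large because $\Lambda_0 \gg \Lambda^{1/s}$ and $\Lambda$ is large.

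Combining, we get
\[ U \ll Y_0^{\epsilon}\,Y_0^{-\frac{\theta}{s(s-1)}} = \Lambda_0^{-\frac{1}{s(s-1)}+\epsilon}. \]
Using $\Lambda_0 \gg \Lambda^{1/s}$, this is $\ll \Lambda^{-\frac{1}{s^2(s-1)}+\epsilon}$, after renaming $\epsilon$ and using $\Lambda_0 \ll \Lambda$ to control the $\epsilon$-power.

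The main point to check is that the saving comes only from the single restricted variable $x_m$ in the dyadic-type range. Lemma \ref{lem:approxweightmean} needs $w = s-1 \ge s_0+\chi-1$ and a fixed $\eta$; both hold here. The exponent $\frac{1}{s-1}$ that Hölder attaches to the $g_m$ factor is what produces the loss from $\theta/s$ to $\theta/(s(s-1))$. The remaining care is notational: the statement writes $U_{\tau/2,s}$, but the count is over $m$ variables, so we read it as $U_{\tau/2,m}$. Also, $m \ge 1$ since $q \le s-1$; if $m = 1$ there are no full-range factors, the first integral is absent, and the bound only improves.
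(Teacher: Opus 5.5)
Your proposal is correct and follows the paper's proof essentially step for step: you apply the first (H\"older) bound of Lemma \ref{lem:count}, use Lemma \ref{lem:approxfullrange} with $w=s-1$ for the $m-1$ full-range factors and Lemma \ref{lem:approxweightmean} with $\eta=\tilde\eta_q$ for the restricted variable, and then convert via $\Lambda^{1/s}\ll\Lambda_0\ll\Lambda$. The only difference is cosmetic. You first pass to $\Lambda_0^{-1/(s(s-1))+\epsilon}$, whereas the paper first rewrites $Y_0^{-\theta/s}\ll\Lambda^{-1/s^2}$. Your reading of $U_{\tau/2,s}$ as $U_{\tau/2,m}$ and your remark on the case $m=1$ are both fine.
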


\begin{proof} Set $g_i(\alpha) = g(\alpha;I_i)$ for each $i$, so that $g_1(\alpha) = \cdots = g_{m-1}(\alpha)$. By Lemma \ref{lem:count} we have 
\[ U_{\tau/2,s}(\Lambda_0;I_1,\ldots,I_m) \ll \tau^{1-\frac{s-m-1}{s-1}}\Big( \int_{-\tau^{-1}}^{\tau^{-1}} |g_1(\alpha)|^{s-1}\d\alpha \Big)^{\frac{m-1}{s-1}}\Big( \int_{-\tau^{-1}}^{\tau^{-1}} |g_m(\alpha)|^{s-1}\d\alpha\Big)^{\frac{1}{s-1}}. \]

Now, $\tilde{\eta}_q = (2(s-q))^{-1/\theta} \geq (2s)^{-1/\theta}$, and depends only on $s, q$ and $\theta$. Since $s-1 \geq s_0+\chi-1$, we may then apply Lemma \ref{lem:approxweightmean} with $w = s-1$, $\eta = \tilde{\eta}_q$, $P = Y_0$ and $\kappa = \tau^{-1}$ to deduce that
\[  \int_{-\tau^{-1}}^{\tau^{-1}} |g_m(\alpha)|^{s-1}\d\alpha \ll \tau^{-1}Y_0^{-\theta/s+\epsilon}. \]
We also apply Lemma \ref{lem:approxfullrange} with $w = s-1$ and the bound $Y_0 \ll \Lambda$ to observe that 
\[ \int_{-\tau^{-1}}^{\tau^{-1}} |g_1(\alpha)|^{s-1}\d\alpha \ll \tau^{-1}Y_0^{\epsilon} \ll \tau^{-1}\Lambda^{\epsilon}. \]
Since $\Lambda^{\frac{1}{s\theta}} \ll Y_0 \ll \Lambda $, we have that $Y_0^{-\theta/s+\epsilon} \ll \Lambda^{-1/s^2+\epsilon}$. Hence, for any $\epsilon > 0$ we have 
\[ U_{\tau/2,s}(\Lambda_0; I_1,\ldots, I_m) \ll \tau^{1-\frac{s-m-1}{s-1}}(\tau^{-1}\Lambda^{\epsilon})^{\frac{m-1}{s-1}}(\tau^{-1}\Lambda^{-1/s^2+\epsilon})^{\frac{1}{s-1}} \ll \Lambda^{-\frac{1}{s^2(s-1)}+\epsilon}. \]
\end{proof}

\section{The Davenport-Heilbronn Method} \label{sec:DH}
This section is devoted to proving some necessary estimates on a weighted Diophantine counting problem. Our approach is based on \cite{WoolThin}. We will make use of Freeman's refined version of this method; the reader may consult \cite{FreemanLB}, \cite{FreemanAsymp}, \cite{Poulias1} or \cite{WooleyDHF} for more information on this. See also Chapter 20 in \cite{DavBook}. Recall for $0 < \Lambda \in \mathbb{R}$ the definitions $Y = \Lambda^{1/\theta}$ and $Y_s = \Lambda^{\frac{1}{s\theta}}$. 
Let $I_1 = \cdots = I_{s-1} = (Y_s,Y]$ and let $I_s = (c_0Y, Y]$, where $c_0 = \eta_1\eta_2^{-1}$. Let $\mathcal{T} = I_1\times \cdots \times I_s$ and recall that 
\[ U_{s,\delta}(\Lambda; \mathcal{T}) = \sum_{\substack{ (x_1,\ldots,x_s) \in \mathcal{T} \\ |x_1^{\theta}+\cdots+x_s^{\theta}-\Lambda|<\delta }} (x_1\cdots x_s)^{-1+\theta/s}. \]
The main result of this section is the following. 

\begin{theorem} \label{thm:DH} 
Fix $\tau > 0$. Let $\theta > 2$ and let $s \geq s_0+\chi$. There exist positive constants $ \Upsilon_1(s,\theta) $ and $\Upsilon_2(s,\theta)$ such that for all sufficiently large real $\Lambda > 0$ one has 
\[ \tau\Upsilon_1(s,\theta) \leq U_{\tau/2,s}(N,\mathcal{T})\leq \tau\Upsilon_2(s,\theta). \]
\end{theorem}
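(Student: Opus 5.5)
We will prove Theorem \ref{thm:DH} with a smoothed Davenport--Heilbronn argument in Freeman's form, following the scheme of Section 3 of \cite{WoolThin}. Write $g_i(\alpha) = g(\alpha; I_i)$ and $F(\alpha) = g_1(\alpha)\cdots g_s(\alpha)$.

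\textbf{Kernels.} For the upper bound we reuse the kernel argument of Lemma \ref{lem:count}. For the lower bound we use the Fourier pair $K_0,\Phi$ from that proof, rescaled so that the resulting $\Phi$ minorizes the indicator of $(-\tau/2,\tau/2)$; for instance take $\Phi(2x/\tau)$ with support $|x|<\tau/2$. This gives
\[ U_{\tau/2,s}(\Lambda;\mathcal{T}) \geq \frac{\tau}{2}\int_{\mathbb{R}} F(\alpha)e(-\Lambda\alpha)K_0(\tau\alpha/2)\d\alpha \]
and an analogous upper bound with a majorant kernel. Both kernels satisfy $K(\alpha) \ll \min(1,\tau^{-2}\alpha^{-2})$.

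\textbf{Dissection.} We split $\mathbb{R}$ into three parts:
\begin{itemize}
\item a major arc $\mathfrak{M} = \{|\alpha| \le Y^{-\theta+\nu}\}$ for a small $\nu>0$;
\item minor arcs $\mathfrak{m} = \{Y^{-\theta+\nu} < |\alpha| \le T\}$, with $T$ a power of $Y$;
\item trivial arcs $\mathfrak{t} = \{|\alpha|>T\}$.
\end{itemize}

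\textbf{Trivial arcs.} Use the trivial bound $|F| \ll \prod_i Y^{\theta/s} \ll \Lambda$ together with the decay of the kernel. This gives a contribution $\ll \Lambda\tau^{-1}T^{-1}$, which is $o(\tau)$ once $T = \Lambda^{2}$, say.

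\textbf{Major arc.} Replace each $g_i$ by its integral analogue
\[ v_i(\alpha) = \int_{I_i} t^{-1+\theta/s}e(\alpha t^\theta)\d t \]
using Euler--Maclaurin or partial summation. The error per factor is $\ll (1+|\alpha|Y^\theta)$ times a boundary weight, which is negligible on $\mathfrak{M}$ after multiplication by $|\mathfrak{M}|$. The substitution $u_i = t^{\theta}$ turns $v_i$ into
\[ \theta^{-1}\int u^{1/s-1}e(\alpha u)\d u. \]
The completed singular integral is then the density at $\Lambda$ of the convolution of $s$ densities $u^{1/s-1}$, restricted to the ranges $u_i\in(Y_s^\theta, \Lambda]$ and $u_s \in (c_0^\theta\Lambda,\Lambda]$. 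This convolution is homogeneous of degree $0$, being $\int u_1^{1/s-1}\cdots u_s^{1/s-1}\,\d\sigma$ over the simplex $u_1+\dots+u_s = \Lambda$. It therefore equals a positive constant (a Dirichlet integral restricted to the region defined by $c_0$) up to $o(1)$; the cutoff $u_i > \Lambda^{1/s}$ removes only an $o(1)$ portion. The choice $c_0^\theta = 1/(2s)$ guarantees that this region has positive measure: points near $u_s=\Lambda/2$ with the remaining $u_i$ balanced are admissible. Tails of the singular integral outside $\mathfrak{M}$ are handled with $v_i(\alpha)\ll |\alpha|^{-1/s}$ coming from the bottom of the range. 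Since the kernel equals $1+O(\tau|\alpha|)$ on $\mathfrak{M}$, the major arc contributes $\tau(\mathcal{J}+o(1))$ with $\mathcal{J}>0$.

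\textbf{Minor arcs (main obstacle).} We need
\[ \int_{\mathfrak{m}}|F(\alpha)|K(\alpha)\d\alpha = o(1). \]
We would first prove a Weyl-type estimate for non-integer $\theta$ on the dyadic block $I_s$: for $Y^{-\theta+\nu}\le|\alpha|\le T$,
\[ \sup|g_s(\alpha)| \ll Y^{\theta/s-\sigma} \]
for some $\sigma>0$. This follows from van der Corput's $k$-th derivative test after partial summation, since $\alpha x^\theta$ has no polynomial structure and its $j$-th derivative has size $|\alpha|Y^{\theta-j}$. For large $|\alpha|$ we dyadically decompose $[-T,T]$ and use the kernel decay.

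Then apply H\"older as in the second part of Lemma \ref{lem:count}:
\[ \int|F|K \le \sup_{\mathfrak{m}}|g_s| \cdot \prod_{i<s}\Big(\int |g_i|^{s-1}K\Big)^{1/(s-1)}. \]
By Lemma \ref{lem:approxfullrange} with $w=s-1 \ge s_0+\chi-1$, each factor in the product is $\ll \Lambda^{\epsilon}$, where the dyadic pieces of $\alpha$ are weighted by $K$. This saving against a $Y^\epsilon$ loss yields $o(1)$.

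The delicate point is the range of $\alpha$ just above $Y^{-\theta+\nu}$, where van der Corput is weakest. There we instead use integration by parts or the first derivative test on $g_s$: because $x\sim Y$, the derivative of the phase has size $\asymp|\alpha|Y^{\theta-1}$. Combining this with a pointwise bound for $g_i$ over the full range would give savings. Because of this, we might enlarge $\mathfrak{M}$ to $|\alpha|\le Y^{-\theta+\nu}$ and handle the intermediate region via the integral approximation $v_i$, whose decay $|\alpha|^{-1/s}$ for each factor gives $|F|\ll |\alpha|^{-1}$ with only logarithmic loss.

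Combining the three contributions gives $U_{\tau/2,s}(\Lambda;\mathcal{T}) = \tau(\mathcal{J}+o(1))$ for both kernels, with $\mathcal{J}$ depending only on $s,\theta$. This yields the constants $\Upsilon_1,\Upsilon_2$.
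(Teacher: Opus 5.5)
\textbf{The minor-arc step fails as written.} Your overall plan parallels the paper. Using the non-sharp pair $K_0,\Phi$ instead of Freeman's $K^{\delta}_{\pm}$ is fine, since only two-sided bounds are claimed. You then get $\tfrac{\tau}{2}\mathcal{J}(1+o(1))\le U_{\tau/2,s}(\Lambda;\mathcal{T})\le C\tau\mathcal{J}(1+o(1))$ rather than $U=\tau(\mathcal{J}+o(1))$. Your major-arc treatment, including positivity of $\mathcal{J}$ via a fixed region of positive measure, is also fine.

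The gap is on the minor arcs. Your H\"older split puts $\sup_{\mathfrak m}|g_s|\ll Y^{\theta/s-\sigma}$ outside and bounds the other factors by $\prod_{i<s}\big(\int|g_i|^{s-1}K\big)^{1/(s-1)}\ll Y^{\epsilon}$ using Lemma \ref{lem:approxfullrange}. That only gives
\[ \int_{\mathfrak m}|F|K\ll Y^{\theta/s-\sigma+\epsilon}, \]
which is a positive power of $Y$. The trivial size of $g_s$ is $Y^{\theta/s}$, and the uniform Weyl saving over $\mathfrak m$ is smaller than $\theta/s$; the paper has $\sigma\le\nu/4=1/(4s)$. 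No better pointwise bound can rescue this split. For $|\alpha|$ just above $Y^{-\theta+\nu}$, the endpoint terms of $g_s^*$ force $|g_s(\alpha)|\gg Y^{\theta/s-\nu}$, a positive power whenever $\nu<\theta/s$ (for example $\nu=1/s$). In effect you weigh the saving $Y^{-\sigma}$ against something of size $1$, but the full-range mean values you invoke are of size $Y^{\epsilon}$, not $Y^{-\theta/s+\epsilon}$.

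\textbf{The missing ingredient} is the power-saving $(s-1)$-th moment of the sum over the long interval $I_s=(c_0Y,Y]$. Lemma \ref{lem:approxweightmean} with $w=s-1$ gives $\int_{-\kappa}^{\kappa}|g_s|^{s-1}\d\alpha\ll\kappa Y^{-\theta/s+\epsilon}$. This is exactly where the mean value estimate of \cite{BharMVE} and the restriction $x_s>c_0Y$ enter. The pointwise bound must be used inside an $s$-th moment of $g_s$, as in Lemma \ref{lem:approxminorarcs}:
\[ \int_{\mathfrak m}|g_1^{s-1}g_s|K\le\Big(\int|g_1|^{s}K\Big)^{\frac{s-1}{s}}\Big(\sup_{\mathfrak m}|g_s|\int|g_s|^{s-1}K\Big)^{\frac1s}\ll Y^{\epsilon}\big(Y^{\theta/s-\sigma}\cdot Y^{-\theta/s+\epsilon}\big)^{\frac1s}. \]
This is $\ll Y^{-\sigma/s+2\epsilon}$. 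Here Lemma \ref{lem:approxfullrange} is applied with $w=s$, and the $\alpha$-integrals are summed dyadically against the decay of $K$.

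\textbf{Two smaller points.} First, taking $T=\Lambda^2$ forces you to prove a Weyl bound for $|\alpha|$ up to $Y^{2\theta}$, beyond the range $|\alpha|\le X$ of Lemma \ref{lem:weyl}. Van der Corput does extend there, with derivative order about $3\theta$ and a smaller $\sigma$. It is simpler to cut at $Y^{\mu}$ and bound $|\alpha|>Y^{\mu}$ by kernel decay plus mean values, as in Lemma \ref{lem:trivial}. Second, the range just above $Y^{-\theta+\nu}$ that you flag needs no separate treatment. The case $q=1$ of Lemma \ref{lem:weyl} already gives the saving $\nu/4$ there.
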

To initiate our proof of Theorem \ref{thm:DH}, define the weighted exponential sums 
\[ g_1(\alpha) = \sum_{x \in I_1} x^{-1+\theta/s}e(\alpha x^{\theta}) \quad \text{ and } \quad g_2(\alpha) = \sum_{x \in I_s} x^{-1+\theta/s}e(\alpha x^{\theta}). \]
We need a certain kernel function with some predefined properties. For $\delta > 0$, let $\tilde{\delta} = \delta\log(Y)^{-1}$. Define the kernel functions $K^{\delta}_+(\alpha)$ and $K^{\delta}_-(\alpha)$ by 
\begin{align} \label{eq:kerform}
K^{\delta}_{\pm}(\alpha) = \frac{\sin(\pi\alpha\tilde{\delta})\sin(\pi\alpha(2\delta\pm\tilde{\delta}))}{\pi^2 \alpha^2 \tilde{\delta}} = (2\delta\pm\tilde{\delta})\frac{\sin(\pi \alpha \tilde{\delta})}{\pi\alpha\tilde{\delta}} \cdot \frac{\sin(\pi\alpha(2\delta\pm \tilde{\delta}))}{\pi \alpha (2\delta\pm\tilde{\delta})}.
\end{align} 
Define the functions $\psi_+(\xi)$ and $\psi_-(\xi)$ by 
\[ \psi_{\pm}(\xi) = \int_{\mathbb{R}} e(\xi\alpha)K^{\tau}_{\pm}(\alpha)\d\alpha. \]
For measurable $B \subseteq \mathbb{R}$, let $\chi_{B}$ denote the indicator function of $B$. One has the following lemma. 
\begin{lemma} \label{lem:kernel}
Fix $\delta > 0$. The following hold: 
\begin{align} \label{eq:kerbound}
K^{\delta}_{\pm}(\alpha) \ll_{\delta} \min\{1, |\alpha|^{-1},\log(Y)|\alpha|^{-2} \},
\end{align}
\begin{align} \label{eq:kerasymp}
K^{\delta}_{\pm}(\alpha) = 2\delta+O(\log(Y)^{-1}) 
\end{align}
with \eqref{eq:kerasymp} holding for $|\alpha| < Y^{-\beta}$ for any fixed $\beta > 0$. Moreover, one has
\[ \chi_{(-\delta+\tilde{\delta},\delta-\tilde{\delta})} (\xi) \leq \psi_-(\xi) \leq \chi_{(-\delta,\delta)}(\xi) \leq \psi_+(\xi) \leq \chi_{(-\delta-\tilde{\delta},\delta+\tilde{\delta})}(\xi). \]
Further, $|\psi_{\pm}(\xi)-\chi_{(-\delta,\delta)}(\xi)| = 0$ whenever $||\xi|-\delta|>\tilde{\delta}$ and is at most $1$ when $||\xi|-\delta|\leq \tilde{\delta}$. 
\end{lemma}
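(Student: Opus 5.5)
The argument is elementary Fourier analysis of the product of two sinc factors, and I would split it into the pointwise kernel bounds and the sandwich statement for $\psi_\pm$.

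\textbf{Pointwise bounds on $K^{\delta}_{\pm}$.} Start from the second expression in \eqref{eq:kerform}, which writes
\[ K^{\delta}_{\pm}(\alpha)=(2\delta\pm\tilde\delta)\operatorname{sinc}(\alpha\tilde\delta)\operatorname{sinc}(\alpha(2\delta\pm\tilde\delta)). \]
Since $|\operatorname{sinc}|\le 1$ and $2\delta\pm\tilde\delta\asymp\delta$ once $Y$ is large, we get $K^\delta_\pm\ll_\delta 1$. For the decay bounds:
\begin{itemize}
\item Bounding $|\sin(\pi\alpha\tilde\delta)|\le 1$ in the first expression gives $K^\delta_\pm \ll (\pi|\alpha|\tilde\delta)^{-1}\cdot\pi|\alpha|\tilde\delta\,\cdot$ \ldots; more simply, keep the first sinc factor $\le 1$ and use $|\operatorname{sinc}(\alpha(2\delta\pm\tilde\delta))|\ll_\delta |\alpha|^{-1}$. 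This gives $K^\delta_\pm\ll_\delta|\alpha|^{-1}$.
\item Bounding both sines by $1$ in the first expression gives $|K^\delta_\pm|\le (\pi^2\alpha^2\tilde\delta)^{-1}=\log(Y)/(\pi^2\delta\alpha^2)$. This is the $\log(Y)|\alpha|^{-2}$ bound.
\end{itemize}

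\textbf{Asymptotic \eqref{eq:kerasymp}.} For $|\alpha|<Y^{-\beta}$, use $\operatorname{sinc}(t)=1+O(t^2)$. This gives
\[ K^\delta_\pm(\alpha)=(2\delta\pm\tilde\delta)\bigl(1+O(Y^{-2\beta})\bigr). \]
Since $\tilde\delta=\delta/\log Y$ and $Y^{-2\beta}\ll\log(Y)^{-1}$, this is $2\delta+O(\log(Y)^{-1})$.

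\textbf{Fourier side.} The first factor is the Fourier transform of $\tilde\delta^{-1}\chi_{[-\tilde\delta/2,\tilde\delta/2]}$, which has total mass $1$. The second factor is the transform of $\chi_{[-(2\delta\pm\tilde\delta)/2,(2\delta\pm\tilde\delta)/2]}$. Concretely, $\int_{\mathbb R}e(\xi\alpha)\sin(\pi\alpha L)/(\pi\alpha)\,d\alpha=\chi_{(-L/2,L/2)}(\xi)$ at continuity points.

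The product $K^\delta_\pm$ is integrable because of the $\alpha^{-2}$ decay. By the convolution theorem (Fourier inversion applies since everything is in $L^1\cap L^2$), $\psi_\pm$ is the convolution
\[ \psi_\pm=\tilde\delta^{-1}\chi_{[-\tilde\delta/2,\tilde\delta/2]}*\chi_{[-\delta\mp\tilde\delta/2,\ \delta\pm\tilde\delta/2]}. \]
This requires taking $\psi_\pm$ with $K^\delta_\pm$, i.e.\ reading $K^\tau_\pm$ in the definition with $\tau$ replaced by $\delta$. The convolution is a trapezoid: it takes values in $[0,1]$, is continuous, and is piecewise linear.

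\textbf{Reading off the sandwich.}
\begin{itemize}
\item For $\psi_+$, the big interval is $[-\delta-\tilde\delta/2,\delta+\tilde\delta/2]$. The convolution equals $1$ for $|\xi|\le\delta$ and equals $0$ for $|\xi|\ge\delta+\tilde\delta$. This gives $\chi_{(-\delta,\delta)}\le\psi_+\le\chi_{(-\delta-\tilde\delta,\delta+\tilde\delta)}$.
\item For $\psi_-$, the big interval is $[-\delta+\tilde\delta/2,\delta-\tilde\delta/2]$. The convolution equals $1$ for $|\xi|\le\delta-\tilde\delta$ and vanishes for $|\xi|\ge\delta$. This gives $\chi_{(-\delta+\tilde\delta,\delta-\tilde\delta)}\le\psi_-\le\chi_{(-\delta,\delta)}$.
\end{itemize}
The final claim follows directly. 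Outside the window $||\xi|-\delta|\le\tilde\delta$, both $\psi_\pm$ and $\chi_{(-\delta,\delta)}$ equal $0$ or $1$ together. Inside the window, all of these functions take values in $[0,1]$, so their difference is at most $1$.

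The main care point is the convolution identity: getting the normalizations and the factor $2\delta\pm\tilde\delta$ right, and justifying inversion of the conditionally integrable sinc transforms. I would avoid the latter by computing directly
\[ \int_{\mathbb R}\Bigl(\tilde\delta^{-1}\chi_{[-\tilde\delta/2,\tilde\delta/2]}*\chi_{[-L/2,L/2]}\Bigr)(x)\,e(-x\alpha)\,dx=K^\delta_\pm(\alpha), \]
and then applying Fourier inversion to this continuous $L^1$ function, whose transform is also $L^1$.
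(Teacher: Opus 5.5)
Your proposal is correct and is essentially the standard argument behind the references the paper cites in place of a proof (Lemma 1 of Freeman, Lemma 2.1 of Poulias, Lemma 3.1 of \cite{Bharloc}): write $K^{\delta}_{\pm}$ as the Fourier transform of the trapezoid $\tilde\delta^{-1}\chi_{[-\tilde\delta/2,\tilde\delta/2]}*\chi_{[-(2\delta\pm\tilde\delta)/2,(2\delta\pm\tilde\delta)/2]}$, invert, and read off the sandwich, with the pointwise bounds and the asymptotic for $|\alpha|<Y^{-\beta}$ coming straight from the sinc factorization. You are also right that $\psi_\pm$ must be built from $K^{\delta}_{\pm}$ rather than $K^{\tau}_{\pm}$, and the only implicit requirement, harmless here, is that $Y$ be large enough that $\tilde\delta<\delta$, so that the box defining $\psi_-$ has positive width at least $\tilde\delta$.
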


\begin{proof} See Lemma 3.1 and the discussion following it in \cite{Bharloc}. See also Lemma 1 in \cite{FreemanAsymp}, Lemma 2.1 in \cite{Poulias1} and Section 3.2.1 in \cite{Poulthesis}. 
\end{proof}

For measurable $\mathfrak{B} \subseteq \mathbb{R}$ and $\delta > 0$, define 
\[ \mathcal{U}_{s,\delta,\pm}(\Lambda; \mathfrak{B}) = \int_{\mathfrak{B}} g_1(\alpha)^{s-1}g_s(\alpha)e(-\Lambda\alpha)K^{\delta}_{\pm}(\alpha)\d\alpha. \]
When the values of $\delta$ and $s$ are clear we will abbreviate this to $\mathcal{U}_{\pm}(\Lambda)$. Expanding the definition of $\mathcal{U}_{s,\delta,\pm}(\Lambda)$ and using Lemma \ref{lem:kernel} with $\delta = \tau/2$, one has 
\[ \mathcal{U}_{s,\tau/2,-}(\Lambda) \leq U_{s,\tau/2}(\Lambda) \leq \mathcal{U}_{s,\tau/2,+}(\Lambda). \]

We decompose the real line into major, minor and trivial arcs. Let $Y$ be large. Write 
\[ \mathfrak{M} = \{ \alpha \in \mathbb{R}: |\alpha| \leq Y^{-\theta+\nu} \}, \]
where $\nu = s^{-1}$. Define $\sigma = \min\{2^{-\theta-4},\nu/4 \}$. Define 
\[ \mathfrak{m} = \{ \alpha \in \mathbb{R}: Y^{-\theta+\nu} < |\alpha| \leq Y^{\mu} \}, \]
where $\mu = \sigma/(2s)$. Finally, let 
\[ \mathfrak{t} = \{ \alpha \in \mathbb{R}: |\alpha|>Y^{\mu} \}. \]
The sets $\mathfrak{M}, \mathfrak{m}$ and $\mathfrak{t}$ are the major, minor and trivial arcs. We then have 
\[U_{s,\tau/2}(\Lambda;\mathcal{T}) = \mathcal{U}_{s,\tau/2,\pm}(\Lambda,\mathfrak{M})+\mathcal{U}_{s,\tau/2,\pm}(\Lambda;\mathfrak{m})+\mathcal{U}_{s,\tau/2,\pm}(\Lambda;\mathfrak{t}). \] 
We will prove that $\max\{\mathcal{U}_{s,\tau/2,\pm}(\Lambda;\mathfrak{m}),\mathcal{U}_{s,\tau/2,\pm}(\Lambda;\mathfrak{t})\}=o(1)$ and $\mathcal{U}_{s,\tau/2,\mathfrak{M}}(\Lambda) \asymp \tau$, which will prove Theorem \ref{thm:DH}.

\subsection{Minor Arc Bounds} 
For the minor arc estimates, we establish Weyl-type bounds on the exponential sum $g_2(\alpha)$. Next is a version of Van der Corput's $k$th derivative test from \cite{GraKol}. 

\begin{lemma} \label{lem:grakol}
Let $q$ be a positive integer. Suppose that $\phi$ is a real valued function with $q+2$ continuous derivatives on the interval $I$. Suppose also that for some $\zeta > 0$ and some $\upsilon \geq 1$ one has $\zeta \leq |\phi^{(q+2)}(t)| \leq \upsilon \zeta$ on $I$. Let $Q = 2^q$. Then 
\[ \big| \sum_{x \in I} e(\phi(x)) \big| \ll |I|(\upsilon^2\zeta)^{\frac{1}{4Q-2}}+|I|^{1-\frac{1}{2Q}}\upsilon^{\frac{1}{2Q}}+|I|^{1-\frac{2}{Q}+\frac{1}{Q^2}}\zeta^{-\frac{1}{2Q}}. \]
\end{lemma}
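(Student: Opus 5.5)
The plan is to prove the lemma by induction on $q$. Each step combines the Weyl--van der Corput differencing inequality with the induction hypothesis applied to the differenced phase. This is the standard route to the $k$th derivative test in \cite{GraKol}. To make the induction run cleanly I would set up the base case at $q=0$, where $Q=1$. There the claimed bound reads
\[ \Big|\sum_{x\in I} e(\phi(x))\Big| \ll |I|\upsilon\zeta^{1/2}+|I|^{1/2}\upsilon^{1/2}+\zeta^{-1/2}, \]
which is (a weakened form of) the classical second derivative test. I would prove it in the usual way: Kusmin--Landau on subintervals where $\|\phi'\|$ is not small, plus a trivial bound near the points where $\phi'$ is close to an integer. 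Alternatively, cite Theorem 2.2 of \cite{GraKol} directly.

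For the inductive step, assume the bound for $q-1$ (with $Q/2$ in place of $Q$) and suppose $\zeta\le|\phi^{(q+2)}|\le\upsilon\zeta$ on $I$. Take $1\le H\le |I|$ to be chosen later. The Weyl--van der Corput inequality gives
\[ |S|^2 \ll \frac{|I|^2}{H}+\frac{|I|}{H}\sum_{1\le h<H}\Big|\sum_{x\in I_h} e(\phi(x+h)-\phi(x))\Big|, \]
where $I_h\subseteq I$ is an interval. Set $\phi_h(x)=\phi(x+h)-\phi(x)$. By the mean value theorem, $\phi_h^{(q+1)}(x)=h\,\phi^{(q+2)}(\xi)$, so $h\zeta\le|\phi_h^{(q+1)}|\le\upsilon h\zeta$ on $I_h$. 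Hence the induction hypothesis applies to $\phi_h$ with $q-1$ in place of $q$, $\zeta$ replaced by $h\zeta$, and the same $\upsilon$. This yields three terms for each $h$. Summing over $h<H$ uses $\sum_{h<H}h^{a}\ll H^{1+a}$, which is valid for both the positive exponent $a=\frac{1}{2Q-2}$ and the negative exponent $a=-\frac{1}{Q}$.

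The main obstacle is the optimization of $H$. After the differencing step, $|S|^2$ is bounded by $|I|^2H^{-1}$ plus three terms, each of the shape $|I|^{b}H^{c}\times(\text{power of }\zeta,\upsilon)$. Only the term coming from the first summand has positive exponent $c$ in $H$. So I would first balance $|I|^2/H$ against that term, choosing
\[ H\asymp(\upsilon^2\zeta)^{-\frac{1}{2Q-1}} \quad \text{truncated to the range } [1,|I|]. \]
This reproduces $|I|(\upsilon^2\zeta)^{1/(4Q-2)}$ after taking square roots. I would then check that at this choice the remaining two terms are dominated by $|I|^{1-\frac{1}{2Q}}\upsilon^{\frac{1}{2Q}}$ and $|I|^{1-\frac{2}{Q}+\frac{1}{Q^2}}\zeta^{-\frac{1}{2Q}}$.

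The boundary cases also need handling. If the optimal $H$ exceeds $|I|$, I take $H=|I|$, and the third term absorbs the result. If $H<1$, then the first term exceeds the trivial bound $|I|$, so the claim holds trivially. The bookkeeping of the exponents, namely verifying that $\frac{1}{2}(1-\frac{1}{Q}+\frac{2}{Q^{2}}\cdot\ldots)$ collapses exactly to $1-\frac{2}{Q}+\frac{1}{Q^2}$, is where I expect errors. If the exact exponents do not close, I would fall back on citing Theorem 2.8 of \cite{GraKol}, of which this statement is a transcription.
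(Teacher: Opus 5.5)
The paper gives no argument of its own here: its proof is the one-line citation of Theorem 2.8 in \cite{GraKol}, so your fallback is exactly the paper's route. The self-contained induction you outline has a genuine gap, though. The gap is in the choice of $H$, not in exponent bookkeeping. After differencing, applying the inductive hypothesis and summing over $h$, you have
\[ |S|^2 \ll \frac{|I|^2}{H}+|I|^2(\upsilon^2\zeta)^{\frac{1}{2Q-2}}H^{\frac{1}{2Q-2}}+|I|^{2-\frac{1}{Q}}\upsilon^{\frac{1}{Q}}+|I|^{2-\frac{4}{Q}+\frac{4}{Q^2}}\zeta^{-\frac{1}{Q}}H^{-\frac{1}{Q}}. \]
At $H=(\upsilon^2\zeta)^{-1/(2Q-1)}$ the last term contributes $|I|^{1-\frac{2}{Q}+\frac{2}{Q^2}}\upsilon^{\frac{1}{Q(2Q-1)}}\zeta^{-\frac{Q-1}{Q(2Q-1)}}$ to $|S|$. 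This is not dominated by the right-hand side of the lemma. For instance, take $Q=2$, $\upsilon=1$ and $\zeta=|I|^{-2}$. Then your $H=|I|^{2/3}$ lies in $[1,|I|]$ and the last term equals $|I|^2H^{-1/2}=|I|^{5/3}$. So you only obtain $|S|\ll|I|^{5/6}$, whereas all three terms of the lemma are $\ll|I|^{3/4}$. Your planned check that ``the remaining two terms are dominated'' therefore fails whenever the decreasing last term beats $|I|^2/H$ at your $H$.

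The missing idea is to optimise $H$ against every term, as Graham and Kolesnik do with their optimisation lemma (Lemma 2.4 in \cite{GraKol}). Concretely, take $H=\min\{|I|,\max\{H_1,H_2\}\}$. Here $H_1=(\upsilon^2\zeta)^{-1/(2Q-1)}$ is your choice, and $H_2$ is the point where the increasing term equals the last term. (The case $H_1<1$ is trivial, as you note.)

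At $H_2$ the two terms have the common value $|I|^{\frac{6Q^2-8Q+4}{Q(3Q-2)}}\upsilon^{\frac{2}{3Q-2}}$; note that $\zeta$ cancels. Its square root is at most $|I|^{1-\frac{1}{2Q}}\upsilon^{\frac{1}{2Q}}$, because $Q\geq 2$ and $\upsilon\geq 1$.

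If $\max\{H_1,H_2\}>|I|$, the endpoint $H=|I|$ turns the last term into $|I|^{1-\frac{5}{2Q}+\frac{2}{Q^2}}\zeta^{-\frac{1}{2Q}}\le |I|^{1-\frac{2}{Q}+\frac{1}{Q^2}}\zeta^{-\frac{1}{2Q}}$. This endpoint, not your first balance, is where the third term of the lemma comes from. Its exponent is only dominated by $1-\frac{2}{Q}+\frac{1}{Q^2}$; it does not collapse to it. With this choice of $H$, your base case, differencing step and summation over $h$ carry the induction through.
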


\begin{proof} See Theorem 2.8 in \cite{GraKol}. 
\end{proof}

\begin{lemma} \label{lem:weyl}
Let $\theta > 2$. Let $X > 0$ be large. For $X^{-\theta+\nu} \leq |\alpha| \leq X$ and $c_0X \leq T \leq X$ one has 
\[ \Big|\sum_{1 \leq x \leq T} e(\alpha x^{\theta}) \Big| \ll T^{1-\sigma}, \]
where the implicit constants are independent of $\alpha$, $T$ and $X$. 
\end{lemma}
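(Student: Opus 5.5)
We apply Lemma \ref{lem:grakol} with $\phi(t) = \alpha t^{\theta}$ on dyadic pieces of $[1,T]$, choosing for each piece the derivative order $q+2$ according to the size of $|\alpha|$. Since $\theta$ is not an integer, every derivative $\phi^{(j)}(t) = \alpha\theta(\theta-1)\cdots(\theta-j+1)t^{\theta-j}$ is nonzero, and on a dyadic interval $(H,2H]$ it satisfies $\zeta \leq |\phi^{(j)}(t)| \leq \upsilon\zeta$ with $\zeta \asymp |\alpha| H^{\theta-j}$ and $\upsilon \leq 2^{|\theta-j|}$, a constant depending only on $\theta$ and $j$.

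First we handle the small $x$ trivially. The range $x \leq T^{1-\sigma}/2$ contributes at most $T^{1-\sigma}/2$, so it suffices to bound sums over $(H,2H]$ with $T^{1-\sigma} \ll H \leq T$; there are $O(\log T)$ such pieces. Because the target exponent $\sigma$ is tiny, this logarithmic loss can be absorbed by proving a saving of $H^{-2\sigma}$, say, on each piece. The smallness of $\sigma \leq 2^{-\theta-4}$ is chosen for this purpose.

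For a dyadic piece we take $j = q+2$ as follows. If $|\alpha|$ is in the upper range, for instance $|\alpha| \geq X^{-\theta+\theta/2}$ up to $|\alpha| \leq X$, we choose $j$ to be an integer with $j > \theta$, roughly $j = \lfloor\theta\rfloor+2$ or $+3$. The aim is to make $\zeta \asymp |\alpha|H^{\theta-j}$ small, say $\zeta \leq H^{-c}$, while keeping $\zeta \geq H^{-j+c'}$. The first term $H\zeta^{1/(4Q-2)}$ then saves $H^{-c/(4Q-2)}$. The second term $H^{1-1/(2Q)}$ saves automatically. The third term $H^{1-2/Q+1/Q^2}\zeta^{-1/(2Q)}$ saves as long as $\zeta \geq H^{-(4-2/Q)+\epsilon'}$.

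Because $|\alpha|$ ranges over $[X^{-\theta+\nu}, X]$ and $H \asymp X^{1-O(\sigma)}$, we select $j$ depending on $\alpha$ so that $\zeta = |\alpha|H^{\theta-j}$ lands in a window $[H^{-3}, H^{-c}]$. When $|\alpha|$ is near $X^{-\theta+\nu}$, even $j = 2$ gives $\zeta \asymp |\alpha|H^{\theta-2} \geq H^{-2+\nu/2}$, which lies in this window since $\zeta \leq H^{-\nu}$ roughly. For larger $\alpha$ we increase $j$ one at a time, each step dividing $\zeta$ by $H$; since consecutive windows overlap, a suitable $j \leq \lceil\theta\rceil+2$ always exists. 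With $j \leq \theta+3$ we have $Q = 2^{j-2} \leq 2^{\theta+1}$, so each saving is at least $H^{-c/(4Q)}$. This is $\gg$ the required $H^{2\sigma}$ provided $\sigma \leq 2^{-\theta-4}$ and $\sigma \leq \nu/4$, which explains the definition of $\sigma$.

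The main obstacle is the bookkeeping. We must check that for every $\alpha$ in the range there is a $j$ with both the first-term and third-term savings exceeding the required power, uniformly in $T \in [c_0X, X]$. This includes the edge cases: $\alpha$ close to $X^{-\theta+\nu}$, where only the $\nu$-saving is available, and $\alpha$ close to $X$, where $j$ must exceed $\theta+1$ so that $\zeta = |\alpha|H^{\theta-j} \leq X H^{\theta-j}$ is small. We would first try fixing the window as $\zeta \in [H^{-2}, H^{-\nu/2}]$ and verify the exponent inequalities explicitly.
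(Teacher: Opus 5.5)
Your plan is the same as the paper's: dyadic blocks, a trivial bound below a cutoff, and Lemma \ref{lem:grakol} with the derivative order chosen so that $\zeta\asymp|\alpha|H^{\theta-j}$ lands in a fixed window of powers of $H$. The paper takes $q=\max\{1,\lfloor\theta+\lambda\rfloor\}$ with $\lambda=\log\alpha/\log P$, and cuts at $X^{1-2^{-\theta}}$ rather than at $T^{1-\sigma}$. The large-$\alpha$ range is fine. The edge case you deferred, however, is a genuine gap, and your assertion that $\sigma\le\nu/4$ handles it is false.

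Take $|\alpha|=X^{-\theta+\nu}$, $T=X$, and a block at your cutoff, $H\asymp X^{1-\sigma}$. Across that block the phase moves only by $|\alpha|H^{\theta}\asymp X^{\nu-\theta\sigma}$.
\begin{itemize}
\item Your inequality $\zeta\ge H^{-2+\nu/2}$ needs roughly $\sigma\le\nu/(2\theta)$.
\item The term $\zeta^{-1/2}\asymp H(|\alpha|H^{\theta})^{-1/2}$ from the $j=2$ test is $\ll H^{1-2\sigma}$ only if roughly $\sigma\le\nu/(\theta+4)$.
\item If $\theta>4$ and $\sigma=\nu/4$, then $|\alpha|H^{\theta}<1$ on these blocks, so there is no oscillation at all.
\end{itemize}
Separately, $j=2$ means $q=0$, which Lemma \ref{lem:grakol} excludes; you would have to use the classical second-derivative test there.

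No bookkeeping can repair this, because the statement itself forces $\sigma\le\nu/\theta$. For $\alpha=X^{-\theta+\nu}$ one has $0<\theta\alpha x^{\theta-1}\le\theta X^{\nu-1}<1/2$ on $[1,X]$. The standard comparison of sums with integrals for monotone $|f'|\le 1/2$ (Titchmarsh, Lemma 4.8) then gives
\[ \sum_{1\le x\le X}e(\alpha x^{\theta})=\int_1^X e(\alpha u^{\theta})\d u+O(1)=\alpha^{-1/\theta}\Big(\int_0^{\infty}e(v^{\theta})\d v+o(1)\Big)+O(1). \]
Since $\int_0^{\infty}e(v^{\theta})\d v=\Gamma(1+1/\theta)(2\pi)^{-1/\theta}e^{\pi i/(2\theta)}\neq0$, the sum has size $\asymp X^{1-\nu/\theta}$. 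Hence the lemma is false whenever $\sigma>\nu/\theta$. This happens, for instance, when $\theta>4$ and $s\ge 2^{\theta+2}$, so that $\sigma=\nu/4$; for $\theta=4.5$ that is every admissible $s$.

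The paper's proof has the same hole. It asserts $\lambda\ge\log\alpha/\log X\ge-\theta+\nu$, but for $\alpha<1$ and $P<X$ that inequality runs the other way. At $P=X^{1-2^{-\theta}}$ one can even have $\theta+\lambda<0$, so \eqref{eq:dyadicweyl} fails on such blocks.

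What Lemma \ref{lem:approxweightweyl} actually needs is only sums over $c_0X<x\le T$, so do the partial summation starting at $c_0Y$ rather than at $1$. Then every block has $H\asymp X$ and $|\alpha|H^{\theta}\gg X^{\nu}$. The paper's case analysis goes through up to constants, and the case $q=1$ yields exactly the saving $X^{-\nu/4}$. Alternatively, keep sums from $1$ but take $\sigma$ of order $\nu/(\theta+4)$.
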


\begin{proof} Taking the complex conjugate if necessary, we may assume that $\alpha > 0$. We will first show that for $X^{1-2^{-\theta}} \leq P \leq X$, one has 
\begin{align} \label{eq:dyadicweyl}
\Big| \sum_{P/2 < x \leq P} e(\alpha x^{\theta}) \Big| \ll P^{1-\sigma}. 
\end{align}
Let $I = (P/2,P]$ and let $\phi(t) = \alpha t^{\theta}$. For any positive integer $q$ we see that 
\[ \phi^{(q+2)}(t) = \alpha \theta\dots(\theta-q-1)t^{\theta-q-2}. \]
Therefore, for any $q \in \mathbb{N}$ there exist $\zeta \asymp \alpha P^{\theta-q-2}$ and $\upsilon \asymp 1$ such that $\zeta \leq |\phi^{(q+2)}(t)| \leq \upsilon\zeta$ on $I$. Put $\lambda = \log(\alpha)/\log(P)$. Since $X^{-\theta+\nu} \leq \alpha \leq X$, we have $\lambda \geq \log(\alpha)/\log(X) \geq -\theta+\nu$ and since additionally $P \geq X^{1-2^{-\theta}}$ we see $\lambda \leq (1-2^{-\theta})^{-1}\log(\alpha)/\log(X) \leq 2$. It follows that $\nu \leq \theta + \lambda \leq \theta+2$. Let $q = \max\{1, \lfloor \theta+\lambda\rfloor\}$. Then clearly $q$ is always a positive integer. Since $\upsilon \asymp 1$ and $\zeta \asymp \alpha P^{\theta-q-2} \asymp P^{\lambda+\theta-q-2}$ while $|I| \asymp P$, applying Lemma \ref{lem:grakol} gives us the estimate 
\[ \Big|\sum_{P/2 < x \leq P} e(\alpha x^{\theta}) \Big| = \big| \sum_{x \in I} e(\phi(x)) \big| \ll A_1+A_2+A_3, \]
where 
\[ A_1 = P^{1+\frac{\theta+\lambda-q-2}{4Q-2}}, \quad \quad A_2 = P^{1-\frac{1}{2Q}}, \quad \text{ and } \quad A_3 = P^{1-\frac{2}{Q}+\frac{1}{Q^2}+\frac{q+2-\theta-\lambda}{2Q}}. \]

We will now split into two cases. Suppose first that $1 \leq \theta+\lambda \leq \theta+2$. Then $q = \lfloor \theta+\lambda\rfloor$ and $0 \leq \theta+\lambda-q \leq 1$. Note that $2 \leq q \leq \theta+2$, whence $Q = 2^q \leq 2^{\theta+2}$ and $4Q-2 \leq 4\cdot 2^{\theta+2}-2 \leq 2^{\theta+4}$. Hence, we have the estimates 
\[ A_1 = P^{1+\frac{\theta+\lambda-q-2}{4Q-2}} \ll P^{1-\frac{1}{4Q-2}} \ll P^{1-\frac{1}{2^{\theta+4}}} \ll P^{1-\sigma}, \quad A_2 = P^{1-\frac{1}{2Q}} \ll P^{1-\frac{1}{2^{\theta+3}}} \ll P^{1-\sigma}, \]
and  
\[ A_3 = P^{1-\frac{1}{Q}+\frac{1}{Q^2}+\frac{q-\theta-\lambda}{2Q}} \ll P^{1-\frac{1}{Q}+\frac{1}{Q^2}} = P^{1-\frac{1}{2^q}+\frac{1}{2^{2q}}}. \]
For $t \geq 1$ one has the elementary inequality $1-2^{-t}+2^{-2t} \leq 1-2^{-t-1}$. Applying this to $t = q$ shows us that $A_3 \ll P^{1-2^{-\theta-3}} \ll P^{1-\sigma}$, and so \eqref{eq:dyadicweyl} is proven in this case. 

Suppose instead that $\nu \leq \theta+\lambda \leq 1$. Then $q = 1$ and $Q = 2$, and $0 \leq q-\theta-\lambda \leq 1-\nu$. Since $\sigma \leq 2^{-5}$, we have in this case that 
\[ A_1 = P^{1+\frac{\theta+\lambda-q-2}{4Q-2}} \ll P^{2/3} \ll P^{1-\sigma} \quad \text{ and } \quad A_2 = P^{1-\frac{1}{2Q}} \ll P^{3/4} \ll P^{1-\sigma}, \]
while 
\[ A_3 \ll P^{\frac{1}{4}+\frac{2+q-\theta-\lambda}{4}} \ll P^{\frac{3}{4} +\frac{1-\nu}{4}} \ll P^{1-\frac{\nu}{4}} \ll P^{1-\sigma}, \]
meaning \eqref{eq:dyadicweyl} holds in this case as well. The implicit constants are independent of $X$, $P$ and $\alpha$. 

Let $\eta_1\eta_2^{-1}X \leq T \leq X$. Let $m_0$ be the largest integer such that $T/2^{m_0} \geq X^{1-2^{-\theta}}$. Then 
\begin{align*} \Big| \sum_{1 \leq x \leq T} e(\alpha x^{\theta}) \Big| & \leq \sum_{m = 0}^{\lfloor \log_2(T) \rfloor} \Big| \sum_{\frac{T}{2^{m+1}} < x \leq \frac{T}{2^m}} e(\alpha x^{\theta}) \Big| 
\\ & = \sum_{m=0}^{m_0} \Big| \sum_{\frac{T}{2^{m+1}} < x \leq \frac{T}{2^m}} e(\alpha x^{\theta}) \Big| + \sum_{m=m_0+1}^{\lfloor \log_2(T) \rfloor} \Big| \sum_{\frac{T}{2^{m+1}} < x \leq \frac{T}{2^m}} e(\alpha x^{\theta}) \Big| 
\\ & \ll \sum_{m=0}^{m_0} (T/2^m)^{1-\sigma} + \sum_{m=m_0+1}^{\lfloor \log_2(T) \rfloor} X^{1-2^{-\theta}} 
\\ & \ll T^{1-\sigma}\sum_{m=0}^{m_0} 2^{-m(1-\sigma)} + X^{1-2^{-\theta}}\log(T) \ll T^{1-\sigma},
\end{align*}
where we used \eqref{eq:dyadicweyl} in the third line. Thus, the claim is proven. 
\end{proof}

\begin{lemma} \label{lem:approxweightweyl}
For any $\theta > 2$ and for sufficiently large $Y$ one has 
\[ \sup_{\alpha \in \mathfrak{m}} |g_2(\alpha) | \ll Y^{\frac{\theta}{s}-\sigma}. \]
\end{lemma}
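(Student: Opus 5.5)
The plan is to remove the weight $x^{-1+\theta/s}$ from $g_2$ by partial summation and then apply Lemma \ref{lem:weyl} to the unweighted partial sums. Recall that $g_2(\alpha)=\sum_{x\in I_s} x^{-1+\theta/s}e(\alpha x^{\theta})$ with $I_s=(c_0Y,Y]$ and $c_0=\eta_1\eta_2^{-1}$. Put
\[ S(T)=\sum_{1\le x\le T} e(\alpha x^{\theta}). \]
Abel summation then gives
\[ g_2(\alpha)=Y^{-1+\theta/s}S(Y)-(c_0Y)^{-1+\theta/s}S(c_0Y)+\Big(1-\frac{\theta}{s}\Big)\int_{c_0Y}^{Y} S(t)\,t^{-2+\theta/s}\d t, \]
up to the harmless boundary adjustment coming from the convention for the endpoint $c_0Y$.

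The second step is to apply Lemma \ref{lem:weyl} with $X=Y$. For $\alpha\in\mathfrak{m}$ we have $Y^{-\theta+\nu}<|\alpha|\le Y^{\mu}$. Since $\mu=\sigma/(2s)<1$, this range lies inside $X^{-\theta+\nu}\le|\alpha|\le X$. Every $T$ used above satisfies $c_0Y\le T\le Y$, so Lemma \ref{lem:weyl} gives $S(T)\ll T^{1-\sigma}\ll Y^{1-\sigma}$ uniformly in $\alpha\in\mathfrak{m}$ and $T\in[c_0Y,Y]$. The implicit constant does not depend on $\alpha$, $T$ or $Y$.

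Substituting this bound, each boundary term is $\ll Y^{-1+\theta/s}Y^{1-\sigma}=Y^{\theta/s-\sigma}$. Here $c_0$ is a constant depending only on $s$ and $\theta$, so $(c_0Y)^{-1+\theta/s}\asymp Y^{-1+\theta/s}$. The integral term is bounded by
\[ \ll Y^{1-\sigma}\int_{c_0Y}^{Y}t^{-2+\theta/s}\d t\ll Y^{1-\sigma}\cdot Y\cdot Y^{-2+\theta/s}=Y^{\theta/s-\sigma}. \]
Taking the supremum over $\alpha\in\mathfrak{m}$ gives the claim.

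I expect no serious obstacle. The one point to check is that the hypotheses of Lemma \ref{lem:weyl} cover the full minor arc. The lower endpoint $Y^{-\theta+\nu}$ matches exactly. The upper endpoint needs $Y^{\mu}\le Y$, which holds because $\mu<1$. Uniformity in $T$ over $[c_0Y,Y]$ is exactly the range that Lemma \ref{lem:weyl} allows.
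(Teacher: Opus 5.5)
Your argument is the paper's proof: Abel summation into the same three terms, each bounded by Lemma~\ref{lem:weyl} with $X=Y$. Your check that $\mu<1$ is a detail the paper leaves implicit. The problem is that this shared input is false as stated near the lower edge of $\mathfrak{m}$ in part of the admissible range. So the step ``$S(T)\ll T^{1-\sigma}$ uniformly in $\alpha\in\mathfrak{m}$'' fails there.

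Take $\alpha\in\mathfrak{m}$ with $\alpha\le 2Y^{-\theta+\nu}$. Then $0<\theta\alpha x^{\theta-1}\le 2\theta Y^{\nu-1}<1/2$ on $[1,Y]$, so the truncated Poisson formula (van der Corput's lemma for monotone $f'$ with $|f'|\le 1/2$) gives
\[ S(Y)=\int_0^Y e(\alpha u^{\theta})\d u+O(1)=\alpha^{-1/\theta}\Big(\int_0^{\infty}e(v^{\theta})\d v+o(1)\Big). \]
The last integral is a nonzero constant, so $|S(Y)|\asymp Y^{1-\nu/\theta}$: the initial range $x\le\alpha^{-1/\theta}$ contributes with no cancellation. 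In the proof of Lemma~\ref{lem:weyl} this is hidden in the step $\lambda\ge\log\alpha/\log X$, which reverses when $\alpha<1$; dyadic blocks with $\alpha P^{\theta}\le 1$ have no saving.

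Since $\sigma=\min\{2^{-\theta-4},\nu/4\}$, one has $\sigma>\nu/\theta$ for instance whenever $\theta>4$ and $s\ge 2^{\theta+2}$. In that regime your first term $Y^{-1+\theta/s}S(Y)$ alone has size $\asymp Y^{\theta/s-\nu/\theta}$, which is larger than $Y^{\theta/s-\sigma}$. So no argument that bounds the three terms separately can succeed; $g_2$ is small there only because the terms cancel.

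The statement itself is still true, and the repair is small: sum by parts from the left endpoint. With $S^*(t)=\sum_{c_0Y<x\le t}e(\alpha x^{\theta})=S(t)-S(c_0Y)$ one has
\[ g_2(\alpha)=Y^{-1+\theta/s}S^*(Y)+\Big(1-\frac{\theta}{s}\Big)\int_{c_0Y}^{Y}S^*(t)\,t^{-2+\theta/s}\d t. \]
Now bound $S^*(t)$ directly by Lemma~\ref{lem:grakol} on $I=(c_0Y,t]$. There $x\asymp Y$, so $\zeta\asymp\alpha Y^{\theta-q-2}$ and $\alpha Y^{\theta}>Y^{\nu}$ genuinely holds. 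Run the case analysis from the proof of \eqref{eq:dyadicweyl}, with $\theta+\lambda$ replaced by $\log(\alpha Y^{\theta})/\log Y$ and using $|I|\le Y$. This gives $S^*(t)\ll Y^{1-\min\{2^{-\theta-3},\nu/4\}}\ll Y^{1-\sigma}$, uniformly in $t\in[c_0Y,Y]$ and $\alpha\in\mathfrak{m}$. With this bound in place of Lemma~\ref{lem:weyl}, your final computation goes through unchanged.
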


\begin{proof} By Abel's Summation Formula, we have 
\[ g_2(\alpha) = A+B+C+O(Y^{-1+\theta/s}), \]
where 
\[ A = Y^{-1+\frac{\theta}{s}}\sum_{1 \leq x \leq Y}e(\alpha x^{\theta}), \quad \quad B = - (\eta_1\eta_2^{-1}Y)^{-1+\frac{\theta}{s}} \sum_{1 \leq x \leq \eta_1\eta_2^{-1}Y} e(\alpha x^{\theta})\]
and 
\[ C = (1-\theta/s)\int_{\eta_1\eta_2^{-1}Y}^Y T^{-2+\theta/s}\sum_{1 \leq x \leq T} e(\alpha x^{\theta}) \d T. \]

Lemma \ref{lem:weyl} applies to $A$, $B$ and $C$ when $\alpha \in \mathfrak{m}$, so for any $\epsilon > 0$ one has $A \ll Y^{\theta/s-\sigma}$ and $B \ll Y^{\theta/s-\sigma}$ while 
\[ C \ll \int_{\eta_1\eta_2^{-1}Y}^Y T^{-2+\theta/s}T^{1-\sigma}\d T \ll Y^{\theta/s-\sigma}. \]
Hence, for any $\alpha \in \mathfrak{m}$ one has $|g_2(\alpha)| \ll Y^{\theta/s-\sigma}$, and the claim follows. 
\end{proof}

Finally, we arrive at the main estimate of this subsection. 

\begin{lemma} \label{lem:approxminorarcs}
Let $\theta > 2$ and $s \geq s_0+\chi$. For any $\epsilon > 0$ one has $\mathcal{U}_{s,\tau/2,\pm}(\Lambda;\mathfrak{m}) \ll Y^{-\frac{\sigma}{2s}+\epsilon}$. 
\end{lemma}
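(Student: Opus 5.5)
We bound the minor-arc integral trivially in terms of a sup of $g_2$ and a moment of $g_1$. Since $\mathfrak{m} \subseteq [-Y^{\mu},Y^{\mu}]$ and $|K^{\tau/2}_{\pm}(\alpha)| \ll 1$ by \eqref{eq:kerbound}, we have
\[ |\mathcal{U}_{s,\tau/2,\pm}(\Lambda;\mathfrak{m})| \ll \sup_{\alpha \in \mathfrak{m}}|g_2(\alpha)| \int_{-Y^{\mu}}^{Y^{\mu}} |g_1(\alpha)|^{s-1}\d\alpha . \]
Lemma \ref{lem:approxweightweyl} gives $\sup_{\mathfrak{m}}|g_2| \ll Y^{\theta/s-\sigma}$. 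For the integral we need a moment bound for $g_1 = g(\alpha;(Y_s,Y])$ over a range of length $Y^{\mu}$.

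To estimate the $(s-1)$th moment, decompose $(Y_s,Y]$ dyadically as in the proof of Lemma \ref{lem:approxfullrange}, into $O(\log Y)$ intervals $(2^{-m-1}Y,2^{-m}Y]$ (intersected with $(Y_s,Y]$). Apply Lemma \ref{lem:approxweightmean} with $w=s-1 \geq s_0+\chi-1$, $\eta=1/2$ and $\kappa = Y^{\mu}$; this is legitimate since $\kappa \gg 1$ and the implicit constants are independent of $\kappa$. The endpoint pieces are handled in the same way with a fixed $\eta$, or trivially. Combining with H\"older, exactly as in Lemma \ref{lem:approxfullrange}, yields
\[ \int_{-Y^{\mu}}^{Y^{\mu}} |g_1(\alpha)|^{s-1}\d\alpha \ll_{\epsilon} Y^{\mu+\epsilon}. \]
The point to check is that Lemma \ref{lem:approxfullrange} is really uniform in $\kappa$, and that the interval $(Y_s,Y]$ in place of $[1,Y]$ causes no issue; restricting the range only helps, since the bound is a sum over dyadic pieces.

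Putting these together gives
\[ \mathcal{U}_{s,\tau/2,\pm}(\Lambda;\mathfrak{m}) \ll Y^{\theta/s-\sigma+\mu+\epsilon} . \]
This alone does not give the claimed exponent $-\sigma/(2s)$, because of the factor $Y^{\theta/s}$ coming from $g_2$. This is the main obstacle, and the remedy is to move the extra weight into the moment. Use Hölder on $g_2$ itself: write $|g_2| = |g_2|^{1-1/(s-1)}\,|g_2|^{1/(s-1)}$, and pair it with $|g_1|^{s-1}$ via Hölder with exponents over $s-1$ copies, so that
\[ \int_{\mathfrak{m}}|g_1^{s-1}g_2| \le \sup_{\mathfrak{m}}|g_2|^{\frac{1}{s-1}}\Big(\int|g_1|^{s-1}\Big)^{\frac{s-2}{s-1}}\cdots . \]
The intended form is
\[ \int_{\mathfrak{m}} |g_1|^{s-1}|g_2| \le \Big(\int |g_1|^{s-1}\Big)^{\frac{s-2}{s-1}}\Big(\int |g_1|^{s-1}|g_2|^{s-1}\Big)^{\frac{1}{s-1}}, \]
or, more simply, $\int|g_1|^{s-1}|g_2| \le (\int|g_1|^{s})^{\frac{s-1}{s}}(\int|g_2|^{s})^{\frac1s}$. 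The $g_2$ factor is then bounded by $\sup|g_2|^{\frac{1}{s}}\,(\int_{-Y^{\mu}}^{Y^{\mu}}|g_2|^{s-1})^{\frac1s}$. Lemma \ref{lem:approxweightmean} with $w=s-1$ gives $\int|g_2|^{s-1} \ll Y^{\mu-\theta/s+\epsilon}$, which exactly cancels the $Y^{\theta/s}$ carried by $\sup|g_2|$. Lemma \ref{lem:approxfullrange} with $w=s$ gives $\int|g_1|^{s} \ll Y^{\mu+\epsilon}$.

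The net bound is then $Y^{-\sigma/s+\mu+\epsilon} = Y^{-\sigma/(2s)+\epsilon}$, since $\mu = \sigma/(2s)$. The choice $\mu=\sigma/(2s)$ strongly suggests this is precisely the intended bookkeeping. I would carry out this last Hölder arrangement carefully, tracking that each exponent $w$ used lies in $[s_0+\chi-1,s]$.
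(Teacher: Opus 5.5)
Your final arrangement is correct and is exactly the paper's proof. H\"older gives $\int_{\mathfrak{m}}|g_1|^{s-1}|g_2| \le (\int_{\mathfrak{m}}|g_1|^{s})^{(s-1)/s}(\int_{\mathfrak{m}}|g_2|^{s})^{1/s}$; the $g_2$ factor is bounded by $\sup_{\mathfrak{m}}|g_2|\int_{-Y^{\mu}}^{Y^{\mu}}|g_2|^{s-1}$ via Lemmas~\ref{lem:approxweightweyl} and \ref{lem:approxweightmean} (with $w=s-1$, $\kappa=Y^{\mu}$), and the $g_1$ factor via Lemma~\ref{lem:approxfullrange} (with $w=s$), which yields $Y^{\mu-\sigma/s+\epsilon}=Y^{-\sigma/(2s)+\epsilon}$. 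The preliminary bound $\sup|g_2|\int|g_1|^{s-1}$ and the unfinished H\"older variant are dead ends you rightly abandoned, and they should be dropped from the write-up.
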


\begin{proof} Using the bound $K_{\pm}^{\tau/2}(\alpha) \ll 1$ with H\"{o}lder's Inequality, applying Lemmas \ref{lem:approxweightmean} and  \ref{lem:approxfullrange} with $w=s-1\geq s_0+\chi-1$ and $w = s$ respectively, and using Lemma \ref{lem:approxweightweyl}, we have 
\begin{align*} |\mathcal{U}_{s,\tau/2,\pm}(\Lambda;\mathfrak{m})| & \ll \Big( \int_{\mathfrak{m}} |g_1(\alpha)|^{s}\d\alpha \Big)^{\frac{s-1}{s}}\Big( \int_{\mathfrak{m}} |g_2(\alpha)|^{s}\d\alpha \Big)^{\frac{1}{s}} 
\\ 
& \leq \Big( \int_{-Y^{\mu}}^{Y^{\mu}} |g_1(\alpha)|^{s}\d\alpha \Big)^{\frac{s-1}{s}}\Big( \sup_{\alpha \in \mathfrak{m}} |g_2(\alpha)| \int_{-Y^{\mu}}^{Y^{\mu}} |g_2(\alpha)|^{s-1}\d\alpha \Big)^{\frac{1}{s}} 
\\ 
& \ll (Y^{\mu}Y^{\epsilon})^{(s-1)/s}(Y^{\theta/s-\sigma}Y^{\mu}Y^{-\theta/s+\epsilon})^{1/s} \ll Y^{\mu-\sigma/s+\epsilon} \ll Y^{-\frac{\sigma}{2s}+\epsilon}.
\end{align*}
\end{proof}

Thus, $U_{s,\tau/2}(\Lambda) = \mathcal{U}_{s,\tau/2,\pm}(N;\mathfrak{M})+\mathcal{U}_{s,\tau/2,\pm}(N;\mathfrak{t})+O(Y^{-\frac{\sigma}{2s}+\epsilon})$. The next section is devoted to estimating the trivial arc contribution.

\subsection{Trivial Arc Bounds}
Recall the definition 
\[ \mathfrak{t} = \{ \alpha \in \mathbb{R}: |\alpha_2| \geq Y^{\mu} \} .\] 
Noting that $K_{\pm}$ is even, we use \eqref{eq:kerbound} to deduce that 
\begin{align*} \mathcal{U}_{s,\tau/2,\pm}(\Lambda; \mathfrak{t}) & \ll  \int_{\mathfrak{t}} |g_1(\alpha)|^{s-1}|g_2(\alpha)||K^{\tau/2}_{\pm}(\alpha_2)| \d\alpha_2\d\alpha_1 \ll  \int_{ Y^{\mu}}^{\infty} |g_1(\alpha)|^{s-1}|g_2(\alpha)||K^{\tau/2}_{\pm}(\alpha_2)|\d\alpha 
\\
& \ll \sum_{j = \lfloor{\mu \log_2 Y \rfloor} }^{\infty} \frac{\log(Y)}{2^{2j}}\int_{2^j}^{2^{j+1}} |g_1(\alpha)|^{s-1}|g_2(\alpha)|\d\alpha . 
\end{align*}
Define $I_j$ as the integral inside the sum. Suppose $s \geq s_0 +\chi $. Note that $s_0$ is even and $2^{j+1} \geq 1$ for $j \geq \lfloor \mu \log_2 Y \rfloor$. Therefore, we may use Lemmas \ref{lem:approxweightmean} and \ref{lem:approxfullrange} with $w = s$ to deduce that 
\[ I_j  \leq \Big( \int_{2^j}^{2^{j+1}} |g_1(\alpha)|^{s}\d\alpha \Big)^{\frac{s-1}{s}}\Big( \int_{2^j}^{2^{j+1}} |g_2(\alpha)|^{s}\d\alpha \Big)^{\frac{1}{s}} \ll (2^{j+1}Y^{\epsilon})^{(s-1)/s}(2^{j+1}Y^{\epsilon})^{1/s} \ll 2^{j+1}Y^{\epsilon}. \]
Inserting this bound into the above, one has 
\[ \sum_{j = \lfloor{\mu \log_2 Y \rfloor} }^{\infty} \frac{\log(Y)}{2^{2j}} I_j \ll Y^{\epsilon}\sum_{j=\lfloor{\mu \log_2 Y \rfloor}}^{\infty} 2^{-j} \ll Y^{-\mu+\epsilon}. \]
Therefore, we have the following. 
\begin{lemma} \label{lem:trivial} 
For any $\theta > 2$, $s \geq s_0+\chi$ and $\epsilon > 0$, one has $\mathcal{U}_{s,\tau/2,\pm}(\Lambda; \mathfrak{t}) \ll_{\epsilon} Y^{-\mu+\epsilon}$.
\end{lemma}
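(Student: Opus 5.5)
The estimate is obtained by a dyadic decomposition of the trivial arcs, using only the decay of the kernel $K^{\tau/2}_{\pm}$ together with the mean value bounds of Lemmas \ref{lem:approxweightmean} and \ref{lem:approxfullrange}. Since $g_1$, $g_2$ and $K_\pm$ transform by complex conjugation (respectively evenness) under $\alpha\mapsto-\alpha$, it suffices to treat $\alpha\geq Y^{\mu}$. On that range \eqref{eq:kerbound} gives $|K^{\tau/2}_{\pm}(\alpha)|\ll \log(Y)\alpha^{-2}$. Covering $[Y^{\mu},\infty)$ by the dyadic intervals $[2^j,2^{j+1}]$ with $j\geq \lfloor \mu\log_2 Y\rfloor$ bounds the trivial arc contribution by
\[ \sum_{j\geq \lfloor \mu\log_2 Y\rfloor} \log(Y)\,2^{-2j}\, I_j, \qquad I_j=\int_{2^j}^{2^{j+1}}|g_1(\alpha)|^{s-1}|g_2(\alpha)|\d\alpha . \]

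To control $I_j$, apply H\"older's inequality with exponents $s/(s-1)$ and $s$:
\[ I_j\leq \Big(\int_{2^j}^{2^{j+1}}|g_1|^s\Big)^{(s-1)/s}\Big(\int_{2^j}^{2^{j+1}}|g_2|^s\Big)^{1/s}. \]
Each factor is extended to the symmetric range $[-2^{j+1},2^{j+1}]$, which is legitimate since the integrands are nonnegative. Lemmas \ref{lem:approxweightmean} and \ref{lem:approxfullrange} are then applied with $w=s$ and $\kappa=2^{j+1}\gg1$. This is allowed because $s\geq s_0+\chi\geq s_0+\chi-1$.

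For $g_2$, which is summed over $(c_0Y,Y]$, Lemma \ref{lem:approxweightmean} gives $\ll 2^{j+1}Y^{s\theta/s-\theta+\epsilon}=2^{j+1}Y^{\epsilon}$. For $g_1$, which is summed over $(Y_s,Y]\subseteq[1,Y]$, the natural tool is Lemma \ref{lem:approxfullrange}, but that lemma concerns $g(\alpha;[1,Y])$ rather than $g(\alpha;(Y_s,Y])$. Rather than truncating the range directly, I would rerun its dyadic argument over the blocks contained in $(Y_s,Y]$. Each block bound from Lemma \ref{lem:approxweightmean} is uniform in $\kappa$, and H\"older over the $O(\log Y)$ blocks costs only a factor $Y^{\epsilon}$. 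This gives $\int_{-2^{j+1}}^{2^{j+1}}|g_1|^s\ll 2^{j+1}Y^{\epsilon}$, and hence $I_j\ll 2^{j+1}Y^{\epsilon}$.

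Summing over $j$ then yields
\[ \mathcal{U}_{s,\tau/2,\pm}(\Lambda;\mathfrak{t})\ll \log(Y)\, Y^{\epsilon}\sum_{2^j\gg Y^{\mu}}2^{-j}\ll Y^{-\mu+2\epsilon}, \]
and renaming $\epsilon$ gives the lemma.

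The main point needing care is uniformity in $\kappa$. The implied constants in Lemmas \ref{lem:approxweightmean} and \ref{lem:approxfullrange}, which ultimately come from Lemma \ref{lem:unweight}, must be independent of $\kappa$, because $\kappa=2^{j+1}$ is unbounded over the sum. Those lemmas state exactly this independence. Granting it, the only remaining check is that the $Y^{\epsilon}$ losses do not depend on $j$, which is clear since the ranges of the sums depend only on $Y$.
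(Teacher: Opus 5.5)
Your proposal is correct and follows essentially the paper's argument: evenness of $K^{\tau/2}_{\pm}$, the bound $K^{\tau/2}_{\pm}(\alpha)\ll\log(Y)\alpha^{-2}$, dyadic blocks $[2^j,2^{j+1}]$, H\"older with exponents $s/(s-1)$ and $s$, Lemmas \ref{lem:approxweightmean} and \ref{lem:approxfullrange} with $w=s$ and $\kappa=2^{j+1}$ giving $I_j\ll 2^{j+1}Y^{\epsilon}$, and then summing the geometric series. Your attention to uniformity in $\kappa$, and to $g_1$ running over $(Y_s,Y]$ rather than $[1,Y]$ (which the paper passes over), only makes the argument more complete; the partial block next to $Y_s$ is handled either by the monotonicity of the weighted count $W_u$ in the interval or simply by writing $g_1=g(\alpha;[1,Y])-g(\alpha;[1,Y_s])$.
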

Recalling that $\mu = \sigma/(2s)$, we therefore have $U_{s,\tau/2}(\Lambda) = \mathcal{U}_{s,\tau/2,\pm}(\Lambda;\mathfrak{M})+O(Y^{-\frac{\sigma}{2s}+\epsilon})$. It therefore only remains to estimate the major arc contribution.

\subsection{Major Arc Bounds} The analysis of the major arcs only requires $s > \theta +2$, and so this all we assume for this section. Recall that $c_0 = \eta_1\eta_2^{-1}$, and the exponential sums 
\[ g_1(\alpha) = \sum_{Y_s < x \leq Y} x^{-1+\theta/s}e(\alpha x^{\theta}) \quad \text{ and } \quad g_2(\alpha) = \sum_{c_0 Y < x \leq Y } x^{-1+\theta/s}e(\alpha x^{\theta}). \]
We will show that the two functions 
\[ g^*_1(\alpha) = \int_{Y_s}^Y u^{-1+\theta/s}e(\alpha u^{\theta}) \d u \quad \text{ and } \quad g^*_2(\alpha) = \int_{c_0 Y}^Y u^{-1+\theta/s}e(\alpha u^{\theta}) \d u. \]
uniformly approximate $g_1(\alpha)$ and $g_2(\alpha)$ on $\mathfrak{M}$. 

\begin{lemma} \label{lem:majapprox}
For $\alpha \in \mathfrak{M}$ one has $g_2(\alpha) -g^*_2(\alpha) \ll Y^{-1+\nu+\theta/s}$ and 
\[ g_1(\alpha) - g_1^*(\alpha) \ll Y^{-1+\nu+\theta/s}+ Y^{-1/s+\theta/s^2}. \] 
\end{lemma}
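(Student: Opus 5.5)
The plan is to compare each sum with its integral via the Euler--Maclaurin formula in its simplest form (equivalently, Abel summation against the sawtooth function). For a $C^1$ function $f$ on an interval $[a,b]$ one has
\[ \sum_{a < x \leq b} f(x) - \int_a^b f(u)\d u \ll |f(a)|+|f(b)| + \int_a^b |f'(u)|\d u. \]
We apply this with $f(u) = u^{-1+\theta/s}e(\alpha u^{\theta})$. Differentiating gives
\[ f'(u) = (-1+\theta/s)u^{-2+\theta/s}e(\alpha u^{\theta}) + 2\pi i\alpha\theta u^{-2+\theta+\theta/s}e(\alpha u^{\theta}), \]
so that
\[ |f'(u)| \ll u^{-2+\theta/s} + |\alpha| u^{-2+\theta+\theta/s}. \]

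For $g_2$, the interval is $(c_0Y,Y]$ with $c_0$ a fixed constant. The boundary terms are $\ll Y^{-1+\theta/s}$. The first part of $\int|f'|$ is $\ll Y^{-1+\theta/s}$. The second part is
\[ \ll |\alpha|Y^{-1+\theta+\theta/s} \ll Y^{-\theta+\nu}Y^{-1+\theta+\theta/s} = Y^{-1+\nu+\theta/s}, \]
using $\alpha\in\mathfrak{M}$. Together these give $g_2-g_2^*\ll Y^{-1+\nu+\theta/s}$.

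For $g_1$, the interval is $(Y_s,Y]$, and the lower endpoint now matters. Since $\theta/s<1$, the boundary term at $Y_s$ is
\[ Y_s^{-1+\theta/s} = Y^{(-1+\theta/s)/s} = Y^{-1/s+\theta/s^2}, \]
which is exactly the second error term in the statement. The first part of $\int|f'|$ satisfies
\[ \int_{Y_s}^Y u^{-2+\theta/s}\d u \ll Y_s^{-1+\theta/s}, \]
giving the same term. The $\alpha$-part satisfies
\[ |\alpha|\int_{Y_s}^{Y}u^{-2+\theta+\theta/s}\d u \ll |\alpha|Y^{-1+\theta+\theta/s} \ll Y^{-1+\nu+\theta/s}, \]
because the exponent $-2+\theta+\theta/s>-1$ when $\theta>2$, so the integral is dominated by its upper endpoint.

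The only mild subtlety is the endpoint conventions: the sums run over integers in half-open intervals whose endpoints need not be integers. This is handled by the $\{u\}-1/2$ sawtooth form of Euler--Maclaurin, which absorbs non-integer endpoints into the boundary terms already bounded above. There is no real obstacle; the key point is to track the lower endpoint $Y_s$ in $g_1$, which produces the extra $Y^{-1/s+\theta/s^2}$ term.
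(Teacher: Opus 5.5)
Your proposal is correct and is essentially the paper's argument. The paper compares each term $x^{-1+\theta/s}e(\alpha x^{\theta})$ with $\int_{x-1}^{x} u^{-1+\theta/s}e(\alpha u^{\theta})\d u$ via the mean value theorem and adds endpoint corrections, which is a hand-rolled version of your first-order Euler--Maclaurin bound (boundary terms plus $\int|f'|$); in both, the $|\alpha|$-part gives $Y^{-1+\nu+\theta/s}$ on $\mathfrak{M}$, and the lower endpoint $Y_s=Y^{1/s}$ of $g_1$ produces the extra $Y^{-1/s+\theta/s^2}$ term.
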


\begin{proof} For any integer $x \geq 2$, we have the estimate 
\begin{align*} |x^{-1+\theta/s}e(\alpha x^{\theta}) & -  \int_{x-1}^{x} u^{-1+\theta/s} e(\alpha u^{\theta})\d u| 
\leq \sup_{x-1 \leq u \leq x } |x^{-1+\theta/s}e(\alpha x^{\theta})-u^{-1+\theta/s}e(\alpha u^{\theta})|.  
\end{align*}
By the Mean Value Theorem, the right hand side above is 
\[ \ll \sup_{x-1 \leq t \leq x } (t^{-2+\theta/s}+|\alpha|t^{\theta-2+\theta/s} ) \ll |x|^{-2+\theta/s}+|\alpha||x|^{\theta-2+\theta/s}.  \]
Now, for any real $P \leq Q$ one has 
\begin{align*} \sum_{x} x^{-1+\theta/s}e(\alpha x^{\theta})-\int_P^Q u^{-1+\theta/s}e(\alpha x^{\theta})\d u & =  \sum_{x} \big (x^{-1+\theta/s}e(\alpha x^{\theta} )-\int_{x-1}^{x} u^{-1+\theta/s}e(\alpha x^{\theta} )\d u \big)
\\ & + \int_{\lfloor P \rfloor}^{P} u^{-1+\theta/s}e(\alpha u^{\theta}) du - \int_{\lfloor Q \rfloor}^{Q} u^{-1+\theta/s}e(\alpha u^{\theta}) \d u,
\end{align*}
The sum being over $x \in (P,Q]$. Taking $P = c_0Y$ and $Q = Y$ and using the above bounds, we see 
\begin{align*} g_2(\alpha)-g_2^*(\alpha) 
\ll \sum_{c_0Y < x \leq Y} (|x|^{-2+\theta/s}+|\alpha||x|^{\theta-2+\theta/s})+Y^{-1+\theta/s} \ll |\alpha|Y^{\theta-1+\theta/s}+Y^{-1+\theta/s}.
\end{align*}
Suppose $\alpha \in \mathfrak{M}$. Then $|\alpha| \leq Y^{-\theta+\nu}$, so $|\alpha|Y^{\theta-1+\theta/s} \ll Y^{-1+\nu+\theta/s}$. This proves that 
\[ g_2(\alpha)-g_2^*(\alpha) \ll Y^{-1+\nu+\theta/s}. \]
Similarly, taking $P = Y_s = Y^{1/s}$ and $Q = Y$ and repeating the computation above shows that 
\[ g_1(\alpha)-g_1^*(\alpha) \ll Y^{-1+\nu+\theta/s} + Y^{-1/s+\theta/s^2}. \]
\end{proof}

We use Lemma \ref{lem:majapprox} to approximate $\mathcal{U}_{s,\tau/2,\pm}(\Lambda;\mathfrak{M})$. 
For $\delta > 0$ and measurable $\mathfrak{B} \subseteq \mathbb{R}$, define 
\[ H^*_{s,\delta,\pm}(\Lambda;\mathfrak{B}) = \int_{\mathfrak{B}} g^*_1(\alpha)^{s-1}g^*_s(\alpha)e(-\Lambda\alpha)K^{\delta}_{\pm}(\alpha)\d\alpha. \]

\begin{lemma} \label{lem:intmajapprox}
For $\theta > 2$ and $s > s_0+\chi$ one has 
\[ \mathcal{U}_{s,\tau/2,\pm}(\Lambda;\mathfrak{M})- H^*_{s,\tau/2,\pm}(\Lambda;\mathfrak{M}) \ll Y^{-1/3}+ Y^{-\frac{\theta}{2s}}. \]
\end{lemma}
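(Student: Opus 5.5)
We approximate $\mathcal{U}_{s,\tau/2,\pm}(\Lambda;\mathfrak{M})$ by $H^*_{s,\tau/2,\pm}(\Lambda;\mathfrak{M})$ via a telescoping argument, swapping one factor at a time. Write $g_1^{s-1}g_2-(g_1^*)^{s-1}g_2^*$ as
\[ (g_2-g_2^*)g_1^{s-1} + g_2^*\sum_{j=0}^{s-2} (g_1-g_1^*)g_1^{j}(g_1^*)^{s-2-j}. \]
Each term is then bounded on $\mathfrak{M}$ using Lemma \ref{lem:majapprox} for the difference factor, trivial bounds for the remaining factors, the bound $K^{\tau/2}_{\pm}(\alpha)\ll 1$ from \eqref{eq:kerbound}, and the measure $|\mathfrak{M}|\ll Y^{-\theta+\nu}$.

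For the trivial bounds we use $|g_1(\alpha)|,|g_1^*(\alpha)| \ll \sum_{x\le Y}x^{-1+\theta/s}\ll Y^{\theta/s}$, and likewise $|g_2|,|g_2^*|\ll Y^{\theta/s}$. The first term then contributes
\[ \ll Y^{-\theta+\nu}\cdot Y^{-1+\nu+\theta/s}\cdot Y^{(s-1)\theta/s} = Y^{-1+2\nu}, \]
and since $\nu=s^{-1}$ and $s$ is large (certainly $s\ge 6$), this is $\ll Y^{-1/3}$. The terms involving $g_1-g_1^*$ contribute the same $Y^{-1+2\nu}$ from the first part of Lemma \ref{lem:majapprox}. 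The second part, $Y^{-1/s+\theta/s^2}$, gives
\[ \ll Y^{-\theta+\nu}\, Y^{-1/s+\theta/s^2}\, Y^{(s-1)\theta/s} = Y^{-\theta/s+\theta/s^2}\cdot Y^{\nu-1/s} = Y^{-\frac{\theta}{s}(1-\frac1s)}. \]
For $s\ge2$ this is $\ll Y^{-\theta/(2s)}$. Summing the $s$ terms gives the stated bound $Y^{-1/3}+Y^{-\theta/(2s)}$.

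The main obstacle is making sure the crude $L^\infty$ bounds do not lose too much, i.e.\ that the measure $Y^{-\theta+\nu}$ of $\mathfrak{M}$ exactly compensates the size $Y^{(s-1)\theta/s}$ of the other factors. The exponent bookkeeping above shows that it does, with the total exponent of the main product being $\theta$, so no mean value input is needed on $\mathfrak{M}$. If a tighter bound were needed, one could instead use $|g_j^*(\alpha)|\ll Y^{\theta/s}(1+Y^\theta|\alpha|)^{-1/s}$ via integration by parts, but the trivial bounds already suffice for the claimed exponents.
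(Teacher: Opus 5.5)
Your proposal is correct and follows essentially the same route as the paper: replace each $g_j$ by $g_j^*$, bound the difference with Lemma \ref{lem:majapprox} and every other factor trivially by $Y^{\theta/s}$, and let $|\mathfrak{M}|\ll Y^{-\theta+\nu}$ cancel the total size $Y^{\theta}$, which gives $Y^{-1+2\nu}+Y^{-\frac{\theta}{s}(1-\frac{1}{s})}$. The only difference is cosmetic: your telescoping identity produces terms with exactly one difference factor, whereas the paper's binomial expansion also produces terms with higher powers $(g_1-g_1^*)^i$, which it bounds separately and which only contribute smaller amounts.
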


\begin{proof} By the Binomial Theorem and the obvious identity $g_j(\alpha) = g_j(\alpha)-g_j^*(\alpha)+g_j^*(\alpha)$, we have 
\begin{align*}
\mathcal{U}_{s,\tau/2,\pm}(\Lambda;\mathfrak{M}) & = \sum_{i=0}^{s-1} \binom{s-1}{i}\int_{\mathfrak{M}} (g_1(\alpha)-g_1^*(\alpha))^{i}g_1^*(\alpha)^{s-1-i}g_2(\alpha)e(-\Lambda\alpha) K^{\tau/2}_{\pm}(\alpha)\d\alpha 
\\ & = H^*_{s,\tau/2,\pm}(\Lambda;\mathfrak{M})+\int_{\mathfrak{M}} g_1^*(\alpha)^{s-1}(g_2(\alpha)-g_2^*(\alpha))e(-\Lambda\alpha)K^{\tau/2}_{\pm}(\alpha) \d\alpha 
\\ & +\sum_{i=1}^{s-1} \binom{s-1}{i}\int_{\mathfrak{M}} (g_1(\alpha)-g_1^*(\alpha))^{i}g_1^*(\alpha)^{s-1-i}g_2(\alpha)e(-\Lambda\alpha)K^{\tau/2}_{\pm}(\alpha)\d\alpha. 
\end{align*}
By Lemma \ref{lem:majapprox} we have $g_2(\alpha)-g_2^*(\alpha) \ll Y^{-1+\nu+\theta/s}$. Moreover, one has $|\mathfrak{M}| \ll Y^{-\theta+\nu}$, so we may use the trivial bound $g_1^*(\alpha) \ll Y^{\theta/s}$ to deduce that 
\[ \int_{\mathfrak{M}} |g_1^*(\alpha)^{s-1}(g_2(\alpha)-g_2^*(\alpha))| \d\alpha \ll Y^{\theta(s-1)/s} Y^{-1+\nu+\theta/s}|\mathfrak{M}| \ll Y^{-1+2\nu}. \]
Similarly, the trivial estimate $\max \{g_2(\alpha),g_2^*(\alpha) \} \ll Y^{\theta/s}$ and the bound for $g_1(\alpha) -g_1^*(\alpha)$ from Lemma \ref{lem:majapprox} combine to show that 
\[ \int_{\mathfrak{M}} |(g_1(\alpha)-g_1^*(\alpha))^{i}g_2^*(\alpha)^{s-1-i}g_2(\alpha)|\d\alpha \ll (Y^{-1+\nu+\theta/s}+Y^{-1/s+\theta/s^2})^i Y^{\theta(s-i)/s}|\mathfrak{M}| \] 
\[ \ll (Y^{(-1+\nu+\theta/s)i}+Y^{(-1/s+\theta/s^2)i})Y^{-i\theta/s+\nu} \ll Y^{-i+i\nu+\nu}+Y^{i(-1/s+\theta/s^2-\theta/s)+\nu}. \] 
Note that $-1+\nu < 0$ and $-1/s+\theta/s^2-\theta/s <0$. When $1 \leq i \leq s-1$ it therefore follows from collecting these estimates that 
\[ \mathcal{U}_{s,\tau/2,\pm}(N; \mathfrak{M}) - H^*_{s,\tau/2,\pm}(N) \ll Y^{-1+2\nu}+Y^{-1/s+\theta/s^2-\theta/s+\nu} \ll Y^{-1+2\nu} + Y^{\theta/s^2-\theta/s}. \]
Since $s \geq 3$, we see that the right hand side above is $\ll Y^{-1/3}+Y^{-\frac{\theta}{2s}}$.
\end{proof}

It now suffices to analyze $H^*_{s,\tau/2,\pm}(\Lambda;\mathfrak{M})$. We first prove bounds on $g_1^*(\alpha)$ and $g_2^*(\alpha)$. 
We make use of the following well-known lemma on oscillatory integrals. 

\begin{lemma} \label{lem:stein}
Let $\lambda > 0$. Suppose that $\phi$ and $\psi$ are smooth functions in $(a,b)$ and suppose $|\phi^{(k)}(x)| \geq 1$ for all $x \in (a,b)$. Then 
\[ \big|\int_a^b \psi(t)e^{i\lambda \phi(t)} \d t \big| \leq c_k\lambda^{-1/k}\big[ |\psi(b)|+\int_a^b |\psi'(t)|\d t \big] \]
as long as either $k \geq 2$, or if $k = 1$ and $\phi'(t)$ is monotonic. Moreover, the constant $c_k$ is independent of $\phi$, $\lambda$ and $\psi$. 
\end{lemma}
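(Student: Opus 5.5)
This is the standard van der Corput-type estimate for oscillatory integrals (Stein, Harmonic Analysis, Ch. VIII, Prop. 2 and Corollary). The plan has two stages. First I would prove the case $\psi\equiv 1$: there is $c_k$ with
\[ \Big|\int_a^b e^{i\lambda\phi(t)}\d t\Big| \le c_k\lambda^{-1/k}. \]
Then I would pass to general $\psi$ by integration by parts.

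For $k=1$ with $\phi'$ monotonic and $|\phi'|\ge 1$, write
\[ e^{i\lambda\phi} = \frac{1}{i\lambda\phi'}\frac{d}{dt}e^{i\lambda\phi} \]
and integrate by parts. The boundary terms are at most $2/\lambda$. The remaining integral is bounded by
\[ \lambda^{-1}\int_a^b \big|(1/\phi')'\big|\d t. \]
Since $1/\phi'$ is monotonic, this integral telescopes to $\lambda^{-1}\,|1/\phi'(b)-1/\phi'(a)|\le \lambda^{-1}$. This gives $c_1=3$.

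For $k\ge 2$ I would induct on $k$. Assume the bound holds for $k-1$. Since $|\phi^{(k)}|\ge1$ and $\phi^{(k)}$ is continuous, $\phi^{(k-1)}$ is strictly monotonic. Hence the set where $|\phi^{(k-1)}|<\gamma$ is an interval $J$ of length at most $2\gamma$. Off $J$ there are at most two intervals on which $|\phi^{(k-1)}|\ge\gamma$. On each of these, apply the inductive hypothesis to the rescaled phase $\phi/\gamma$ with parameter $\lambda\gamma$. When $k-1=1$, $\phi'$ is monotonic there because $\phi''$ has fixed sign, so the $k=1$ case applies. Each piece then contributes $c_{k-1}(\lambda\gamma)^{-1/(k-1)}$, and $J$ contributes at most $2\gamma$ trivially. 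So the integral is at most
\[ 2c_{k-1}(\lambda\gamma)^{-1/(k-1)} + 2\gamma. \]
Choosing $\gamma=\lambda^{-1/k}$ gives $c_k = 2c_{k-1}+2$. Every constant obtained this way depends only on $k$, so it is independent of $\phi$, $\lambda$ and $\psi$.

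For general $\psi$, set
\[ F(t)=\int_a^t e^{i\lambda\phi(u)}\d u. \]
The first stage applies on every subinterval $(a,t)$, because the hypotheses restrict to subintervals. Hence $|F(t)|\le c_k\lambda^{-1/k}$ uniformly in $t$. Integrating by parts,
\[ \int_a^b \psi e^{i\lambda\phi}\d t = F(b)\psi(b) - \int_a^b F(t)\psi'(t)\d t, \]
which is bounded by $c_k\lambda^{-1/k}\big[|\psi(b)|+\int_a^b|\psi'|\big]$. This is exactly the claimed estimate.

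I expect the main obstacle to be the bookkeeping in the inductive step. The hypotheses must genuinely transfer to the subintervals, in particular monotonicity of $\phi'$ when passing down to the $k=1$ case. Endpoints where $\phi$ is merely smooth on the open interval need care, which I would handle by working on $[a+\varepsilon,b-\varepsilon]$ and letting $\varepsilon\to0$. Beyond that, the paper may simply cite Stein for this lemma.
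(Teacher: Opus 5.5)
Your proposal is correct and matches the proof the paper relies on: the paper gives no argument of its own and simply cites Stein's Proposition 2 (van der Corput's lemma), which Stein proves exactly as you do, by induction with $\gamma=\lambda^{-1/k}$ and $c_1=3$, and its Corollary, which is your integration by parts against $F(t)=\int_a^t e^{i\lambda\phi(u)}\d u$. The only tacit hypothesis worth making explicit is that $\phi$ is real-valued; Stein assumes this and your argument uses it, but the paper's statement omits it.
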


\begin{proof} See the Corollary on page 334 of Proposition 2 in \cite{Stein}. 
\end{proof}

\begin{lemma} \label{lem:approxbound}
For any real number $\beta$, one has 
\[ g_1^*(\beta) \ll X^{\theta/s}(1+X^{\theta}|\beta|)^{-1/s} \quad \text{ and } \quad g_2^*(\beta) \ll X^{\theta/s}(1+X^{\theta}|\beta|)^{-1}. \]
\end{lemma}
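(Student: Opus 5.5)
Here $X$ should be read as $Y$, and the plan is to prove both bounds by comparing the trivial estimate with Lemma \ref{lem:stein} after the substitution $v = u^{\theta}$.

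\emph{Trivial bounds.} Since the integrands have modulus $u^{-1+\theta/s}$, we have
\[ |g_1^*(\beta)| \le \int_{Y_s}^{Y} u^{-1+\theta/s}\,du \ll Y^{\theta/s} \quad\text{and}\quad |g_2^*(\beta)| \ll Y^{\theta/s}. \]
This already gives both claims when $Y^{\theta}|\beta| \le 1$. So assume $Y^{\theta}|\beta| > 1$; by conjugation we may also take $\beta>0$.

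\emph{The bound for $g_2^*$.} Substitute $v = u^{\theta}$, so $du = \theta^{-1} v^{1/\theta - 1}\,dv$. Then
\[ g_2^*(\beta) = \theta^{-1}\int_{c_0^{\theta}Y^{\theta}}^{Y^{\theta}} v^{1/s-1} e(\beta v)\,dv. \]
Apply Lemma \ref{lem:stein} with $k=1$, $\phi(v)=v$ (so $\phi'$ is monotone), $\lambda = 2\pi\beta$ and $\psi(v) = v^{1/s-1}$. Since $\psi$ is monotone, $|\psi(b)| + \int_a^b |\psi'| \ll (c_0^{\theta}Y^{\theta})^{1/s-1} \ll Y^{\theta/s-\theta}$, with constants depending on $c_0$, $s$ and $\theta$. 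This yields
\[ g_2^*(\beta) \ll \beta^{-1} Y^{\theta/s-\theta} = Y^{\theta/s}(Y^{\theta}\beta)^{-1}, \]
which combined with the trivial bound gives $g_2^*(\beta) \ll Y^{\theta/s}(1+Y^{\theta}|\beta|)^{-1}$.

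\emph{The bound for $g_1^*$.} After the same substitution, $g_1^*(\beta) = \theta^{-1}\int_{Y_s^{\theta}}^{Y^{\theta}} v^{1/s-1}e(\beta v)\,dv$. The lower limit is small, so we split at $v = \beta^{-1}$, which lies in $(0, Y^{\theta})$.
\begin{itemize}
\item On $[Y_s^{\theta}, \min(\beta^{-1},Y^{\theta})]$, bound the integrand trivially: the contribution is at most $\int_0^{\beta^{-1}} v^{1/s-1}\,dv \ll \beta^{-1/s}$.
\item On $[\beta^{-1}, Y^{\theta}]$, apply Lemma \ref{lem:stein} with $k=1$ and $\psi = v^{1/s-1}$ decreasing. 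The bracket is $\ll (\beta^{-1})^{1/s-1}$, so the contribution is $\ll \beta^{-1}\beta^{1-1/s} = \beta^{-1/s}$.
\end{itemize}
Hence $g_1^*(\beta) \ll \beta^{-1/s} = Y^{\theta/s}(Y^{\theta}\beta)^{-1/s}$, and together with the trivial bound this is $\ll Y^{\theta/s}(1+Y^{\theta}|\beta|)^{-1/s}$.

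The main point of care is the $g_1^*$ case. Applying Lemma \ref{lem:stein} over the whole range would produce a boundary term of size $Y_s^{\theta(1/s-1)}$, which is far too large; the split at $v=\beta^{-1}$ is what gives the sharp exponent $-1/s$. Everything else is routine.
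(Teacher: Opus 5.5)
Your proof is correct. It uses the same two ingredients as the paper: the trivial bound and the $k=1$ case of Lemma \ref{lem:stein}, balanced at the transition point $u\approx|\beta|^{-1/\theta}$. The differences are in execution.

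The paper stays in the $u$ variable. It normalizes the phase as $\lambda^{-1}\beta u^{\theta}$ with $\lambda=\theta|\beta|P^{\theta-1}$, which gives the block estimate $\int_P^Q u^{-1+\theta/s}e(\beta u^{\theta})\,du\ll P^{\theta/s}(P^{\theta}|\beta|)^{-1}$. For $g_1^*$ it then cuts $[Y_s,Y]$ dyadically, takes the minimum of the trivial and oscillatory bounds on each block, and sums two geometric series on either side of the index $M_0$ defined by $|\beta|=Y^{-\theta}2^{M_0\theta}$.

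Your substitution $v=u^{\theta}$ makes the phase linear, so the bracket in Lemma \ref{lem:stein} collapses to $\psi(a)$ for the decreasing weight $\psi(v)=v^{1/s-1}$. This is the paper's block estimate in disguise, since $a^{1/s-1}|\beta|^{-1}=P^{\theta/s}(P^{\theta}|\beta|)^{-1}$ when $a=P^{\theta}$. A single cut at $v=1/\beta$ then replaces the dyadic sum, because one application of the oscillatory bound to the whole tail costs only $\psi(1/\beta)/\beta=\beta^{-1/s}$. Your route is shorter and avoids the bookkeeping with $M$, $M_0$ and $P_m$; the dyadic template is more standard but gains nothing here.

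One small correction to your write-up: the endpoint that needs care is the lower one. If $1/\beta<Y_s^{\theta}$, the trivially bounded range is empty and the oscillatory range is $[Y_s^{\theta},Y^{\theta}]$. Its bracket is $\psi(Y_s^{\theta})\le\psi(1/\beta)$, so your bound $\beta^{-1/s}$ still holds. You should therefore split at $\max\{1/\beta,Y_s^{\theta}\}$ rather than at $\min\{1/\beta,Y^{\theta}\}$; the minimum with $Y^{\theta}$ is redundant once $Y^{\theta}\beta>1$.
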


\begin{proof} Let $P \leq Q$. Let $\lambda = \theta |\beta|P^{\theta-1}$, and let $\phi(u) = \lambda^{-1}\beta x^{\theta}$. Since $\theta > 1$, it follows that $|\phi'(u)|\geq 1$ on $[P,Q]$, and $\phi$ is monotonic. Thus, by Lemma \ref{lem:stein} we have 
\begin{align} \label{eq:osc}
\int_P^Q u^{-1+\theta/s}e(\beta x^{\theta})\d u \ll \lambda^{-1}\big[ Q^{-1+\theta/s}+\int_P^Q u^{-2+\theta/s}\d u \big] \ll P^{-1+\theta/s}\lambda^{-1} \ll P^{\theta/s}(P^{\theta}|\beta|)^{-1}. 
\end{align}
If we let $P = c_0Y$ and $Q = Y$, then since $c_0$ is independent of $Y$ we have 
\[ g_2^*(\alpha) \ll (c_0Y)^{\theta/s}((c_0Y)^{\theta}|\beta|)^{-1} \ll Y^{\theta/s}(Y^{\theta}|\beta|)^{-1}. \]
Therefore, if $Y^{\theta}|\beta| \geq 1$, then $1+Y^{\theta}|\beta| \leq 2Y^{\theta}|\beta|$, and so 
\[ g_2^*(\alpha) \ll Y^{\theta/s}(Y^{\theta}|\beta|)^{-1} \ll Y^{\theta/s}(1+Y^{\theta}|\beta|)^{-1}. \]
If $Y^{\theta}|\beta| \leq 1$ instead, then $2^{-1} \leq (1+Y^{\theta}|\beta|)^{-1}$. Therefore, we then have 
\[ g_2^*(\alpha) \ll Y^{\theta/s} \ll Y^{\theta/s}(1+Y^{\theta}|\beta|)^{-1}, \]
where we used the trivial estimate $g_2^*(\alpha) \ll Y^{\theta/s}$. This proves the required estimate on $g_2^*(\alpha)$. 

Next, let $M$ be the smallest integer such that $Y/2^M \leq Y_s$, and define $M_0$ to be the unique real number such that $|\beta| = Y^{-\theta}2^{M_0\theta}$. Let $P_m = Y/2^m$ for $m = 0, \ldots, M-1$ and let $P_M = Y_s$. Then $Y/2^m \leq P_m \leq 2Y/2^m $ for all $m$, a relation which is obvious for $m = 1,\ldots, M-1$ and true by inspection for $P_M$. Using these observations, the trivial bound, and \eqref{eq:osc} we have 
\begin{align*} \int_{P_{m+1}}^{P_m}u^{-1+\theta/s}e(\beta u^{\theta}) du & \ll \min\{P_m^{\theta/s}, P_{m+1}^{\theta/s}(P^{\theta}_{m+1}|\beta|)^{-1} \} \\ & \ll \min\{(Y/2^m)^{\theta/s}, (Y/2^m)^{\theta/s}((Y/2^m)^{\theta}|\beta|)^{-1} \}. 
\end{align*}
If $|\beta|Y^{\theta} \leq 1$, then $(1+Y^{\theta}|\beta|)^{-1/s} \geq 2^{-1/s} \gg 1$, so 
\begin{align*}
g_1^*(\alpha) = \sum_{m=0}^{M-1} \int_{P_{m+1}}^{P_m}u^{-1+\theta/s}e(\beta u^{\theta}) du & \ll \sum_{m=0}^{M-1} (Y/2^m)^{\theta/s} \ll Y^{\theta/s} \ll Y^{\theta/s}(1+Y^{\theta}|\beta|)^{-1/s}, 
\end{align*}
and the claim is proven. Otherwise, if $|\beta|Y^{\theta} \geq 1$, then 
\begin{align*}
g_1^*(\alpha) = \sum_{m=0}^{M-1} \int_{P_{m+1}}^{P_m}u^{-1+\theta/s}e(\beta u^{\theta}) du & \ll \sum_{m=0}^{M-1} \min\{(Y/2^m)^{\theta/s}, (Y/2^m)^{\theta/s}((Y/2^m)^{\theta}|\beta|)^{-1} \} 
\\ & \ll \sum_{0 \leq m \leq M_0} (Y/2^m)^{\theta/s}2^{\theta(m-M_0)} + \sum_{M_0 < m \leq M-1} (Y/2^m)^{\theta/s} 
\\ & \ll Y^{\theta/s}2^{-M_0\theta} 2^{M_0(\theta-\theta/s)}+Y^{\theta/s}2^{-M_0\theta/s} \ll |\beta|^{-1/s}.
\end{align*} 
Since $|\beta|Y^{\theta} \geq 1$, we see $2|\beta|Y^{\theta} \geq 1+|\beta|Y^{\theta}$, whereby $|\beta|^{-1} \leq 2Y^{\theta}(1+Y^{\theta}|\beta|)^{-1}$, and so 
\[ |\beta|^{-1/s} \ll Y^{\theta/s}(1+Y^{\theta}|\beta|)^{-1/s}, \]
and the required estimate again follows. 
\end{proof}

Define 
\[ H_{s}(\Lambda) = \int_{-\infty}^{\infty} g^*_1(\alpha)^{s-1}g_2^*(\alpha)e(-\Lambda\alpha)\d\alpha. \]

\begin{lemma} \label{lem:finalmajapprox}
For $\theta > 2$ and $s \geq \theta+2$ one has $H_s(\Lambda) \ll 1$ and  
\[ H^*_{s,\tau/2,\pm}(\Lambda;\mathfrak{M}) = \tau H_s(\Lambda)+O(\log(Y)^{-1}). \]
\end{lemma}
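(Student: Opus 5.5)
We use Lemma \ref{lem:approxbound} (with $X = Y$) to control $g_1^*$ and $g_2^*$, and Lemma \ref{lem:kernel} to control $K^{\tau/2}_{\pm}$.

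\emph{Step 1: $H_s(\Lambda) \ll 1$.} By Lemma \ref{lem:approxbound},
\[ |g_1^*(\alpha)^{s-1}g_2^*(\alpha)| \ll Y^{\theta}(1+Y^{\theta}|\alpha|)^{-\frac{s-1}{s}-1}. \]
The exponent $1+(s-1)/s$ exceeds $1$, so the substitution $\beta = Y^{\theta}\alpha$ gives
\[ \int_{\mathbb{R}} Y^{\theta}(1+Y^{\theta}|\alpha|)^{-2+1/s}\d\alpha = \int_{\mathbb{R}}(1+|\beta|)^{-2+1/s}\d\beta \ll 1. \]
Hence the integral defining $H_s(\Lambda)$ converges absolutely and is $O(1)$. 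The same computation over the tail $|\alpha|>Y^{-\theta+\nu}$ gives
\[ \int_{|\alpha|>Y^{-\theta+\nu}} |g_1^{*}(\alpha)^{s-1}g_2^*(\alpha)|\d\alpha \ll \int_{|\beta|>Y^{\nu}}|\beta|^{-2+1/s}\d\beta \ll Y^{-\nu(1-1/s)}, \]
which is a negative power of $Y$.

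\emph{Step 2: the major arcs.} On $\mathfrak{M}$ we have $|\alpha| \leq Y^{-\theta+\nu}$, and we may take $\beta = \theta-\nu>0$, which is fixed. So \eqref{eq:kerasymp} applies with $\delta=\tau/2$ and gives $K^{\tau/2}_{\pm}(\alpha) = \tau + O(\log(Y)^{-1})$. Substituting this into $H^*_{s,\tau/2,\pm}(\Lambda;\mathfrak{M})$ gives
\[ H^*_{s,\tau/2,\pm}(\Lambda;\mathfrak{M}) = \tau\int_{\mathfrak{M}} g_1^*(\alpha)^{s-1}g_2^*(\alpha)e(-\Lambda\alpha)\d\alpha + O\Big(\log(Y)^{-1}\int_{\mathbb{R}}|g_1^{*s-1}g_2^*|\d\alpha\Big). \]
By Step 1 the error term is $O(\log(Y)^{-1})$.

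\emph{Step 3: completing the integral.} Replace $\int_{\mathfrak{M}}$ by $\int_{\mathbb{R}}$, which turns the main term into $\tau H_s(\Lambda)$. The cost is $\tau$ times the tail estimate from Step 1, namely $O(Y^{-\nu(1-1/s)})$, and this is $O(\log(Y)^{-1})$. Note that the term $e(-\Lambda\alpha)$ has modulus one and plays no role in any of these bounds.

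The main point to check is the tail integrability in Step 1. It needs the combined decay exponent $(s-1)/s + 1 > 1$, which holds for every $s \geq 2$. The hypothesis $s \geq \theta+2$ is presumably only needed later, for the lower bound $H_s(\Lambda) \gg 1$ via the singular integral, and is not used here. One should also confirm that Lemma \ref{lem:approxbound}, stated with $X$, is meant with $X=Y$, and that $g_s^*$ in the definition of $H^*$ means $g_2^*$. Given those identifications, the argument is routine.
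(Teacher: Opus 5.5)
Your proposal is correct and follows essentially the same route as the paper. The paper also combines the two bounds of Lemma \ref{lem:approxbound} into the integrable majorant $Y^{\theta}(1+Y^{\theta}|\alpha|)^{-1-(s-1)/s}$, deduces $H_s(\Lambda)\ll 1$ and the tail bound $Y^{-\nu(s-1)/s}$ off $\mathfrak{M}$, and then applies \eqref{eq:kerasymp} with $\delta=\tau/2$. Your ordering is slightly more careful than the paper's compressed display: you apply \eqref{eq:kerasymp} only on $\mathfrak{M}$, where it holds with exponent $\theta-\nu$, and then complete the integral, whereas the paper extends to $\mathbb{R}$ first and then replaces $K^{\tau/2}_{\pm}$ by $\tau+O(\log(Y)^{-1})$ on the whole line.
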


\begin{proof} By Lemma \ref{lem:approxbound} and non-negativity and evenness of the integrand, we see for $T \geq 0$ that 
\begin{align} \label{eq:basicmajbound}
\int_{|\alpha| \geq T} |g_1^*(\alpha)^{s-1}g^*_2(\alpha)|\d\alpha \ll Y^{\theta}\int_{T}^{\infty} (1+Y^{\theta}\alpha)^{-1-(s-1)/s} \d\alpha \ll (1+Y^{\theta}T)^{-(s-1)/s}. 
\end{align}
Letting $T = 0$, we deduce that $H_s(\Lambda)$ is absolutely convergent and $|H_s(\Lambda)| \ll 1$. Using this fact, and additionally \eqref{eq:basicmajbound} with $T = Y^{-\theta+\nu}$ and using \eqref{eq:kerasymp} with $\delta = \tau/2$, we find that 
\begin{align*} 
H^*_{s,\tau/2,\pm}(\Lambda;& \mathfrak{M}) = \int_{-\infty}^{\infty} g_1^*(\alpha)^{s-1}g_2^*(\alpha) K_{\pm}^{\tau/2}(\alpha)\d\alpha + O \Big( \int_{|\alpha|>Y^{-\theta+\nu}} |g_1^*(\alpha)^{s-1}g_2^*(\alpha)| \d\alpha \Big)
\\ 
& = \tau H_s(\Lambda) +O\Big( \log(Y)^{-1}\int_{-\infty}^{\infty} |g_1^*(\alpha)^{s-1}g_2^*(\alpha)|\d\alpha+\int_{|\alpha|>Y^{-\theta+\nu}} |g_1^*(\alpha)^{s-1}g_2^*(\alpha)| \d\alpha \Big)
\\ 
& = \tau H_s(\Lambda)+O(\log(Y)^{-1}+Y^{-\nu(s-1)/s}),
\end{align*}
from which the claim follows. 
\end{proof}

Collecting Lemmas \ref{lem:approxminorarcs}, \ref{lem:trivial}, \ref{lem:intmajapprox} and \ref{lem:finalmajapprox}, we deduce that there exists $\delta > 0$ such that 
\[ U_{s,\tau/2}(\Lambda; \mathcal{T}) =  \mathcal{U}_{s,\tau/2,\pm}(\Lambda;\mathfrak{M}) + O(Y^{-\delta}) = \tau H_s(\Lambda)+O(\log(Y)^{-1}). \]
Since we have already shown that $H_s(\Lambda) \ll 1$, we know $U_{s,\tau/2}(\Lambda; \mathcal{T}) \leq \tau \Upsilon$ for some constant $\Upsilon > 0$ independent of $Y$. It therefore only remains now to show that $H_s(\Lambda) \gg 1$, which will complete the proof of Theorem \ref{thm:DH}. 

Expanding the definition of $H_s(\Lambda)$, we see 
\[ H_s(\Lambda) = \lim_{T \to \infty} \int_{-T}^{T} \int_{B} (u_1\cdots u_s)^{-1+\theta/s}e(\alpha(u_1^{\theta}+\cdots+u_s^{\theta}-\Lambda))\d\mathbf{u}\d\alpha, \]
where $B = [Y_s,Y]^{s-1}\times[c_0Y,Y]$. 
Making the change of variables $v_i = u_i^{\theta}/\Lambda$ for $1 \leq i \leq s$ and $\beta = \alpha \Lambda$, we see 
\begin{align*} H_s(\Lambda) & = \theta^{-s} \lim_{T \to \infty} \int_{-T}^{T} \int_{B^*} (v_1\cdots v_s)^{-1+1/s}e(\beta(v_1+\cdots+v_s-1))\d\mathbf{v}\d\beta \\ & = \theta^{-s} \lim_{T \to \infty} \int_{B^*} (v_1\cdots v_s)^{-1+1/s} \frac{\sin(2\pi T(v_1+\cdots+v_s-1))}{\pi(v_1+\cdots+v_s-1)}\d\mathbf{v}, 
\end{align*}
where $B^* = [\Lambda^{1/s-1},1]^{s-1}\times[c_0^{\theta},1] $. Recall that $c_0^{\theta} = (2s)^{-1}$. Next, we set $v^* = v_1+\cdots+v_{s}$ and make the change of variables $(v_1,\ldots,v_s) \to (v_1,\ldots, v_{s-1},v^*)$ to see that 
\[ H_s(\Lambda) = \theta^{-s} \lim_{T \to \infty} \int_{0}^s \phi(v^*) \frac{\sin(2\pi T(v^*-1))}{\pi(v^*-1)}\d v^*, \]
where 
\[ \phi(v^*) = \int_{\mathcal{D}(v^*)} (v_1\cdots v_{s-1})^{-1+1/s} (v^*-v_1-\cdots-v_{s-1})^{-1+1/s}\d v_1\cdots \d v_{s-1} \]
and $\mathcal{D}(v^*)$ is the subset of $\mathbb{R}^{s-1}$ with $(v_1,\ldots, v_{s-1})$ satisfying the inequalities 
\[ (2s)^{-1} \leq v^*-v_1-\cdots-v_{s-1} \leq 1 \quad \quad \text{ and } \quad \quad \Lambda^{1/s-1} \leq v_i \leq 1. \]
Now, $\phi(v^*)$ is a function of bounded variation,  
and so by Fourier's Integral Theorem we have 
\[ H_s(\Lambda) = \theta^{-s} \lim_{T \to \infty} \int_{0}^s \phi(v^*) \frac{\sin(2\pi T(v^*-1))}{\pi(v^*-1)}\d v^* = \theta^{-s}\phi(1). \]

For any measurable region $\mathcal{C} \subseteq \mathbb{R}^{s-1}$, define the integral
\[\mathcal{I}(\mathcal{C}) = \int_{\mathcal{C}} (v_1\cdots v_{s-1})^{-1+1/s} (1-v_1-\cdots-v_{s-1})^{-1+1/s}\d v_1\cdots \d v_{s-1}. \]
Let $\mathcal{C} \subseteq \mathbb{R}^{s-1}$ be the set of $(v_1,\ldots,v_{s-1})$ such that $0 \leq v_i \leq 1$ and 
\[ (2s)^{-1} \leq 1 -v_1-\cdots-v_{s-1} \leq 1. \]
Note that $\mathcal{C} \setminus \mathcal{D}(1)$ must contain at least one variable $v_i$ such that $0 \leq v_i \leq \Lambda^{1/s-1}$, and that in $\mathcal{C} \setminus \mathcal{D}(1) $ one has by definition that $(1-v_1-\cdots-v_{s-1}) \geq (2s)^{-1}$. Therefore, one has 
\begin{align*} \mathcal{I}(\mathcal{C})-\phi(1) & = \int_{\mathcal{C} \setminus \mathcal{D}(1)} (v_1\cdots v_{s-1})^{-1+1/s} (1-v_1-\cdots-v_{s-1})^{-1+1/s}\d v_1\cdots \d v_{s-1} 
\\ & \leq (2s)^{1-1/s} \int_{\mathcal{D}(1) \setminus \mathcal{C}} (v_1\cdots v_{s-1})^{-1+1/s}\d v_1\cdots \d v_{s-1}
\\ & \ll \int_0^{\Lambda^{1/s-1}} v_1^{-1+1/s}\d v_1 \int_0^1 v_2^{-1+1/s} \d v_2 \cdots \int_0^1 v_{s-1}^{-1+1/s}\d v_{s-1} \ll \Lambda^{1/s^2-1/s}. 
\end{align*}
Therefore, it suffices to show that $\mathcal{I}(\mathcal{C}) \gg 1$. Define $\mathcal{C}_0 \subseteq [0,1]^{s-1}$ to be the set of tuples $(v_1,\ldots,v_{s-1})$ with $0 \leq 1-v_1-\cdots-v_{s-1} \leq 1$ and $\mathcal{C}_1$ to be the set of tuples $(v_1,\ldots,v_{s-1}) \in [0,1]^{s-1}$ with $0 \leq 1-v_1-\cdots-v_{s-1} < (2s)^{-1}$. Therefore, 
\[ \mathcal{I}(\mathcal{C}) = \mathcal{I}(\mathcal{C}_0)- \mathcal{I}(\mathcal{C}_1). \]
It is well-known that $\mathcal{I}(\mathcal{C}_0) = \Gamma(s^{-1})^s$, a fact proven using a standard induction argument and the properties of the Beta function, see for example Theorem 4.1 in \cite{DavBook}.  
Since $\Gamma(z)$ has a simple pole at $z = 0$, we have $\Gamma(s^{-1})^s \gg s^s$. On the other hand, a change of variables shows that 
\[ \mathcal{I}(\mathcal{C}_1) = \int_{\mathcal{E}} (v_1\cdots v_s)^{-1+1/s} \d\mathbf{v}, \]
$\mathcal{E}$ being the set of $(v_1,\ldots, v_s) \in [0,1]^{s-1}\times [0,(2s)^{-1})$ satisfying $v_1+\cdots+v_s = 1$. Therefore, dropping the second constraint gives us the bound 
\begin{align*} 
\mathcal{I}(\mathcal{C}_1) \leq \int_{0}^1 v_1^{-1+1/s}\d v_1 \cdots \int_{0}^{1} v_{s-1}^{-1+1/s} \d v_{s-1} \int_0^{(2s)^{-1}} v_s^{-1+1/s} \d v_s = 2^{-1/s}s^{s-1/s}. 
\end{align*}
Therefore, 
\[ \mathcal{I}(\mathcal{C}) = \mathcal{I}(\mathcal{C}_0)- \mathcal{I}(\mathcal{C}_1) \gg s^s-2^{-1/s}s^{s-1/s} \gg 1, \]
as needed. This proves that $H_s(\Lambda) \gg 1$, completing the proof of Theorem \ref{thm:DH}.

\section{Probabilistic Argument} \label{sec:prob}
We now use the results proven earlier to deduce Theorem \ref{thm:main} using the probabilistic method. This step closely resembles the argument of \cite{WoolThin}, which in turn models the approach from \cite{VuRef}. We will supply the details for completeness. We assume $\theta > 2$ and $s \geq s_0(\theta)+\chi$. 

For $x \in \mathbb{N}$, set $p_x = \min\{1, cx^{-1+\theta/s}\log(x)^{1/s} \}$, where $c \geq 1$ is a constant parameter to be specified later. We choose a set $\mathfrak{X} \subseteq \mathbb{N}$ at random by selecting each $x \in \mathbb{N}$ to be an element of $\mathfrak{X}$ independently at random with probability $p_x$. We will show such a set $\mathfrak{X}$ satisfies the conclusions of Theorem \ref{thm:main} with non-zero probability. This will complete the proof. Let $t_x$ be the characteristic random variable for the $x \in \mathbb{N}$, so that $t_x = 1$ if $x \in \mathfrak{X}$ and $t_x = 0$ otherwise. Note that $P[t_x= 1] = p_x$ and $P[t_x = 0] = 1-p_x$. For $\delta > 0$ and large real $\Lambda > 0$, define the random variable 
\[ Q_{\mathfrak{X},s,\theta, \delta}(\Lambda) = \sum_{\mathbf{x} \in \mathcal{B}} \prod_{j=1}^s t_{x_j}, \]
where $\mathcal{B} = \mathcal{B}(\delta)$ is the set of all $\mathbf{x} = (x_1,\ldots,x_s) \in \mathbb{N}^s$ satisfying \eqref{ineq:main} and 
\begin{align} \label{eq:order}
x_1 \leq \cdots \leq x_s 
\end{align}
with $x_s \leq Y$. 
There are at most  $O(1)$ solutions to \eqref{ineq:main} with $x_i > Y$ for some $i$. Hence, using \eqref{eq:order}, we have 
\begin{align} \label{eq:ordersandwich}
Q_{\mathfrak{X},s,\theta, \delta}(\Lambda) \leq R_{\mathfrak{X},s,\theta,\delta}(\Lambda) \leq s!Q_{\mathfrak{X},s,\theta, \delta}(\Lambda)+O(1). 
\end{align}
Thus, it suffices to show $Q_{\mathfrak{X},s,\theta, \tau}(\Lambda) \asymp \tau\log(\Lambda)$ for $\tau > 0$ and large $\Lambda > 0$ with positive probability. 

We now list the probabilistic lemmas needed for the proof. The first is a polynomial concentration lemma due to Vu \cite{VuConc}. To state it, we must first introduce some notation. Let $t_1, \ldots, t_n$ be independent, $\{0,1\}$-valued random variables, and let $\mathcal{F}(t_1,\ldots,t_n)$ be a polynomial of degree $d$. We say $\mathcal{F}$ is positive if all of its non-zero coefficients are positive, and we say $\mathcal{F}$ is normal if all of its coefficients are at most $1$ in size. For a set of indices $A$ with $|A| \leq d$, with possible repetitions, write $\partial_A\mathcal{F}$ to denote the partial derivative of $\mathcal{F}$ with respect to the variables with indices in $A$. Write $E[\mathcal{F}]$ and $E_A[\mathcal{F}]$ to denote the expectations of $\mathcal{F}$ and $\partial_A \mathcal{F}$ respectively.  

\begin{lemma} \label{lem:polyconc}
Fix $s \in \mathbb{N}$ and let $k, \beta$ and $\epsilon$ be fixed positive real numbers. Then there is a constant $C = C(s,k, \beta, \epsilon)$ with the following property. Whenever $\mathcal{F} = \mathcal{F}(t_1,\ldots, t_n)$ is a positive, normal, homogeneous polynomial of degree $s$ satisfying 
\begin{enumerate}
    \item $E[\mathcal{F}] > C \log(n)$
    \item For all sets of indices $A$ satisfying $1 \leq |A| \leq s-1$, one has $E_A [\mathcal{F}] < n^{-\beta}$,
\end{enumerate}
Then one has 
\[ P[ |\mathcal{F}- E[\mathcal{F}]| > \epsilon E[\mathcal{F}] ] < n^{-2k}. \]
\end{lemma}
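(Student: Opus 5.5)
This is Vu's polynomial concentration inequality (Kim--Vu type). It is quoted from \cite{VuConc} and the paper will simply cite it, so the plan below only sketches how one would prove it directly. The approach is the Kim--Vu moment/martingale method, run by induction on the degree $s$.

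\emph{Setting up the induction.} Write $\mathcal{F} = \sum_{e} c_e \prod_{i \in e} t_i$ over multisets $e$ of size $s$, with $0 \le c_e \le 1$. For $\mathcal{E}_j = \max_{|A| = j} E_A[\mathcal{F}]$, the hypotheses give $\mathcal{E}_0 = E[\mathcal{F}] > C\log n$ and $\mathcal{E}_j < n^{-\beta}$ for $1 \le j \le s-1$. Normality gives $\mathcal{E}_s \le 1$. The base case $s=1$ is a Chernoff bound for a sum of independent bounded variables with mean exceeding $C\log n$. Choosing $C$ large in terms of $k,\epsilon$ makes the tail $\exp(-c\epsilon^2 E[\mathcal{F}])$ smaller than $n^{-2k}$.

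\emph{Inductive step.} Expose $t_1,\dots,t_n$ one at a time and form the Doob martingale $Z_m = E[\mathcal{F}\mid t_1,\dots,t_m]$. The $m$th increment is bounded by $|\partial_m \mathcal{F}|$ evaluated with the unexposed variables replaced by their means. Its conditional variance is at most $p_m\, (\text{that quantity})^2$. Each partial derivative $\partial_m \mathcal{F}$ is a positive normal polynomial of degree $s-1$. Its expectation is at most $\mathcal{E}_1 < n^{-\beta}$, and its own higher derivatives are bounded by the $\mathcal{E}_j$. By induction (or by a cruder bound, since its mean is polynomially small), with probability $1-n^{-3k}$ every $\partial_m\mathcal{F}$ is at most some $\lambda = O(\log n)^{s-1}$-type quantity, and in fact $O(1)$ once the $n^{-\beta}$ savings are used. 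On that good event, the sum of the conditional variances is at most $\lambda\sum_m p_m E[\partial_m\mathcal{F}] \le s\lambda E[\mathcal{F}]$. Freedman's (or Azuma--Bernstein's) inequality, applied to the martingale stopped at the first bad time, then gives
\[ P\big[|\mathcal{F}-E\mathcal{F}| > \epsilon E\mathcal{F}\big] \le 2\exp\Big(-\frac{\epsilon^2 (E\mathcal{F})^2}{2(s\lambda E\mathcal{F} + \lambda\epsilon E\mathcal{F})}\Big) + n^{-3k}. \]
With $E\mathcal{F} > C\log n$ and $C$ large, this is below $n^{-2k}$.

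\emph{Main obstacle.} The delicate step is controlling the increment bound $\lambda$ uniformly with high probability. The derivatives $\partial_m\mathcal{F}$ have tiny expectation, so the relative concentration statement cannot be applied to them directly. Instead one needs an upper-tail estimate of the form $P[\mathcal{G} > K] \le n^{-3k}$ for a positive normal polynomial $\mathcal{G}$ of lower degree whose mean is at most $n^{-\beta}$. I would prove this by a separate induction using a disjointness and counting argument: $\mathcal{G} > K$ forces many disjoint monomials to be active, and the expected number of such configurations is at most $\mathcal{E}_0(\mathcal{G})^{r}/r!$ plus terms controlled by the higher $\mathcal{E}_j(\mathcal{G})$. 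The $n^{-\beta}$ smallness makes a bounded $K = K(s,k,\beta)$ suffice. This is where the hypothesis (2) with its power savings is indispensable, and where the constant $C(s,k,\beta,\epsilon)$ is ultimately determined.
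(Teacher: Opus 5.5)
The paper gives no argument for this lemma; it simply quotes Lemma~1.3 of \cite{VuRef} (see also \cite{VuConc}). So your identification of the statement as Vu's concentration inequality, to be cited, is exactly the paper's proof.

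The direct sketch you add, however, has a genuine gap at the variance step. The $m$th increment is $(t_m-p_m)G_m$, where $G_m=E[\partial_m\mathcal{F}\mid t_1,\dots,t_{m-1}]$. On the event $\max_m G_m\le\lambda$ you therefore only get $\sum_m p_m(1-p_m)G_m^2\le\lambda W$ with $W=\sum_m p_mG_m$, and $W$ is a \emph{random} positive polynomial of degree $s-1$. The identity $\sum_m p_mE[\partial_m\mathcal{F}]=sE[\mathcal{F}]$ only computes $E[W]$; it gives no high-probability bound, so Freedman's inequality cannot yet be invoked.

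Grouping by monomials, $W=\sum_{j=0}^{s-1}\mathcal{F}^{(j)}$. Here $\mathcal{F}^{(j)}$ is obtained from $\mathcal{F}$ by keeping, in each monomial, its $j$ variables of smallest index and replacing the remaining ones by their means. For $1\le j\le s-1$, each $\mathcal{F}^{(j)}$ is positive, homogeneous of degree $j$, and normal: its coefficient of $\prod_{i\in B}t_i$ is at most $E_B[\mathcal{F}]<n^{-\beta}$. Moreover $E[\mathcal{F}^{(j)}]=E[\mathcal{F}]>C\log n$ and $E_A[\mathcal{F}^{(j)}]\le E_A[\mathcal{F}]<n^{-\beta}$. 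So it is the lemma itself at degree $j<s$, with $k$ enlarged, that gives $W\le s(1+\epsilon)E[\mathcal{F}]$ with high probability. This is where the induction on the degree genuinely enters, and it is missing from your plan.

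There are two smaller points. First, the increment bound cannot come from that induction, since the derivatives violate hypothesis (1); it needs your separate small-mean upper-tail estimate. That estimate must be applied to $G_m$ itself rather than to $\partial_m\mathcal{F}$, because a high-probability bound for $\partial_m\mathcal{F}$ does not control its conditional expectation. This is harmless: $G_m$ is again positive and normal, with $E_B[G_m]=E_{B\cup\{m\}}[\mathcal{F}]$ for $B\subseteq\{1,\dots,m-1\}$, so it satisfies the hypotheses. Second, in the disjointness argument for that estimate, the expected number of $r$ pairwise disjoint active monomials is bounded by $\bigl(\sum_{e:\,c_e>0}\prod_{i\in e}p_i\bigr)^r/r!$. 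This is controlled by $E[\mathcal{G}]=\sum_e c_e\prod_{i\in e}p_i$ only if the nonzero coefficients are bounded below. Normal polynomials with very small coefficients therefore need additional care at that step.
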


\begin{proof} This is Lemma 1.3 in \cite{VuRef}. See also \cite{VuConc}. 
\end{proof}

The second probabilistic lemma we need is the well-known Borel-Cantelli Lemma. 

\begin{lemma} \label{lem:BC}
Let $(A_i)_{i=1}^{\infty}$ be a sequence of events in a probability space. Suppose $\sum P(A_i) < \infty$. Then with probability $1$ at most a finite number of the events $A_i$ can occur. 
\end{lemma}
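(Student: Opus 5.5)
The statement to prove is the Borel--Cantelli Lemma (Lemma \ref{lem:BC}), and I would give the standard first-moment argument. Write $A_i$ for the events and let $E$ be the event that infinitely many of the $A_i$ occur. I would first express $E$ set-theoretically as the limit superior
\[ E = \bigcap_{n=1}^{\infty} \bigcup_{i \geq n} A_i . \]
A sample point lies in $E$ exactly when, for every $n$, it belongs to some $A_i$ with $i \geq n$, which is the same as lying in infinitely many $A_i$. This identity also shows that $E$ is measurable, being a countable intersection of countable unions of events.

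Next, for each fixed $n$ the event $E$ is contained in the tail union $\bigcup_{i \geq n} A_i$. Monotonicity and countable subadditivity of the probability measure then give
\[ P(E) \leq P\Big( \bigcup_{i \geq n} A_i \Big) \leq \sum_{i \geq n} P(A_i). \]

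Since $\sum_i P(A_i) < \infty$, the tail sums $\sum_{i \geq n} P(A_i)$ tend to $0$ as $n \to \infty$. The left-hand side $P(E)$ does not depend on $n$, so letting $n \to \infty$ yields $P(E) = 0$. Hence with probability $1$ only finitely many of the $A_i$ occur, which is the claim.

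No step here is a genuine obstacle. The only points needing care are justifying the limsup identity and noting that subadditivity applies to countable unions; no independence hypothesis is needed. In the paper's application, $A_i$ will be the failure event from Lemma \ref{lem:polyconc} at a sequence of values of $\Lambda$, where the bound $n^{-2k}$ makes the series summable.
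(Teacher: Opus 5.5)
Your proof is correct. The limsup identity, the bound $P(E) \leq \sum_{i \geq n} P(A_i)$ from countable subadditivity, and letting $n \to \infty$ together make up the standard first Borel--Cantelli argument, and no independence is needed. The paper gives no argument of its own and simply cites Lemma 1.6 of \cite{VuRef}, so your self-contained proof supplies what that citation stands for.
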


\begin{proof} This is Lemma 1.6 in \cite{VuRef}. \end{proof}

Let $\mathcal{B}_1$ and $\mathcal{B}_0$ be the subsets of $\mathcal{B}$ satisfying $x_1 > Y_s$ and $x_1 \leq Y_s$ respectively. Then clearly $\mathcal{B}_1 \sqcup \mathcal{B}_0 = \mathcal{B}$, and $Q_{\mathfrak{X},s,\theta, \delta}(\Lambda) = Q^1_{\mathfrak{X},s,\theta,\tau/2}(\Lambda)+ Q^0_{\mathfrak{X},s,\theta,\tau/2}(\Lambda)$, where 
\[ Q^1_{\mathfrak{X},s,\theta,\delta}(\Lambda) = \sum_{\mathbf{x} \in \mathcal{B}_1} \prod_{j=1}^s t_{x_j} \quad \text{ and } \quad Q^0_{\mathfrak{X},s,\theta,\tau/2}(\Lambda) = \sum_{\mathbf{x} \in \mathcal{B}_0} \prod_{j=1}^s t_{x_j}. \]
Both $Q^1_{\mathfrak{X},s,\theta,\delta}(\Lambda)$ and $Q^0_{\mathfrak{X},s,\theta,\delta}(\Lambda)$ are positive, normal, homogeneous polynomials in the $\{0,1\}$-valued random variables $t_1,\ldots,t_{\lfloor \Lambda \rfloor}$ of degree $s$. We will apply Lemma \ref{lem:polyconc} to $Q^1_{\mathfrak{X},s,\theta,\tau/2}(\Lambda)$ and bound $Q^0_{\mathfrak{X},s,\theta,\tau/2}(\Lambda)$, for which we need some lemmas. 

\begin{lemma} \label{lem:largesolns}
Fix $\tau > 0$. For sufficiently large $\Lambda$, one has $E[Q^1_{\mathfrak{X},s,\theta,\tau/2}(\Lambda)] \asymp c^s \tau \log(\Lambda)$, where the implicit constants are independent of $c$, $\tau$ and $\Lambda$. 
\end{lemma}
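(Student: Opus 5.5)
By independence and linearity, $E[Q^1_{\mathfrak{X},s,\theta,\tau/2}(\Lambda)] = \sum_{\mathbf{x}\in\mathcal{B}_1}\prod_{j} p_{x_j}$, where the summation condition is $|x_1^\theta+\cdots+x_s^\theta-\Lambda|<\tau/2$. The plan is to compare this weighted count with $U_{\tau/2,s}(\Lambda;\mathcal{T})$ from Theorem \ref{thm:DH}, which is already known to be $\asymp \tau$. Since $x_j > Y_s \to\infty$, we have $p_{x_j} = c\,x_j^{-1+\theta/s}\log(x_j)^{1/s}$ once $\Lambda$ is large (the minimum with $1$ is inactive). Moreover every $x_j$ lies in $(Y_s,Y]$, so $\log x_j \asymp \log Y \asymp \log\Lambda$, with constants depending only on $s,\theta$. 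Hence
\[ E[Q^1] \asymp c^s \log(\Lambda) \sum_{\mathbf{x}\in\mathcal{B}_1} (x_1\cdots x_s)^{-1+\theta/s}, \]
and it remains to show that this ordered weighted sum is $\asymp \tau$.

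For the upper bound, I would drop the ordering \eqref{eq:order}. The ordered sum is then at most the full weighted count over $(Y_s,Y]^s$. I would split this count according to whether the largest coordinate exceeds $c_0Y$. If $\sum x_j^\theta > \Lambda - \tau/2$, then some $x_j^\theta \geq (\Lambda-\tau/2)/s$, and so $x_j \geq (2s)^{-1/\theta}Y = c_0 Y$ for large $\Lambda$. Consequently every solution has at least one coordinate in $(c_0Y,Y]$. By symmetry, the full count is therefore at most $s\,U_{\tau/2,s}(\Lambda;\mathcal{T}) \leq s\tau\Upsilon_2$ by Theorem \ref{thm:DH}.

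For the lower bound, each unordered tuple counted in $U_{\tau/2,s}(\Lambda;\mathcal{T})$ appears, after sorting, as an element of $\mathcal{B}_1$ with the same weight. Each sorted tuple arises from at most $s!$ tuples of $\mathcal{T}$, so the ordered sum is at least $U_{\tau/2,s}(\Lambda;\mathcal{T})/s! \geq \tau\Upsilon_1/s!$. The sorted tuple indeed satisfies $x_1 > Y_s$ and $x_s \leq Y$, so it belongs to $\mathcal{B}_1$. Combining the two bounds gives $E[Q^1] \asymp c^s\tau\log\Lambda$, with constants depending only on $s$ and $\theta$.

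The main point needing care is that the implicit constants are independent of $\tau$. The conversion from the expectation to $U$ involves only the $\log$ comparison and combinatorial factors $s,\ s!$, none of which depend on $\tau$. The $\tau$-dependence thus enters only through Theorem \ref{thm:DH}, whose constants $\Upsilon_1,\Upsilon_2$ depend only on $s$ and $\theta$. One should also check that the threshold "$\Lambda$ sufficiently large" may depend on $\tau$ and $c$; this is harmless for the statement.
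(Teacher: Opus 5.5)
Your proposal is correct and is essentially the paper's own argument: both compare $E[Q^1_{\mathfrak{X},s,\theta,\tau/2}(\Lambda)]$ with $c^s\log(\Lambda)\,U_{\tau/2,s}(\Lambda;\mathcal{T})$ via $\log x_j\asymp\log\Lambda$ on $(Y_s,Y]$ and an $s!$ reordering loss, and then invoke Theorem \ref{thm:DH}. The only difference is that the paper gets the upper bound more directly: a sorted tuple in $\mathcal{B}_1$ already has $x_s>c_0Y$ and hence lies in $\mathcal{T}$, so your symmetrization factor $s$ is unnecessary, though harmless.

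One subtlety that you and the paper both pass over is that $E[\prod_j t_{x_j}]=\prod_j p_{x_j}$ requires the $x_j$ to be distinct, whereas \eqref{eq:order} allows ties and $t_x^2=t_x$. Tuples with repeated entries only increase the expectation, so the lower bound is unaffected. For the upper bound, however, their contribution should be shown to be $o(c^s\log\Lambda)$, which can be done by adapting the H\"older and mean value argument of Lemmas \ref{lem:count} and \ref{lem:approxderivmean}.
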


\begin{proof} Since $\mathcal{B}_1$ only contains tuples satisfying \eqref{eq:order} with $Y_s < x_1$, it follows that $x_i > Y_s$ for all $i$, and $x_s > c_0Y$ for sufficiently large $Y$. Thus, 
\[ E[Q^1_{\mathfrak{X},s,\theta,\delta}(\Lambda)] = \sum_{\mathbf{x} \in \mathcal{B}_1} \prod_{j=1}^s p_{x_j} = c^s \sum_{\mathbf{x} \in \mathcal{B}_1} \prod_{j=1}^s x_j^{-1+\theta/s}\log(x_j)^{1/s} \leq c^s\log(\Lambda) \sum_{\mathbf{x} \in \mathcal{B}_1} \prod_{j=1}^s x_j^{-1+\theta/s}. \]
There are at most $O(1)$ many solutions to \eqref{ineq:main} with $x_i > Y$ for some $Y$, and the contribution from such sums to the sum on the right above is $\ll Y^{-1+\theta/s}$. Hence, 
\[ E[Q^1_{\mathfrak{X},s,\theta,\delta}(\Lambda)] \leq c^s \log(\Lambda) U_{s,\delta}(\Lambda; \mathcal{T}) +o(1). \]

By reordering the variables if needed, note that the ordering condition \eqref{eq:order} reduces the number of solutions counted in $\mathcal{B}_1$ to \eqref{ineq:main} by at most a factor of $s!$. Since $x_i > Y_s$ for all $i$, we see that $\log(x_i) \geq \log(\Lambda^{\frac{1}{s\theta}})$ for any $x_i$ part of a tuple counted in $\mathcal{B}_1$. Hence, 
\[ E[Q^1_{\mathfrak{X},s,\theta,\delta}(\Lambda)] =  c^s \sum_{\mathbf{x} \in \mathcal{B}_1} \prod_{j=1}^s x_j^{-1+\theta/s}\log(x_j)^{1/s} \gg c^s \log(\Lambda) U_{s,\delta}(\Lambda; \mathcal{T}). \]
Thus, $E[Q^1_{\mathfrak{X},s,\theta,\delta}(\Lambda)] \asymp c^s \log(\Lambda) U_{s,\delta}(\Lambda; \mathcal{T})$. Let $\delta = \tau/2$. The claim follows by Theorem \ref{thm:DH}. 
\end{proof}

\begin{lemma} \label{lem:derivsolns}
Let $A$ be any set of indices $i_1,\ldots,i_q$ with $1 \leq i_j \leq Y$ for $j = 1,\ldots,q$, such that $1 \leq q \leq s-1$. Then $E_A[Q^1_{\mathfrak{X},s,\theta,\tau/2}(\Lambda)] \ll c^s \Lambda^{-1/s^3}$. 
\end{lemma}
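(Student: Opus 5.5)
Differentiating $Q^1_{\mathfrak{X},s,\theta,\tau/2}(\Lambda)$ with respect to the variables $t_{i_1},\ldots,t_{i_q}$ kills every monomial not containing all of these indices. What remains is a sum over tuples $\mathbf{x}\in\mathcal{B}_1$ in which the values $i_1,\ldots,i_q$ occur among the coordinates, of the product of $t_{x_j}$ over the remaining $m=s-q$ coordinates. Repeated indices only affect multiplicities, which are bounded in terms of $s$. Taking expectations therefore gives
\[ E_A[Q^1] \ll \sum_{\mathbf{y}} \prod_{j=1}^{m} p_{y_j} \ll c^s\log(\Lambda)\sum_{\mathbf{y}}\prod_{j=1}^m y_j^{-1+\theta/s}, \]
where $\mathbf{y}=(y_1,\ldots,y_m)$ runs over solutions of $|y_1^\theta+\cdots+y_m^\theta-\Lambda_0|<\tau/2$ with $\Lambda_0=\Lambda-i_1^\theta-\cdots-i_q^\theta$, all $y_j\in(Y_s,Y]$. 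Since $\log(\Lambda)$ is absorbed by the $\epsilon$-loss, it suffices to show this weighted count is $\ll \Lambda^{-1/s^2(s-1)+\epsilon}$, which is smaller than $\Lambda^{-1/s^3}$ once $\epsilon$ is small. Also $p_x\le c\,x^{-1+\theta/s}\log(x)^{1/s}$ and $c\ge1$, so the factor $c^{m}\le c^s$ is harmless.

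Split into two cases according to the size of $\Lambda_0$. If $\Lambda_0\le \Lambda^{1/s}$ (including the case where $\Lambda_0$ is negative, when there are no solutions), every $y_j$ satisfies $y_j^\theta\le\Lambda_0+\tau\ll\Lambda^{1/s}$. Combined with $y_j>Y_s=\Lambda^{1/(s\theta)}$, this confines the $y_j$ to a range of length $O(Y_s)$ where each weight is $\ll Y_s^{-1+\theta/s}$. I would bound the count crudely: fix $y_1,\ldots,y_{m-1}$, so that $y_m$ lies in an interval of length $\ll \tau\, Y_s^{1-\theta}$, which contains $O(1)$ integers. This gives a bound of the shape $Y_s^{(m-1)}\cdot Y_s^{m(-1+\theta/s)}\ll Y_s^{\theta m/s-1}$. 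This is not obviously small, so I would sharpen it by using that the range forces all $y_j\asymp Y_s$ in a narrow window. If that fails, I would instead apply Lemma \ref{lem:count} with intervals $[1,\Lambda^{1/(s\theta)}]$ and Lemma \ref{lem:approxfullrange}. The cleanest route is to note that the ordering in $\mathcal{B}_1$ means the differentiated variables cannot all be tiny while $x_s>c_0Y$. So either some free variable $y_j$ is the large coordinate $x_s$, which forces $\Lambda_0\gg\Lambda$, or the large coordinate is among the $i_j$, in which case $\Lambda_0$ is bounded below by $\sum y_j^\theta\ge Y_s^\theta=\Lambda^{1/s}$ up to $O(\tau)$. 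This is exactly why Lemma \ref{lem:approxderivmean} assumes $\Lambda^{1/s}\ll\Lambda_0\ll\Lambda$, and it eliminates the first case.

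In the main case $\Lambda^{1/s}\ll\Lambda_0\ll\Lambda$, set $Y_0=\Lambda_0^{1/\theta}$. Among the $m$ free variables, the largest satisfies $y^\theta\ge(\Lambda_0-\tau)/m$, so it lies in $(\tilde\eta_qY_0,Y_0]$, since $\tilde\eta_q^\theta=(2m)^{-1}<1/m$ leaves room for the $\tau$ error. The others lie in $[1,Y_0]$. By symmetry, at the cost of a factor $m$, I place the largest variable last, and the weighted count is bounded by $U_{\tau/2,m}(\Lambda_0;I_1,\ldots,I_m)$ with the intervals of Lemma \ref{lem:approxderivmean}. That lemma gives $\ll\Lambda^{-\frac{1}{s^2(s-1)}+\epsilon}$. (Its statement writes $U_{\tau/2,s}$, but its proof treats the $m$-variable count.) Since $s^2(s-1)<s^3$, choosing $\epsilon$ small yields $E_A[Q^1]\ll c^s\Lambda^{-1/s^3}$.

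The main obstacle is the lower bound $\Lambda_0\gg\Lambda^{1/s}$, i.e.\ ruling out degenerate configurations where the fixed indices nearly exhaust $\Lambda$. The argument is that every free variable exceeds $Y_s$, so $\Lambda_0\ge mY_s^\theta-\tau/2\gg\Lambda^{1/s}$ automatically. This is why the restriction to $\mathcal{B}_1$ (the condition $x_1>Y_s$) is made.
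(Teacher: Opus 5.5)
Your proposal is correct and is essentially the paper's own argument: you bound $E_A[Q^1_{\mathfrak{X},s,\theta,\tau/2}(\Lambda)]$ by $c^m\log(\Lambda)$ times the weighted $m$-variable count around $\Lambda_0=\Lambda-\sum_{x\in A}x^\theta$, use $y_j>Y_s$ to get $\Lambda_0\gg\Lambda^{1/s}$, place the largest free variable in $(\tilde\eta_qY_0,Y_0]$, and invoke Lemma \ref{lem:approxderivmean}, which you rightly read as an $m$-variable bound. The one detail you pass over is that when $m=1$ the free variable can slightly exceed $Y_0$, because $y^\theta$ may lie in $[\Lambda_0,\Lambda_0+\tau/2)$; the paper absorbs these $O(1)$ solutions into an error term $O(\Lambda^{-1/(s^2(s-1))})$, and your exploratory case $\Lambda_0\le\Lambda^{1/s}$ is never needed.
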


\begin{proof} Let $m = s-|A| = s-q$. For $\mathbf{x} \in \mathcal{B}$, Define 
\[ \Lambda_0 = \Lambda-\sum_{x\in A} x^{\theta}. \]
Since $\mathbf{x} \in \mathcal{B}$ satisfies \eqref{ineq:main}, we must have that 
\begin{align} \label{eq:derivsum}
\Big| \sum_{x_j \in \mathbf{x} \setminus A} x_j^{\theta} - \Lambda_0 \Big| < \delta. 
\end{align}
Relabel the elements of $\mathbf{x} \setminus A$ to be $y_1 \leq \cdots \leq y_m$. Recall that $\tilde\eta_q = (2(s-q))^{-\frac{1}{\theta}}$, and let $Y_0 = \Lambda_0^{\frac{1}{\theta}}$. Since $Y_s < y_1 \leq  \cdots \leq y_m$ and $m \geq 1$, we have from \eqref{eq:derivsum} that $\Lambda_0 \gg Y_s^{\theta} = \Lambda^{1/s}$. It also follows that $\tilde\eta_qY_0 < y_m$ for large $\Lambda$. 

Since $Q^1_{\mathfrak{X},s,\theta,\delta}(\Lambda)$ is a positive polynomial of degree at most $s$ and $|A| \leq s-1$, we see that 
\[ \partial_A Q^1_{\mathfrak{X},s,\theta,\delta}(\Lambda) \leq s! \sum_{\substack{\mathbf{x} \in \mathcal{B}_1 \\ A \subset \mathbf{x}}} \, \prod_{x_j \in \mathbf{x}\setminus A} t_{x_j} . \]
Therefore, it follows for large $\Lambda$ that 
\[ E_A[Q^1_{\mathfrak{X},s,\theta,\delta}(\Lambda)] \ll c^m \sum_{\substack{\mathbf{x} \in \mathcal{B}_1 \\ A \subset \mathbf{x}}} \, \prod_{x_j \in \mathbf{x}\setminus A} x_j^{-1+\theta/s}\log(x_j)^{1/s} \leq c^m \log(\Lambda) \sum_{\substack{\mathbf{x} \in \mathcal{B}_1 \\ A \subset \mathbf{x}}} \, \prod_{x_j \in \mathbf{x}\setminus A} x_j^{-1+\theta/s}. \]

Let $\widetilde{\mathcal{B}}$ be the set of tuples $\mathbf{y} = (y_1,\ldots,y_m)$ that are relabelings of tuples $\mathbf{x}\setminus A$ satisfying \eqref{eq:derivsum}
with $y_i \in [1,Y_0]$ for $1 \leq i \leq s-1$ and $y_m \in (\tilde\eta_qY_0,Y_0]$. Therefore, 
\[ \sum_{\substack{\mathbf{x} \in \mathcal{B}_1 \\ A \subset \mathbf{x}}} \, \prod_{x_j \in \mathbf{x}\setminus A} x_j^{-1+\theta/s} \leq \sum_{\substack{\mathbf{y} \in \mathcal{\widetilde{B}}}} \, \prod_{i = 1}^m y_i^{-1+\theta/s} +O(\Lambda^{-\frac{1}{s^2(s-1)}}) = U_{\delta,s}(\Lambda_0; I_1,\ldots,I_m)+O(\Lambda^{-\frac{1}{s^2(s-1)}}), \]
where $I_1 = \cdots = I_{m-1} = [1,X_0]$ and $I_m = [\eta_q^*X_0,X_0]$. Note that we used the fact that there are at most $O(1)$ solutions to \eqref{eq:derivsum} with $y_i > Y_0$ for some $i$, and such solutions contribute at most 
\[ Y_0^{-1+\theta/s} \ll \Lambda_0^{-1/\theta+1/s} \ll \Lambda^{\frac{1}{s}(\frac{1}{s}-\frac{1}{\theta})} \ll \Lambda^{-\frac{1}{s^2(s-1)}}, \] 
evident from the bounds $\Lambda_0 \gg \Lambda^{\frac{1}{s}}$ and $s > \theta+1$. Letting $\delta = \tau/2$ and applying Lemma \ref{lem:approxderivmean} one has for any $\epsilon > 0$ that 
\[ E_A[Q^1_{\mathfrak{X},s,\theta,\tau/2}(\Lambda)] \ll c^m \log(\Lambda) (U_{\delta,s}(\Lambda_0; I_1,\ldots,I_m)+O(\Lambda^{-\frac{1}{s^2(s-1)}})) \ll_{\epsilon} c^s \Lambda^{-1/s^3}. \]
\end{proof}

\begin{lemma} \label{lem:smallsolns}
One has $E[Q^0_{\mathfrak{X},s,\theta,\tau/2}(\Lambda)] \ll c^s \Lambda^{-1/s^3}$. 
\end{lemma}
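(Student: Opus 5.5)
The expectation is a weighted count over the ordered solutions in $\mathcal{B}_0$, and I will bound it by the quantity controlled in Lemma \ref{lem:approxsmallmean}. By independence of the $t_x$ and the ordering \eqref{eq:order}, each $\mathbf{x}\in\mathcal{B}_0$ has distinct-or-repeated entries. Where there are repetitions, $E[\prod t_{x_j}]$ is at most $\prod_{\text{distinct}} p_x$. I first reduce to the case where this is at most $\prod_j p_{x_j}$, up to a harmless loss.

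In fact the cleanest route is to bound $E[\prod_j t_{x_j}]=\prod_{x\in\{x_j\}}p_x$ directly. Solutions with repeated entries can be handled by absorbing them into a count with fewer variables, as in the proof of Lemma \ref{lem:derivsolns}. First I would check whether $p_x\le 1$ makes the repetition issue harmless. If not, I would treat repeated tuples separately via Lemma \ref{lem:approxderivmean}.

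So the main term is
\[ E[Q^0]\ll c^s\log(\Lambda)\sum_{\mathbf{x}\in\mathcal{B}_0}\prod_{j=1}^s x_j^{-1+\theta/s}. \]
This uses $\log(x_j)^{1/s}\le\log(\Lambda)^{1/s}$ and $p_x\le c\,x^{-1+\theta/s}\log(x)^{1/s}$.

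Next I localize the variables for tuples in $\mathcal{B}_0$. Here $x_1\le Y_s$ and every $x_j\le Y$. Since $x_s$ is the largest entry, $s x_s^\theta\ge \Lambda-\tau/2$, so $x_s\ge (2s)^{-1/\theta}Y=\eta_1\eta_2^{-1}Y$ for large $\Lambda$. Hence $\mathcal{B}_0$ embeds into the box $I_1\times\cdots\times I_s$ of Lemma \ref{lem:approxsmallmean}, with $I_1=[1,Y_s]$, $I_m=[1,Y]$ for $2\le m\le s-1$, and $I_s=[\eta_1\eta_2^{-1}Y,Y]$. The weighted sum is therefore at most $U_{\tau/2,s}(\Lambda;I_1,\ldots,I_s)$. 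That lemma gives the bound $\Lambda^{-\frac{1}{s^2(s-1)}+\epsilon}$.

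Finally, combining the two displays gives
\[ E[Q^0]\ll c^s\log(\Lambda)\,\Lambda^{-\frac{1}{s^2(s-1)}+\epsilon}. \]
Choosing $\epsilon$ small and using $\frac{1}{s^2(s-1)}>\frac{1}{s^3}$, this is $\ll c^s\Lambda^{-1/s^3}$.

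The main obstacle is the repeated-entry issue: $E[t_x^2]=p_x\neq p_x^2$. I expect it to be dispatched like the $A\subset\mathbf{x}$ reduction above. A tuple with a repeated value collapses to a solution in fewer distinct variables with an effective target, and that is bounded via Lemma \ref{lem:count} with $m<s$. Alternatively one can simply note that $Q$ is a polynomial in which $t_x^2=t_x$, and bound it using the squarefree monomials only.
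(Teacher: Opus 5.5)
Your main line is the paper's proof. You bound the expectation by $c^s\log(\Lambda)$ times the weighted sum over $\mathcal{B}_0$, localize $x_1\le Y_s$ and $x_s>(2s)^{-1/\theta}Y=c_0Y$, embed into the box of Lemma \ref{lem:approxsmallmean}, and absorb $\log(\Lambda)\Lambda^{\epsilon}$ using $\frac{1}{s^2(s-1)}>\frac{1}{s^3}$.

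The repeated-entry issue you flag is real, and the paper's proof passes over it. Its first inequality is the same as yours, and it needs $E[\prod_j t_{x_j}]\le\prod_j p_{x_j}$. Your first two remedies do not work, however. Since $p_x\le 1$ gives $p_x\ge p_x^2$, one has $E[\prod_j t_{x_j}]=\prod_{x\in\{x_1,\ldots,x_s\}}p_x\ge\prod_j p_{x_j}$, so the inequality goes the wrong way. For a value near $Y$ that occurs twice, the discrepancy is a factor of roughly $c^{-1}Y^{1-\theta/s}$, which is not a harmless loss. Passing to squarefree monomials via $t_x^2=t_x$ only restates the problem, because those monomials carry exactly the larger expectation $\prod_{x\in\{x_1,\ldots,x_s\}}p_x$.

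Collapsing repeated entries is the right idea, but Lemma \ref{lem:approxderivmean} does not apply as stated. Take a tuple whose distinct values are $y_1<\cdots<y_r$ ($r\le s-1$), with multiplicities $m_1,\ldots,m_r$. It contributes $\prod_{i=1}^r p_{y_i}\ll c^r\log(\Lambda)\prod_{i=1}^r y_i^{-1+\theta/s}$, and it satisfies $|m_1y_1^\theta+\cdots+m_ry_r^\theta-\Lambda|<\tau/2$, an inequality with integer coefficients.

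Here is the missing step. For each of the $O_s(1)$ multiplicity patterns, rerun the proof of Lemma \ref{lem:count} in its case of at most $s-1$ variables, with $g(m_i\alpha;J_i)$ in place of $g(\alpha;I_i)$. Take $J_i=[1,Y]$ for $i<r$ and $J_r=(c_0Y,Y]$; the latter is legitimate because $y_r=x_s$. Then use the substitution $\int_{-\tau^{-1}}^{\tau^{-1}}|g(m\alpha;J)|^{s-1}\d\alpha=m^{-1}\int_{-m\tau^{-1}}^{m\tau^{-1}}|g(\beta;J)|^{s-1}\d\beta$. Apply Lemmas \ref{lem:approxweightmean} and \ref{lem:approxfullrange} with $w=s-1$ and $\kappa=m\tau^{-1}$. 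This bounds the contribution of repeated-entry tuples by $\ll c^s\log(\Lambda)\Lambda^{-\frac{1}{s(s-1)}+\epsilon}$, using $c\ge 1$, which is more than enough.

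With this supplement your argument proves the lemma, and it closes a gap the paper's proof leaves open.
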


\begin{proof} By definition of $\mathcal{B}_0$, one also has $x_1 \leq X_s$, and using \eqref{ineq:main} and \eqref{eq:order} we deduce as before that $c_0Y < x_s$. and $x_i \leq X$ for $i = 2,\ldots,s-1$. Let $I_1=[1,Y_s], I_2=\cdots=I_{s-1} = [1,Y]$ and $I_s = (c_0Y,Y]$. Noticing again that there are at most $O(1)$ solutions to \eqref{ineq:main} with $x_i > Y$ for some $i$, which contribute at most $\ll Y^{-1+\theta/s} \ll \Lambda^{-\frac{1}{s(s-1)}}$ since $s > \theta+1$, we see that
\begin{align*} E[Q^0_{\mathfrak{X},s,\theta,\tau/2}(\Lambda)] 
\ll c^s \log(\Lambda) \sum_{\mathbf{x} \in \mathcal{B}_0} \prod_{j=1}^s x_j^{-1+\theta/s} \leq c^s \log(\Lambda) (U_{s,\delta}(\Lambda;I_1,\ldots,I_s)+O(\Lambda^{-\frac{1}{s(s-1)}})). 
\end{align*}
By Lemma \ref{lem:approxsmallmean}, we have for any $\epsilon > 0$ that $U_{s,\delta}(\Lambda;I_1,\ldots,I_s) \ll \Lambda^{-\frac{1}{s^2(s-1)}+\epsilon}$,
from which the claim readily follows. 
\end{proof}

It is now useful to discretize the set of real numbers $\Lambda$ that we are approximating. Fix $\tau > 0$ and define $\Lambda_n = n\tau/2$ for $n \in \mathbb{N}$. A calculation shows that it suffices to prove that 
\[ R_{\mathfrak{X},s,\theta,\tau/2}(\Lambda_n) \asymp \tau \log(\Lambda_n) \]
for all sufficiently large $n$ to prove Theorem \ref{thm:main}. 
Indeed, assume $R_{\mathfrak{X},s,\theta,\tau/2}(\Lambda_n) \asymp \tau \log(\Lambda_n)$ for large $\Lambda_n$. Select a large real $\Lambda >0$, and let $m$ be the unique positive integer such that $\Lambda_m = m\tau/2 \leq \Lambda < (m+1)\tau/2 = \Lambda_{m+1}$. Then $\Lambda = \Lambda_m +O(1)$, and for any $x_1,\ldots,x_s \in S$ counted by $R_{\mathfrak{X},s,\theta,\tau/2}(\Lambda_m)$, we have 
\[ \Lambda-\tau < m\tau/2-\tau/2 = \Lambda_m-\tau/2 < x_1^{\theta}+\cdots+x_s^{\theta} <\Lambda_m+\tau/2= m\tau/2+\tau/2 < \Lambda+\tau,  \]
and so $(x_1,\ldots,x_s)$ also satisfies \eqref{ineq:main} with $\delta = \tau$. Hence, 
\[ R_{\mathfrak{X},s,\theta,\tau}(\Lambda) \geq R_{\mathfrak{X},s,\theta,\tau/2}(\Lambda_m) \gg_{s,\theta} \tau \log(\Lambda_m) \gg \tau\log(\Lambda). \]
Next, note that if \eqref{ineq:main} is satisfied for some integers $x_1, \ldots, x_s \in \mathfrak{X}$ with $\delta = \tau$, then 
\[ (m-1)\tau/2- \tau/2 \leq \Lambda-\tau < x_1^{\theta}+\cdots+x_s^{\theta} < \Lambda+\tau < (m+2)\tau/2+\tau/2,  \]
and so $x_1^{\theta}+\cdots+x_s^{\theta}$ differs by less than $\tau/2$ from $(m+i)\tau/2 = \Lambda_{m+i}$ for at least one value of $i \in \{-2,-1,0,1,2 \}$. Since $\Lambda_{m+i} = \Lambda+O(1)$ for these $i$, we have 
\[ R_{\mathfrak{X},s,\theta,\tau}(\Lambda) \leq \sum_{-2 \leq i \leq 2} R_{\mathfrak{X},s,\theta,\tau/2}(\Lambda_{m+i}) \ll \tau\log(\Lambda), \]
and so $R_{\mathfrak{X},s,\theta,\tau}(\Lambda) \asymp \tau \log(\Lambda)$, as needed. 
It thus suffices to show that $R_{\mathfrak{X},s,\theta,\tau/2}(\Lambda_n) \asymp \tau \log(\Lambda_n)$.

We apply Lemma \ref{lem:polyconc} to $F_{\Lambda} = F(t_1,\ldots,t_{\lfloor \Lambda \rfloor} ) = Q^1_{\mathfrak{X},s,\theta,\tau/2}(\Lambda)$. Let $s$ be as defined earlier, let $k = 1$, let $\beta = 1/s^4$, and let $\epsilon > 0$ be sufficiently small. 
By Lemmas \ref{lem:largesolns} and \ref{lem:derivsolns}, $F_{\Lambda}$ satisfies the two conditions required to apply Lemma \ref{lem:polyconc}. Indeed, $E[F_{\Lambda} ] \gg c^s\tau\log(\lfloor \Lambda \rfloor) $ and $E_A[F_{\Lambda} ] < \lfloor \Lambda \rfloor^{-1/s^4} $ for large $\Lambda >0$ any set $A$ with $1 \leq |A| \leq s-1$. Selecting $c$ large enough to match the constant $C = C(s,k,\beta,\epsilon) > 0$ given in Lemma \ref{lem:polyconc}, it follows that 
\[ P[|F_{\Lambda}-E[F_{\Lambda}]| > \epsilon E[F_{\Lambda}]]
< \lfloor \Lambda \rfloor^{-2} \]
for sufficiently large $\Lambda$. Hence, 
\begin{align*} \sum_{n=1}^{\infty} P[|F_{\Lambda_n}-E[F_{\Lambda_n}]| > \epsilon E[F_{\Lambda_n}]]  \ll \sum_{n=1}^{\infty} \lfloor \Lambda_n \rfloor^{-2} \ll \tau^{-2} \sum_{n=1}^{\infty} n^{-2} < \infty. 
\end{align*}
By Lemma \ref{lem:BC}, we conclude that with probability $1$, we have 
\[ (1-\epsilon)E[Q^1_{\mathfrak{X},s,\theta,\tau/2}(\Lambda_n)] \leq Q^1_{\mathfrak{X},s,\theta,\tau/2}(\Lambda_n) \leq (1+\epsilon)E[Q^1_{\mathfrak{X},s,\theta,\tau/2}(\Lambda_n)] \]
for all but finitely many $n$. Hence, with probability $1$, one has $Q^1_{\mathfrak{X},s,\theta,\tau/2}(\Lambda_n) \asymp \tau \log(\Lambda_n)$.

We are now finally ready to prove Theorem \ref{thm:main}. Using Lemma \ref{lem:smallsolns}, one may use the reasoning of \cite{VuRef} on pages 128 and 129 to show that there exists a constant $B$ such that $Q^0_{\mathfrak{X},s,\theta,\tau/2}(\Lambda_n) \leq B$ for all $n \in \mathbb{N}$ with probability at least $4/5$. Using additionally the bound $Q^1_{\mathfrak{X},s,\theta,\tau/2}(\Lambda_n) \asymp \tau \log(\Lambda_n)$ with probability $1$, we see that $Q_{\mathfrak{X},s,\theta, \tau/2}(\Lambda) \asymp \tau\log(\Lambda_n) $ for all sufficiently large $n$ with probability at least $4/5$. Combining this with the observation\eqref{eq:ordersandwich} shows that $R_{\mathfrak{X},s,\theta, \tau/2}(\Lambda) \asymp \tau\log(\Lambda_n) $ for all large $n$ with probability at least $4/5$. Hence, a set chosen randomly with this distribution satisfies the requirements of Theorem \ref{thm:main} with probability at least $4/5$, proving Theorem \ref{thm:main}.

\bibliographystyle{plain}
\bibliography{refs}

@misc{BharMVE,
      title={Mean Value Estimates for a Real-Exponent Analogue of {W}aring's Problem}, 
      author={Bhargava, A.},
      year = {2026+},
      note = {Submitted},
}

@misc{Bharloc,
      title={Sharp Transitions for Localized Solutions to a {D}iophantine Inequality}, 
      author={Bhargava, A.},
      year = {2026+},
      note = {Submitted},
}

@article {Poulias1,
    AUTHOR = {Poulias, C.},
     TITLE = {Diophantine inequalities of fractional degree},
   JOURNAL = {Mathematika},
  FJOURNAL = {Mathematika. A Journal of Pure and Applied Mathematics},
    VOLUME = {67},
      YEAR = {2021},
    NUMBER = {4},
     PAGES = {949--980},
      ISSN = {0025-5793,2041-7942},
   MRCLASS = {11D75 (11D72 11L07 11P55)},
  MRNUMBER = {4311791},
MRREVIEWER = {Weiping\ Li},
       DOI = {10.1112/mtk.12112},
       URL = {https://doi-org.ezproxy.lib.purdue.edu/10.1112/mtk.12112},
}

@incollection {FreemanAsymp,
    AUTHOR = {Freeman, D. E. },
     TITLE = {Asymptotic lower bounds and formulas for {D}iophantine
              inequalities},
 BOOKTITLE = {Number theory for the millennium, {II} ({U}rbana, {IL}, 2000)},
     PAGES = {57--74},
 PUBLISHER = {A K Peters, Natick, MA},
      YEAR = {2002},
      ISBN = {1-56881-146-2},
   MRCLASS = {11D75 (11P55)},
  MRNUMBER = {1956244},
MRREVIEWER = {Scott\ T.\ Parsell},
}

@article {FreemanLB,
    AUTHOR = {Freeman, D. E. },
     TITLE = {Asymptotic lower bounds for {D}iophantine inequalities},
   JOURNAL = {Mathematika},
  FJOURNAL = {Mathematika. A Journal of Pure and Applied Mathematics},
    VOLUME = {47},
      YEAR = {2000},
    NUMBER = {1-2},
     PAGES = {127--159},
      ISSN = {0025-5793},
   MRCLASS = {11P55 (11D75)},
  MRNUMBER = {1924493},
MRREVIEWER = {R.\ C.\ Baker},
       DOI = {10.1112/S0025579300015771},
       URL = {https://doi-org.ezproxy.lib.purdue.edu/10.1112/S0025579300015771},
}

@inproceedings {WooleyDHF,
    AUTHOR = {Wooley, T. D.},
     TITLE = {On {D}iophantine inequalities: {F}reeman's asymptotic
              formulae},
 BOOKTITLE = {Proceedings of the {S}ession in {A}nalytic {N}umber {T}heory
              and {D}iophantine {E}quations},
    SERIES = {Bonner Math. Schriften},
    VOLUME = {360},
     PAGES = {1--32},
 PUBLISHER = {Univ. Bonn, Bonn},
      YEAR = {2003},
   MRCLASS = {11D75 (11P55)},
  MRNUMBER = {2075639},
MRREVIEWER = {Scott\ T.\ Parsell},
}

@article {ArkhZhit,
    AUTHOR = {Arkhipov, G. I. and Zhitkov, A. N.},
     TITLE = {Waring's problem with nonintegral exponent},
   JOURNAL = {Izv. Akad. Nauk SSSR Ser. Mat.},
  FJOURNAL = {Izvestiya Akademii Nauk SSSR. Seriya Matematicheskaya},
    VOLUME = {48},
      YEAR = {1984},
    NUMBER = {6},
     PAGES = {1138--1150},
      ISSN = {0373-2436},
   MRCLASS = {11P05},
  MRNUMBER = {772109},
MRREVIEWER = {Ekkehard\ Kr\"atzel},
}

@article {Des,
    AUTHOR = {Deshouillers, J.-M.},
     TITLE = {Probl\`eme de {W}aring avec exposants non entiers},
   JOURNAL = {Bull. Soc. Math. France},
  FJOURNAL = {Bulletin de la Soci\'et\'e{} Math\'ematique de France},
    VOLUME = {101},
      YEAR = {1973},
     PAGES = {285--295},
      ISSN = {0037-9484},
   MRCLASS = {10J10},
  MRNUMBER = {342477},
MRREVIEWER = {B.\ Garrison},
       URL = {http://www.numdam.org/item?id=BSMF_1973__101__285_0},
}

@book {GraKol,
    AUTHOR = {Graham, S. W. and Kolesnik, G.},
     TITLE = {van der {C}orput's method of exponential sums},
    SERIES = {London Mathematical Society Lecture Note Series},
    VOLUME = {126},
 PUBLISHER = {Cambridge University Press, Cambridge},
      YEAR = {1991},
     PAGES = {vi+120},
      ISBN = {0-521-33927-8},
   MRCLASS = {11L07},
  MRNUMBER = {1145488},
MRREVIEWER = {Zun\ Shan},
       DOI = {10.1017/CBO9780511661976},
       URL = {https://doi-org.ezproxy.lib.purdue.edu/10.1017/CBO9780511661976},
}

@article {WooNEC,
    AUTHOR = {Wooley, T. D.},
     TITLE = {Nested efficient congruencing and relatives of {V}inogradov's
              mean value theorem},
   JOURNAL = {Proc. Lond. Math. Soc. (3)},
  FJOURNAL = {Proceedings of the London Mathematical Society. Third Series},
    VOLUME = {118},
      YEAR = {2019},
    NUMBER = {4},
     PAGES = {942--1016},
      ISSN = {0024-6115,1460-244X},
   MRCLASS = {11L15 (11L05 11P55)},
  MRNUMBER = {3938716},
MRREVIEWER = {Moubariz\ Z.\ Garaev},
       DOI = {10.1112/plms.12204},
       URL = {https://doi.org/10.1112/plms.12204},
}

@article {WooleyCube,
    AUTHOR = {Wooley, T. D.},
     TITLE = {The cubic case of the main conjecture in {V}inogradov's mean
              value theorem},
   JOURNAL = {Adv. Math.},
  FJOURNAL = {Advances in Mathematics},
    VOLUME = {294},
      YEAR = {2016},
     PAGES = {532--561},
      ISSN = {0001-8708,1090-2082},
   MRCLASS = {11D45 (11D25 11L07 11L15 11P55)},
  MRNUMBER = {3479572},
MRREVIEWER = {D.\ R.\ Heath-Brown},
       DOI = {10.1016/j.aim.2016.02.033},
       URL = {https://doi.org/10.1016/j.aim.2016.02.033},
}

@book {DavBook,
    AUTHOR = {Davenport, H.},
     TITLE = {Analytic methods for {D}iophantine equations and {D}iophantine
              inequalities},
    SERIES = {Cambridge Mathematical Library},
   EDITION = {Second},
      NOTE = {With a foreword by R. C. Vaughan, D. R. Heath-Brown and D. E.
              Freeman,
              Edited and prepared for publication by T. D. Browning},
 PUBLISHER = {Cambridge University Press, Cambridge},
      YEAR = {2005},
     PAGES = {xx+140},
      ISBN = {0-521-60583-0},
   MRCLASS = {11P05 (11D72 11P55)},
  MRNUMBER = {2152164},
       DOI = {10.1017/CBO9780511542893},
       URL = {https://doi.org/10.1017/CBO9780511542893},
}

@article {BDG,
    AUTHOR = {Bourgain, J. and Demeter, C. and Guth, L.},
     TITLE = {Proof of the main conjecture in {V}inogradov's mean value
              theorem for degrees higher than three},
   JOURNAL = {Ann. of Math. (2)},
  FJOURNAL = {Annals of Mathematics. Second Series},
    VOLUME = {184},
      YEAR = {2016},
    NUMBER = {2},
     PAGES = {633--682},
      ISSN = {0003-486X,1939-8980},
   MRCLASS = {11P05 (11N25)},
  MRNUMBER = {3548534},
MRREVIEWER = {Ben\ Joseph\ Green},
       DOI = {10.4007/annals.2016.184.2.7},
       URL = {https://doi-org.ezproxy.lib.purdue.edu/10.4007/annals.2016.184.2.7},
}

@book {Stein,
    AUTHOR = {Stein, E. M.},
     TITLE = {Harmonic analysis: real-variable methods, orthogonality, and
              oscillatory integrals},
    SERIES = {Princeton Mathematical Series},
    VOLUME = {43},
      NOTE = {With the assistance of Timothy S. Murphy,
              Monographs in Harmonic Analysis, III},
 PUBLISHER = {Princeton University Press, Princeton, NJ},
      YEAR = {1993},
     PAGES = {xiv+695},
      ISBN = {0-691-03216-5},
   MRCLASS = {42-02 (35Sxx 43-02 47G30)},
  MRNUMBER = {1232192},
MRREVIEWER = {Michael\ Cowling},
}

@phdthesis{Poulthesis,
  title        = {On {D}iophantine problems involving fractional powers of integers},
  author       = {Poulias, K.},
  year         = 2021,
  month        = {},
  note         = {Available at \url{https://research-information.bris.ac.uk/en/studentTheses/on-diophantine-problems-involving-fractional-powers-of-integers/}},
  school       = {University of Bristol},
  type         = {PhD thesis}
}

@article {WoolThin,
    AUTHOR = {Wooley, T. D.},
     TITLE = {On {V}u's thin basis theorem in {W}aring's problem},
   JOURNAL = {Duke Math. J.},
  FJOURNAL = {Duke Mathematical Journal},
    VOLUME = {120},
      YEAR = {2003},
    NUMBER = {1},
     PAGES = {1--34},
      ISSN = {0012-7094,1547-7398},
   MRCLASS = {11P05 (05D40 11P55)},
  MRNUMBER = {2010732},
MRREVIEWER = {Koichi\ Kawada},
       DOI = {10.1215/S0012-7094-03-12011-6},
       URL = {https://doi.org/10.1215/S0012-7094-03-12011-6},
}

@article {VuRef,
    AUTHOR = {Vu, V. H.},
     TITLE = {On a refinement of {W}aring's problem},
   JOURNAL = {Duke Math. J.},
  FJOURNAL = {Duke Mathematical Journal},
    VOLUME = {105},
      YEAR = {2000},
    NUMBER = {1},
     PAGES = {107--134},
      ISSN = {0012-7094,1547-7398},
   MRCLASS = {11P05 (05D40 11P55)},
  MRNUMBER = {1788048},
MRREVIEWER = {Mihail\ N.\ Kolountzakis},
       DOI = {10.1215/S0012-7094-00-10516-9},
       URL = {https://doi.org/10.1215/S0012-7094-00-10516-9},
}

@incollection {VWSurvey,
    AUTHOR = {Vaughan, R. C. and Wooley, T. D.},
     TITLE = {Waring's problem: a survey},
 BOOKTITLE = {Number theory for the millennium, {III} ({U}rbana, {IL},
              2000)},
     PAGES = {301--340},
 PUBLISHER = {A K Peters, Natick, MA},
      YEAR = {2002},
      ISBN = {1-56881-152-7},
   MRCLASS = {11P05 (11P55)},
  MRNUMBER = {1956283},
MRREVIEWER = {Scott\ T.\ Parsell},
       DOI = {10.1198/004017002320256684},
       URL = {https://doi.org/10.1198/004017002320256684},
}

@misc{pliegoVu,
      title={On {V}u's theorem in {W}aring's problem for thinner sequences}, 
      author={Pliego, J.},
      year={2025},
      eprint={2410.11832},
      archivePrefix={arXiv},
      primaryClass={math.NT},
      note={Available at {\tt arXiv:2410.11832}}, 
}

@article {Zollpaper,
    AUTHOR = {Z\"ollner, J.},
     TITLE = {\"{U}ber eine {V}ermutung von {C}hoi, {E}rd{\H o}s und {N}athanson},
   JOURNAL = {Acta Arith.},
  FJOURNAL = {Polska Akademia Nauk. Instytut Matematyczny. Acta Arithmetica},
    VOLUME = {45},
      YEAR = {1985},
    NUMBER = {3},
     PAGES = {211--213},
      ISSN = {0065-1036},
   MRCLASS = {11B34},
  MRNUMBER = {808021},
MRREVIEWER = {H.-J.\ Kanold},
       DOI = {10.4064/aa-45-3-211-213},
       URL = {https://doi.org/10.4064/aa-45-3-211-213},
}

@phdthesis{Zollthesis,
  title        = {Der-Vier-Quadrate-Satz und ein Problem von Erd\H{o}s und Nathanson},
  author       = {Z\"{o}llner, J.},
  year         = 1984,
  month        = {},
  note         = {Available at \url{hhttps://d-nb.info/850871921/04}},
  school       = {Johannes-Guttenberg-Universit\"{a}t},
  type         = {PhD thesis}
}

@article {Wirsing,
    AUTHOR = {Wirsing, E.},
     TITLE = {Thin subbases},
   JOURNAL = {Analysis},
  FJOURNAL = {Analysis. International Journal of Analysis and its
              Application},
    VOLUME = {6},
      YEAR = {1986},
    NUMBER = {2-3},
     PAGES = {285--308},
      ISSN = {0174-4747},
   MRCLASS = {11B13 (11B34)},
  MRNUMBER = {832752},
MRREVIEWER = {D.\ Wolke},
       DOI = {10.1524/anly.1986.6.23.285},
       URL = {https://doi.org/10.1524/anly.1986.6.23.285},
}

@incollection {Nat,
    AUTHOR = {Nathanson, M. B.},
     TITLE = {Waring's problem for sets of density zero},
 BOOKTITLE = {Analytic number theory ({P}hiladelphia, {P}a., 1980)},
    SERIES = {Lecture Notes in Math.},
    VOLUME = {899},
     PAGES = {301--310},
 PUBLISHER = {Springer, Berlin-New York},
      YEAR = {1981},
      ISBN = {3-540-11173-5},
   MRCLASS = {10J06 (10K99)},
  MRNUMBER = {654535},
MRREVIEWER = {J.\ B.\ Kelly},
}

@article {ErdTet,
    AUTHOR = {Erd\H{o}s, P. and Tetali, P.},
     TITLE = {Representations of integers as the sum of {$k$} terms},
   JOURNAL = {Random Structures Algorithms},
  FJOURNAL = {Random Structures \& Algorithms},
    VOLUME = {1},
      YEAR = {1990},
    NUMBER = {3},
     PAGES = {245--261},
      ISSN = {1042-9832,1098-2418},
   MRCLASS = {11B34 (05D05)},
  MRNUMBER = {1099791},
MRREVIEWER = {J.\ Spencer},
       DOI = {10.1002/rsa.3240010302},
       URL = {https://doi.org/10.1002/rsa.3240010302},
}

@article {JanRuc,
    AUTHOR = {Janson, S. and Ruci\'nski, A.},
     TITLE = {The deletion method for upper tail estimates},
   JOURNAL = {Combinatorica},
  FJOURNAL = {Combinatorica. An International Journal on Combinatorics and
              the Theory of Computing},
    VOLUME = {24},
      YEAR = {2004},
    NUMBER = {4},
     PAGES = {615--640},
      ISSN = {0209-9683,1439-6912},
   MRCLASS = {60C05 (05C80 60F05)},
  MRNUMBER = {2096818},
MRREVIEWER = {Dudley\ Stark},
       DOI = {10.1007/s00493-004-0038-3},
       URL = {https://doi.org/10.1007/s00493-004-0038-3},
}

@article {VuConc,
    AUTHOR = {Vu, V. H.},
     TITLE = {On the concentration of multivariate polynomials with small
              expectation},
   JOURNAL = {Random Structures Algorithms},
  FJOURNAL = {Random Structures \& Algorithms},
    VOLUME = {16},
      YEAR = {2000},
    NUMBER = {4},
     PAGES = {344--363},
      ISSN = {1042-9832,1098-2418},
   MRCLASS = {60F10 (05C80 60C05 60E15)},
  MRNUMBER = {1761580},
       DOI = {10.1002/1098-2418(200007)16:4<344::AID-RSA4>3.0.CO;2-5},
       URL =
              {https://doi.org/10.1002/1098-2418(200007)16:4<344::AID-RSA4>3.0.CO;2-5},
}

@article {BW,
    AUTHOR = {Br\"udern, J. and Wooley, T. D.},
     TITLE = {On {W}aring's problem for larger powers},
   JOURNAL = {J. Reine Angew. Math.},
  FJOURNAL = {Journal f\"ur die Reine und Angewandte Mathematik. [Crelle's
              Journal]},
    VOLUME = {805},
      YEAR = {2023},
     PAGES = {115--142},
      ISSN = {0075-4102,1435-5345},
   MRCLASS = {11P05 (11P55)},
  MRNUMBER = {4669037},
MRREVIEWER = {Kirsti\ D.\ Biggs},
       DOI = {10.1515/crelle-2023-0072},
       URL = {https://doi-org.ezproxy.lib.purdue.edu/10.1515/crelle-2023-0072},
}

@misc{taf26,
      title={Thin subbases of {P}iatetski-{S}hapiro sequences}, 
      author={Táfula, C.},
      year={2026},
      eprint={2605.04411},
      archivePrefix={arXiv},
      primaryClass={math.NT},
      note = {Available at {\tt arXiv:2605.04411}}, 
}

@misc{tafbox,
      title={On {V}u's restricted box estimate in {W}aring's problem}, 
      author={Táfula, C.},
      year={2026},
      eprint={2605.15067},
      archivePrefix={arXiv},
      primaryClass={math.NT},
      note = {Available at {\tt arXiv:2605.15067}}, 
}

\noindent\textsc{Department of Mathematics, Purdue University, West Lafayette, IN, USA.}
\vspace{.03in}
\newline\noindent\textit{Email address}: bharga37@purdue.edu.
\end{document}